\renewcommand{\epsilon}{\varepsilon}
\definecolor{GGray}{gray}{0.8}
\newcommand{\N}{{\mathbb N}}
\newcommand{\Z}{{\mathbb Z}}
\newcommand{\R}{{\mathbb R}}
\newcommand{\C}{{\mathbb C}}
\newcommand{\eff}{\operatorname{eff}}
\newcommand{\Vol}{\operatorname{Vol}}
\newcommand{\card}{\operatorname{card}}
\newcommand{\SL}{\operatorname{SL}(2,\mathbb{R})}
\newcommand{\cC}{{\mathcal C}}
\newcommand{\cT}{{\mathcal T}}
\newcommand{\cH}{{\mathcal H}}
\newcommand{\cQ}{{\mathcal Q}}
\newcommand{\dd}{\mathrm{d}}
\renewcommand{\epsilon}{\varepsilon}
\newtheorem{theorem}{Theorem}
\newtheorem{lemma}{Lemma}
\newtheorem{corollary}{Corollary}
\newtheorem{proposition}{Proposition}
\theoremstyle{remark}
\newtheorem{remark}{Remark}
\newtheorem{example}{Example}
\newtheorem{conv}{Convention}
\begin{document}

\date{}

\title[Siegel--Veech constants]{Siegel--Veech constants for strata of moduli spaces of quadratic differentials}

\author[E. Goujard]{Elise Goujard}

\begin{abstract}We present an explicit formula relating volumes of strata of meromorphic quadratic differentials with at most simple poles on Riemann surfaces and counting functions of the number of flat cylinders filled by closed geodesics in associated  flat metric with singularities. This generalizes the result of Athreya, Eskin and Zorich in genus 0 to higher genera.\end{abstract}

\maketitle

\section{Introduction}

\subsection{Cylinders and saddle connections on half-translation surfaces}

A meromorphic quadratic differential $q$ with at most simple poles on a Riemann surface $S$ of genus $g$ defines a flat metric on $S$ with conical singularities. If $q$ is \emph{not} the global square of a holomorphic 1-form on $S$, the metric has a non-trivial linear holonomy group, and in this case $(S,q)$ is called a {\it half-translation} surface. In this paper we consider only quadratic differentials satisfying the previous condition. If $\alpha=\{\alpha_1, \dots, \alpha_n\}\subset\{-1\}\cup\N$ is a partition of $4g-4$, $\cQ(\alpha)$ denotes the moduli space of pairs $(S,q)$ as above, where $q$ has exactly $n$ singularities of orders given by $\alpha$. It is a {\it stratum} in the moduli space $\cQ_g$ of pairs $(S,q)$ with no additional constraints on $q$.

In what follows we will refer to a half-translation surface $(S,q)$ simply as $S$. 

A {\it saddle connection} on $S$ is a geodesic segment on $S$ joining a pair of conical singularities or a singularity to itself without any singularities in its interior. Note that maximal flat cylinders filled by parallel regular closed geodesics have their boundaries composed by one or several parallel saddle connections. In this paper we will evaluate the number of such cylinders on $S$ in terms of the volumes of some strata, using the study of saddle connections by Masur and Zorich in \cite{MZ}.

\subsection{Rigid collections of saddle connections}
A saddle connection persists under any small deformation of $S$ inside the stratum $\cQ(\alpha)$. Moreover Masur and Zorich noticed in \cite{MZ} that in some cases any small deformation which shortens a specific saddle connection shortens also some other saddle connections.  More precisely, they give the following result (Proposition 1 of \cite{MZ}):
\begin{proposition}[Masur-Zorich]
Let $\{\gamma_1, \dots, \gamma_m\}$ be a collection of saddle connections on a half-translation surface $S$. Then any sufficiently small deformation of $S$ inside the stratum preserves the proportions $\vert\gamma_1\vert:\vert\gamma_2\vert:\dots:\vert\gamma_m\vert$ of the lengths of the saddle connections if and only if the saddle connections are \^homologous.
\end{proposition}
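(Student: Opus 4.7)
The natural approach is to pass to the canonical orienting double cover $\pi : \hat{S} \to S$ on which $\pi^{*}q = \omega^{2}$ for a holomorphic 1-form $\omega$, and to work in period coordinates. Let $\sigma$ be the deck involution, so that $\sigma^{*}\omega = -\omega$, and let $\hat{\Sigma} = \pi^{-1}(\Sigma)$ be the preimage of the set $\Sigma$ of conical singularities of $q$. The standard fact to invoke is that local period coordinates on the stratum $\cQ(\alpha)$ near $S$ are provided by the anti-invariant part $H^{1}_{-}(\hat{S}, \hat{\Sigma}; \C)$ of the relative cohomology: every sufficiently small $\eta$ in this space corresponds to a unique nearby deformation of $S$ inside $\cQ(\alpha)$, and conversely every small deformation arises this way. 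Each saddle connection $\gamma_{j}$ on $S$ admits a lift $\hat{\gamma}_{j}$ to $\hat{S}$ (after choosing one of the two components of $\pi^{-1}(\gamma_{j})$), and the length satisfies $|\gamma_{j}| = \bigl|\int_{\hat{\gamma}_{j}} \omega\bigr|$.

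For the easy ``if'' direction, suppose the anti-invariant projections of the classes $[\hat{\gamma}_{j}]$ are pairwise proportional in $H_{1}^{-}(\hat{S}, \hat{\Sigma}; \Q)$, say $[\hat{\gamma}_{j}] = \lambda_{j}[\hat{\gamma}_{1}]$ with fixed rationals $\lambda_{j}$: then $\int_{\hat{\gamma}_{j}} \eta = \lambda_{j} \int_{\hat{\gamma}_{1}} \eta$ for every $\eta$, so $|\gamma_{j}| = |\lambda_{j}|\cdot|\gamma_{1}|$ for every nearby surface and the ratios are preserved. For the converse, set $f_{j}(\eta) := \int_{\hat{\gamma}_{j}} \eta$, a $\C$-linear functional on $H^{1}_{-}(\hat{S}, \hat{\Sigma}; \C)$. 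The hypothesis says $|f_{j}|/|f_{1}|$ is locally constant at $\eta = \omega$. Since $f_{j}/f_{1}$ is a meromorphic function on the vector space and its modulus is locally constant on an open set avoiding the zeros of $f_{1}$, the open mapping theorem forces it to be a global constant $\lambda_{j} \in \C$; dually this means $[\hat{\gamma}_{j}] = \lambda_{j}[\hat{\gamma}_{1}]$ in $H_{1}^{-}(\hat{S}, \hat{\Sigma}; \C)$, and the integrality of both sides forces $\lambda_{j} \in \Q$. This is exactly the Masur--Zorich condition of being \^{}homologous.

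The main obstacle I foresee is the careful identification of the tangent space of $\cQ(\alpha)$ with $H^{1}_{-}(\hat{S}, \hat{\Sigma}; \C)$, together with tracking the sign conventions that stem from the two-fold choice of lift of each $\gamma_{j}$ and the fact that only the anti-invariant projection of $[\hat{\gamma}_{j}]$ is detected by the periods of $\omega$. Once these foundational facts are in place, the heart of the argument is the elementary observation that two non-zero $\C$-linear functionals whose modulus-ratio is locally constant must differ by a complex scalar multiple, which is then promoted to rational proportionality by the integrality of the underlying homology classes.
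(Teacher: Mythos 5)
Note first that the paper does not prove this proposition: it is quoted as Proposition 1 of \cite{MZ}, so there is no internal proof to compare with. Your period-coordinate strategy is the natural one and is indeed the framework of Masur--Zorich's argument. The ``if'' direction and the reduction of the converse to linear algebra are essentially sound, modulo the bookkeeping you yourself flag: when $\gamma_j$ ends at poles, the endpoints of its lifts are regular points not belonging to $\hat\Sigma$, so the functional must be understood as the pairing with the anti-invariant cycle $[\gamma_j']-[\gamma_j'']$ (whose period is twice the holonomy), and when $[\gamma_j]=0$ downstairs one uses $[\gamma_j']$ instead; these constant factors of $1$ or $2$ are harmless along the deformation, and your open-mapping argument (restricted to complex lines through the base point) correctly yields that the corresponding linear functionals are proportional.

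The genuine gap is at the very last step. What your argument delivers is $[\hat\gamma_j]=\lambda_j[\hat\gamma_1]$ in $H_1^-(\hat S,\hat\Sigma;\Z)\otimes\Q$ with $\lambda_j\in\Q^*$. Being \^homologous, as recalled in \S~\ref{ssect:hom}, means $[\hat\gamma_j]=\pm[\hat\gamma_1]$, i.e.\ equality of the classes up to orientation, not merely rational proportionality, and the two conditions are not tautologically equivalent. Nor can you separate them by looking at lengths: genuinely \^homologous saddle connections already realize the length ratio $2$ (depending on whether the projection is homologous to zero downstairs), so a hypothetical pair with, say, $[\hat\gamma_2]=2[\hat\gamma_1]$ would be indistinguishable from an honest \^homologous pair at the level of periods, yet would not satisfy the definition. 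Excluding proportionality constants other than $\pm 1$ --- for instance by proving that the class $[\hat\gamma]$ of \emph{every} saddle connection (including closed ones and those ending at poles) is primitive in $H_1^-(\hat S,\hat\Sigma;\Z)$, or by a direct geometric argument as in \cite{MZ} --- is precisely the nontrivial content that your write-up skips when it asserts that rational proportionality ``is exactly the Masur--Zorich condition of being \^homologous''. As written, your proof establishes a strictly weaker statement.
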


 Roughly two saddle connections are \^homologous if they define the same anti-invariant cycle in the orientation double cover. The precise definition will be recalled in \S~\ref{ssect:hom}.
In particular two \^homologous saddle connections are parallel with ratios of lengths equal to $1$ or $2$.

The geometric types of possible maximal collections of \^homologous saddle connections $\gamma=\{\gamma_1,\dots, \gamma_m\}$ on $S$ are called {\it configurations} of saddle connections. Masur and Zorich classified 
all configurations of saddle connections in \cite{MZ} in terms of combinatorial data.

We assume in the sequel that $S$ belongs to a connected stratum (unless the non connectedness is stated explicitly), and we will not make a distinction when we speak about configurations for the surface $S$ or for the stratum $\cQ(\alpha)$, the second means that we look at all possible configurations on almost every surface $S\in\cQ(\alpha)$.

We are interested in collections of \^homologous saddle connections, such that some of the saddle connections bound at least one cylinder filled by parallel regular closed geodesics. We refer to the geometric type of these collections as ``configurations containing cylinders'' or ``configurations with cylinders''.

It is proved in \cite{MZ} that such cylinders have in fact each of their two boundaries composed by exactly one or two saddle connections in the collection, and that if there are several cylinders in the configuration, the lengths of their waist curves are either the same or have the ratio 1:2. Namely, some cylinders have their width twice larger than the width of the other cylinders. The boundary of the first cylinders are composed either by one or two saddle connections, and the boundary of the latter cylinders are composed by exactly one saddle connection. We will refer to cylinders of the first type as ``thick cylinders'' and to cylinders of the second type as ``thin cylinders''. We call the length of the minimal saddle connection in the collection or equivalently the width of any thin cylinder the ``length of the configuration''.

Let $\gamma$ be a maximal collection of \^homologous saddle connections on $S$. Then the complimentary region of these saddle connections and the cylinders bounded by these saddle connections is the union of some surfaces with boundaries. Each of them might be obtained by a specific surgery from a flat surface belonging to a stratum $\cQ(\alpha_i)$ or $\cH(\beta_j)$. The union of these strata $\cQ(\alpha')=\cup_{i,j}\cQ(\alpha_i)\cup\cH(\beta_j)$ is called the {\em boundary stratum} for the configuration $\cC$. This distinction is meaningful: the boundary stratum corresponds to the degeneration of the stratum $ \cQ(\alpha) $ as the lengths of the saddle connections in the collection tend to 0. 

\subsection{Counting saddle connections}

Let $S$ be a half-translation surface in a connected stratum $\cQ(\alpha)$, and $\cC$ a configuration with cylinders on $S$. It means that in some given direction, there is a collection of \^homologous saddle connections of type $\cC$ on $S$. Note that by results of \cite{EM} in many other directions, one can usually find another collection of \^homologous saddle connections of same type $\cC$.

 We introduce $N(S, \cC, L)$ the number of directions on $S$ in which we can find a collection of saddle connections of type $\cC$, with the length of the smallest saddle connection smaller than $L$.
Since we are interested in cylinders we introduce also $N_{cyl}(S, \cC, L)$ that counts each appearance of the configuration $\cC$ with weight equal to the number of the cylinders of width smaller than $ L $, and $N_{area}(S, \cC, L)$ that counts each appearance of the configuration $\cC$ with weight equal to the area of the cylinders of width smaller than $ L $.

For each of these numbers, we introduce the corresponding Siegel--Veech constant, that gives the asymptotic of these numbers as $L$ goes to infinity:
\begin{equation}\label{eq:defSV} c_{*}(\cC)=\lim\limits_{L\to\infty}\cfrac{N_{*}(S, \cC, L)\cdot(\mbox{Area of }S)}{\pi L^2}.\end{equation}
The stratum $\cQ(\alpha)$ is equipped with a natural $PSL(2, \R)$-invariant measure induced by the Lebesgue measure in period coordinates.
Eskin and Masur showed in \cite{EM} that the numbers \eqref{eq:defSV} are in fact constant on a full measure set of the stratum $\cQ(\alpha)$. Combining these results with the results of Veech \cite{Ve}, one concludes that all these constants are strictly positive.

\subsection{Application of Siegel--Veech constants}
One of the principal reasons why the Siegel--Veech constants are more and more intensively studied during the last years \cite{AEZ,Bainbridge1,Bainbridge2,BG,EKZ,Vo} is the relation between them and the Lyapunov exponents of the Hodge bundle along the Teichm\"uller flow: the key formula of~\cite{EKZ} expresses the sum of the positive Lyapunov exponents for any stratum $\cQ(\alpha)$ as a sum of a very explicit rational function in $\alpha$ and the Siegel--Veech constant $c_{area}(\cQ(\alpha))$. The Lyapunov exponents are closely related to the deviation spectrum of measured foliations on individual flat surfaces \cite{Forni:Deviation,Forni:Handbook,Zorich:Asymptotic:flag,Zorich:How:do}, which opens applications to billiards in polygons, interval exchanges, etc.

A recent breakthrough of A.~Eskin and M.~Mirzakhani provides, in particular, new tools allowing to prove that the $\SL$-orbit closure of certain individual flat surfaces is an entire stratum. By the theorem of J.~Chaika and A.~Eskin~\cite{Chaika:Eskin}, almost all directions for such a flat surface are Lyapunov-generic. This allows to use all the technology mentioned above to compute, for example, the diffusion rate of billiards with certain periodic obstacles. The final explicit answer (as $2/3$ for the diffusion rate in the windtree model studied in~\cite{Delecroix:Hubert:Lelievre}) is a certain Lyapunov exponent as above. These kinds of quantitative answers or estimates are often reduced to computation of the appropriate Siegel--Veech constants.

The Kontsevich formula \cite{Kontsevich} for the sum of the Lyapunov exponents over a Teichm\"uller curve  and recent results of S.~Filip~\cite{Filip} showing that every orbit closure is a quasiprojective variety suggest that an adequate intersection theory of the strata might provide algebro-geometric tools to evaluate Siegel--Veech constants (see also~\cite{Korotkin:Zograf} in this connection). However, such intersection theory is not developed yet, and we are limited to analytic tools in our evaluation of Siegel--Veech constants.

\subsection{Principal results}

Now we are ready to state the main theorem of this paper.
\begin{theorem}\label{th:ccyl} Let $\cC$ be an admissible configuration for a connected stratum $\cQ(\alpha)$ of quadratic differentials. Let $q_1$ denote the number of thin cylinders, $q_2$ the number of thick cylinders in the configuration $\cC$, and $q=q_1+q_2$ the total number of cylinders. Assume that the boundary stratum $\cQ(\alpha')$ is non empty, and $q\geq 1$. Then the Siegel--Veech constants associated to $\cC$ are the following:
\begin{eqnarray}c(\cC) &= &\frac{M}{2^{q+2}}\frac{(\dim_{\C} \cQ(\alpha')-1)!}{(\dim_{\C} \cQ(\alpha)-2)!}\frac{\Vol \cQ_1(\alpha')}{\Vol \cQ_1(\alpha)}\label{eq:cSV}\\
c_{cyl}(\cC)& =&\left(q_1+\frac{1}{4}q_2\right)c(\cC)\label{eq:ccylc}\\
c_{area}(\cC)&=&\frac{1}{\dim_{\C} \cQ(\alpha)-1}c_{cyl}(\cC)\label{eq:careaandc}\end{eqnarray}
where $M=\cfrac{M_sM_c}{M_t}$ and $M_c$, $M_t$, $M_s$ are combinatorial constants depending only on the configuration $\cC$, explicitly given by equations (\ref{eq:M_c}), (\ref{eq:M_t}) and (\ref{eq:M_s}).
\end{theorem}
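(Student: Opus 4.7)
My plan is to apply the Siegel--Veech machinery of Eskin--Masur \cite{EM}, which identifies $c(\cC)$ with the normalized asymptotic Lebesgue measure of the set of surfaces in $\cQ_1(\alpha)$ carrying a $\cC$-configuration of length at most $\epsilon$:
\[
c(\cC) = \lim_{\epsilon\to 0} \frac{1}{\pi\epsilon^2}\cdot\frac{\Vol\bigl(\{S\in\cQ_1(\alpha)\,:\,\exists\text{ a }\cC\text{-configuration of length }\leq\epsilon\}\bigr)}{\Vol\cQ_1(\alpha)}.
\]
The whole problem then reduces to an explicit measure computation of this thin part in period coordinates, generalizing the genus $0$ analysis of Athreya--Eskin--Zorich \cite{AEZ}.

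First I would construct, following the surgery description of \cite{MZ}, an explicit parametrization of a neighbourhood of the configuration locus. A surface $S$ carrying a configuration of type $\cC$ is reconstructed from: a surface $S'$ in the boundary stratum $\cQ(\alpha')$; a complex holonomy $v\in\R^2$ of modulus $\ell\leq\epsilon$ giving the direction and length of the configuration; for each cylinder a pair of real parameters (height and twist) encoding its modulus and shear; and a combinatorial choice for the identifications of the boundary of $S'$ with the cylinders and saddle connections. Because \^homology forces every saddle connection in the collection to be collinear with $v$, the surgery adds exactly $q+1$ complex parameters, so $\dim_{\C}\cQ(\alpha)=\dim_{\C}\cQ(\alpha')+q+1$. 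Passing to the area-$1$ hyperboloid by the standard cone formula produces both the factorial ratio $(\dim_{\C}\cQ(\alpha')-1)!/(\dim_{\C}\cQ(\alpha)-2)!$ and the volume ratio $\Vol\cQ_1(\alpha')/\Vol\cQ_1(\alpha)$ in \eqref{eq:cSV}.

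Next I would read off the three combinatorial constants. The factor $1/M_s$ compensates for the automorphisms of the ribbon graph that realize the same configuration on the same surface; $M_c$ enumerates the ways to glue the cylinders to the boundary components of the degenerate surface $S'$; and $M_t$ accounts for the stabilizer of the decoration of that boundary. The power $2^{q+2}$ comes partly from integrating the twist of each cylinder modulo its waist ($2^q$) and partly from a $2^2$ coming from the passage through the orientation double cover together with the distinction between thin cylinders of waist $\ell$ and thick cylinders of waist $2\ell$. Fubini on the cylinder moduli then assembles everything into \eqref{eq:cSV}.

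The passage to $c_{cyl}$ and $c_{area}$ is comparatively soft. For $c_{cyl}$, a thin cylinder has waist $\ell\leq L$ precisely when the configuration has length $\leq L$, while a thick cylinder of waist $2\ell$ is counted with asymptotic probability $(L/2)^2/L^2=1/4$, producing the weighting $q_1+q_2/4$ of \eqref{eq:ccylc}. For $c_{area}$, weighting each cylinder by its area amounts to integrating one extra height parameter against the area-$1$ constraint; a standard cone computation on $\cQ(\alpha)\to\cQ_1(\alpha)$ yields the factor $1/(\dim_{\C}\cQ(\alpha)-1)$ of \eqref{eq:careaandc}. The main obstacle I foresee is the careful bookkeeping of $M_s$, $M_c$, $M_t$ and of the power of $2$ in the simultaneous presence of thick and thin cylinders: this is the technical core of the argument and the place where naive computations typically err by small integer factors.
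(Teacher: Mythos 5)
Your strategy is the same as the paper's: reduce via Eskin--Masur/Veech to the measure of the $\varepsilon$-thin locus, parametrize it by a boundary surface in $\cQ(\alpha')$ together with the holonomy $v$ and the heights and twists of the $q$ cylinders, use $\dim_\C\cQ(\alpha)=\dim_\C\cQ(\alpha')+q+1$ and a cone (beta-integral) computation to produce the factorial and volume ratios, and obtain \eqref{eq:ccylc} from the $L/2$ threshold for thick cylinders, exactly as in the paper's remark after \eqref{eq:Ncyl}. The gap lies in the quantitative bookkeeping, which is precisely what the theorem asserts. First, your starting identity is too big by a factor $2$: on $\cQ_1(\alpha)$ surfaces have area $1/2$ (Convention \ref{convarea}), so the correct prefactor is $\frac{1}{2\pi\varepsilon^2}$ as in \eqref{eq:SVbase}, not $\frac{1}{\pi\varepsilon^2}$. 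Second, you misidentify the constants whose product is $M$. In the paper, $M_c=4^{n(q)}$ ($=4^q$ or $4^{q+1}$, see \eqref{eq:M_c}) is a normalization factor forced by Convention \ref{convreseau}: each of the cycles $\gamma,\delta_1,\dots,\delta_q$ that is not homologous to zero has half-integer holonomy with respect to the lattice $(H^-_1(\hat S,\hat\Sigma;\Z))^{*}_{\C}$, contributing a factor $4$ to $\dd\nu_T$; it is not a count of gluings. The count of gluings is $M_s=K/|\Gamma(\cC)|$ of \eqref{eq:M_s} (choices of geodesic rays along which the surgeries are performed, divided by the symmetries of the configuration), and it enters the numerator of $M$, not as a reciprocal compensating automorphisms; and $M_t=\prod o_{t_i}$ of \eqref{eq:M_t} corrects the twist range $\left[0,2|v|/o_{t_i}\right)$ of thick cylinders whose boundary admits a symmetry. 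Third, your account of the power $2^{q+2}$ would not survive the computation: the twist integration yields $2^{q_2}w^{q}/M_t$, not $2^{q}$, and the remaining powers of two come from the area-$\tfrac12$ normalization, the substitution $h'=2h$ for thick cylinders, and Lemma \ref{lem:J}. Since the theorem is exactly the statement that these factors assemble into $M/2^{q+2}$, flagging them as the place where naive computations err does not close the argument.

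Similarly, \eqref{eq:careaandc} is not soft. The area weight counts only cylinders of width at most $\varepsilon$, so the domain of integration splits into $w\le\varepsilon/2$ (all cylinders weighted by their area) and $\varepsilon/2\le w\le\varepsilon$ (only the thin ones), and one uses the symmetric roles of the heights to replace the weight $\bigl(\sum_{i\le q_1}h_i\bigr)/h'$ by $q_1/q$; the outcome is $c_{area}(\cC)=\frac{4q_1+q_2}{4}\cdot\frac{1}{\dim_\C\cQ(\alpha)-1}\,c(\cC)$, and only the comparison with \eqref{eq:ccylc} gives the stated ratio $1/(\dim_\C\cQ(\alpha)-1)$. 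Your one-line ``integrate one extra height parameter'' assumes, rather than proves, that the area weighting is compatible with the thin/thick counting underlying $c_{cyl}$.
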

When the boundary stratum is empty, the formulas are simpler and given in \S~\ref{sssection:specialcase}.

This theorem is proved in section \ref{ssection:computation}. Note that these formulas coincide in genus 0 with the formulas of \cite{AEZ}, for the two configurations containing cylinders (named ``pocket'' and ``dumbell'' in the article).

The ratio $\cfrac{c_{area}(\cC)}{c_{cyl}(\cC)}=\cfrac{1}{\dim_\C\cQ(\alpha)-1}$ can be interpreted as the mean area of a cylinder in the configuration $\cC$. Note that it depends only on the dimension of the ambient stratum.

For a fixed stratum $\cQ(\alpha)$ consider all admissible configurations, and denote $q_{max}(\alpha)$ the maximal number of cylinders for all these configurations. We evaluate this number in section \ref{ssect:maxcyl}. The ratio $\cfrac{q_{max}(\alpha)}{\dim_{\C}(\cQ(\alpha))-1}$ represents the maximum mean total area of the cylinders in stratum $\cQ(\alpha)$. 
\begin{proposition}
\label{prop:qmax:dim}
We have
$$
\begin{array}{cc}\underset{\alpha\in\Pi(4g-4+k)}{\max}\ \cfrac{ q_{\max}(\alpha\cup\{-1^k\})} {2g-3+\ell(\alpha)+k}  & \xrightarrow[g\to\infty]{k \,\textrm{ fixed}} \cfrac{1}{3}\\ & \xrightarrow[k\to\infty]{g \,\textrm{ fixed}} \cfrac{1}{5}\end{array}
$$
where $\Pi (4g-4+k)$ denotes the set of partitions of $4g-4+k$ and $l(\alpha)$ is the length of the partition $\alpha$.
Furthermore for any genus $g$ and number of poles $k$ the bound is achieved for $\alpha\in\Pi (k')\sqcup \Pi_4(4g-4+k-k')$, where $k'=k-4\left\lfloor\cfrac{k}{4}\right\rfloor$ and $\Pi_{4}(4g-4+k-k')$ denote the set of partitions of $4g-4+k-k'$ using only $4$'s. 
\end{proposition}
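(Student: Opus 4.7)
The plan is to first bound $q(\cC)$ for a general admissible configuration $\cC$ on $\cQ(\alpha\cup\{-1^k\})$, then optimise over partitions $\alpha$ of fixed weight $4g-4+k$, and finally evaluate the two limits. I would start from the Masur--Zorich classification of configurations and translate the structure of $\cC$ into prong data at the singularities: a zero of order $\alpha_i\geq 0$ carries $\alpha_i+2$ prongs, a pole carries exactly one, and each saddle connection in $\cC$ occupies one prong at each of its endpoints. Each cylinder in $\cC$ has two boundary components built of $1$ or $2$ saddle connections, hence consumes between $4$ and $8$ prongs; the \^homologous relation further forces the cylinders to be linked in a chain whose interior vertices are singularities shared between consecutive cylinder boundaries. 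A careful bookkeeping then yields an inequality of the form $q(\cC)\leq \tfrac{1}{2}\sum_{\alpha_i\geq 2}(\alpha_i-2)+\mathrm{correction}(g,k,\ell(\alpha))$.

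To identify the extremal $\alpha$, I would rewrite this bound in terms of the quantity $2g-3+\ell(\alpha)+k$. Among all distributions of the fixed total order $\sum\alpha_i=4g-4+k$, splitting a zero of order $\geq 6$ into pieces of order $4$ strictly increases the upper bound on $q$ while also increasing $\ell(\alpha)$ by one; this trade-off is maximally favourable exactly at order $4$, so the optimal partition has the form $\alpha\in\Pi(k')\sqcup\Pi_4(4g-4+k-k')$ with $k'=k-4\lfloor k/4\rfloor$. Sharpness is then checked by an explicit ``beads on a string'' construction: chain the order-$4$ zeros together using one cylinder per zero, and close off the chain using the residual poles partitioned according to $\Pi(k')$. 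This yields a configuration with exactly $(4g-4+k-k')/4+O(1)$ cylinders.

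With this extremal $\alpha$ one has $\ell(\alpha)=(4g-4+k-k')/4+O(1)$, and a direct computation gives
\[
\frac{q_{\max}(\alpha\cup\{-1^k\})}{2g-3+\ell(\alpha)+k}\sim\frac{g+k/4}{3g+5k/4},
\]
which tends to $1/3$ as $g\to\infty$ with $k$ fixed, and to $1/5$ as $k\to\infty$ with $g$ fixed. The main obstacle, in my view, is the first step: the naive prong inequality ignores the parallelism constraint on \^homologous saddle connections, so one needs to refine it to reflect the allowed ways a chain of cylinders can traverse a single singularity. The refined bound should be tight precisely when every intermediate vertex of the chain is an order-$4$ zero, which is both the source of the denominators $3$ and $5$ and the reason why partitions concentrated on order-$4$ zeros are extremal.
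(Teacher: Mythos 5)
Your overall strategy (bound $q$ by the singularity data, argue that order-$4$ zeros are extremal, then compute the two limits) parallels the paper, and your final asymptotics $\frac{g+k/4}{3g+5k/4}$, giving $1/3$ and $1/5$, are correct \emph{given} the extremal structure. The genuine gap is the first step, and as stated it cannot work. The inequality $q(\cC)\leq \tfrac12\sum_{\alpha_i\geq 2}(\alpha_i-2)+\mathrm{correction}(g,k,\ell(\alpha))$ is false if the correction is bounded: for $\alpha=(2^{2g-2})$ the main term vanishes, yet the stratum carries configurations with roughly $g-1$ cylinders (a cyclic chain alternating cylinders with genus-one surfaces of trivial holonomy, each creating a pair of order-$2$ newborn zeros). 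If instead the correction is allowed to grow with $\ell(\alpha)$, the bound is too weak for the upper bound $1/3$: a partition containing one zero of huge odd order gets a main term of size about $2g+k/2$ against a denominator of about $2g+k$, so your bound alone would allow the ratio to approach $1$, whereas such zeros in fact contribute essentially nothing to the cylinder count. What is really needed -- and what you yourself flag as ``the main obstacle'' without supplying it -- is the mod-$4$ dichotomy coming from the Masur--Zorich classification: along a chain of trivial-holonomy surfaces and cylinders the newborn zeros have order divisible by $4$ (one per intermediate surface of type $+2.1$), zeros of order $\equiv 2 \bmod 4$ can only be created in pairs (type $+2.2$), and odd zeros arise only from $\ominus$ surfaces or loops in the graph, hence at most a bounded number of them can be newborn, the rest being interior singularities that contribute nothing. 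This is precisely Lemma~\ref{lem:odd:zeros} and the exact formula $q_{\max}(\alpha)=m+\lfloor n/2\rfloor+\epsilon_\alpha$ of Proposition~\ref{prop:nbermaxcylquad} (with $m$, $n$ the numbers of zeros of order $\equiv 0$, resp.\ $2$, mod $4$), from which the paper deduces Proposition~\ref{prop:qmax:dim}; no prong count that only sees $\sum(\alpha_i-2)$ and $\ell(\alpha)$ can detect these holonomy and parity constraints.

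A secondary point: your ``beads on a string'' construction is morally the paper's extremal configuration, but realizability is not automatic either -- one must exhibit it inside the Masur--Zorich list, with a genus-one $\oplus$ surface between consecutive cylinders creating each order-$4$ zero and with the chain closed by an admissible graph type absorbing the residual partition of $k'$ and the poles (this is where the bounded defect $\epsilon_\alpha\in\{0,1,2\}$ comes from). Once the exact formula for $q_{\max}$ and this construction are in place, your optimization over $\alpha$ and the two limits go through as you wrote them.
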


\subsection{Historical remarks}
The Siegel--Veech constants for the strata of Abelian differentials were evaluated in the paper~\cite{EMZ}; the relations between various Siegel--Veech constants were studied in~\cite{Vo} and some further ones in a recent paper~\cite{BG}. The computation in~\cite{EMZ} involves a combination of rather involved combinatorial and geometric constructions. To test the consistency of their theoretical predictions numerically, the authors of~\cite{EMZ} compare the formulas for the Lyapunov exponents expressed in terms of the Siegel--Veech constants (reduced, in turn, to combinations of volumes of the boundary strata) with numerics provided by experiments with the Lyapunov exponents. These tests are based, in particular, on the results of A.~Eskin and A.~Okounkov~\cite{EO} providing the explicit values of the volumes of all strata of Abelian differentials in small genera.

The description of combinatorial geometry of configurations of saddle connections for the strata of quadratic differentials is performed in the paper of H.~Masur and A.~Zorich~\cite{MZ}; for the hyperelliptic components and for strata in genus zero such description is given in the paper of C.~Boissy~\cite{B}.

The evaluation of the corresponding Siegel--Veech constants in genus zero was recently performed by J.~Athreya, A.~Eskin, and A.~Zorich~\cite{AEZ}; see also the related paper~\cite{AEZ2}. The results were also verified by computer experiments with Lyapunov exponents combined with the knowledge of the volumes of the strata of quadratic differentials in genus zero. (The authors prove in~\cite{AEZ} an extremely simple explicit formula for such volumes in genus zero conjectured by M.~Kontsevich.)

In the current paper we treat the strata of quadratic differentials in arbitrary genus. We should point that, in the contrast to the strata of Abelian differentials, the analogous results of A.~Eskin, A.~Okounkov~\cite{EO2}, and R.~Pandharipande~\cite{Eskin:Okounkov:Pandharipande} do not provide explicit values for the volumes of the strata of quadratic differentials. This is why in \cite{G} we have computed the values of volumes of a large amount of strata in low dimension, implementing the algorithm of Eskin and Okounkov. These volumes were independently tested in \cite{DGZZ}. In this paper we use these values to obtain some exact values of Siegel--Veech constants for the strata of quadratic differentials away from genus zero, and to show that our formulas for Siegel--Veech constants are consistent with numerics coming from Lyapunov exponents of the Hodge bundle over the Teichm\"uller flow. Furthermore, we have compared all our results with the program POLYGON of Alex Eskin, which counts configurations of saddle connections for individual translation surfaces.

\subsection{Structure of the paper}
The paper is divided into five parts. The first two sections, theoretical, give the proof of Theorem \ref{th:ccyl}, and develop the results on a special family of strata: $\cQ(1^k, -1^l)$. For the first strata of this type we compute the exact values of Siegel--Veech constants and we obtain the exact values of the sums of Lyapunov exponents of the Hodge bundle along the Teichm\"uller flow. These values are important for applications to billiards, such as windtree models \cite{Delecroix:Hubert:Lelievre}.

The computations of this first part generalize the computations presented in the articles \cite{EMZ}, and \cite{AEZ}, but in higher genus, this theory does not easily yield the exact values of Siegel-Veech constants, because the techniques involve phenomena of higher complexity. This is why we present in a second part all explicit computations.

Section \ref{sect:volhyp} develops the formula in the case of hyperelliptic components of strata. For these components, the values of the volumes and the Siegel--Veech constants are known, which enables us to check the coherence of the formulas in this case.

Section \ref{sect:ex} is devoted to the application of the main formula for strata of small dimension where we have explicit values of the volumes \cite{G}. In particular for non-varying strata we check the coherence of the main formula. In the other cases we get new explicit Siegel--Veech constants that we can compare with the experimental value of the sum of Lyapunov exponents. This comparison serves as an independent test of coherence of our choice of numerous normalizations and a confirmation that all discrete symmetries of relatively sophisticated configurations are taken into consideration.

We complete the paper with the extension of some geometric results proved in \cite{BG} for the strata of Abelian differentials to the strata of quadratic differentials.

\subsection{Acknowledgements}
I wish to thank my advisor Anton~Zorich, for his guidance and support during the preparation of this paper. I am grateful to Alex~Eskin to letting me use his program on configurations to check the computations of this paper and for his enlightening explanations of the method of volume computations. I thank Pascal~Hubert for helpful remarks on the preliminary version of this paper, Max~Bauer for many helpful discussions related to Siegel--Veech constants. I thank Anton Zorich and Charles Fougeron for providing me numerical data on Lyapunov exponents. I am grateful to Howard Masur and Anton Zorich for letting me use their pictures from \cite{MZ}. I thank the anonymous referee for useful comments and careful reading of the manuscript. I thank ANR GeoDyM for financial support.

\section{Preliminaries }\label{sect:pre}
\subsection{\^Homologous saddle connections}\label{ssect:hom}
We recall here from \cite{MZ} the notion of {\em \^homologous} saddle connections. 

 Any flat surface $(S,q)$ in $\cQ(\alpha)$ admits a canonical ramified double cover $\hat{S}\overset{p}{\rightarrow} S$ such that the induced quadratic differential on $\hat{S}$ is a global square of an Abelian differential, that is $p^* q= \omega^2$ and $(\hat S, \omega)\in \cH(\hat \alpha)$. Let $\Sigma=\{P_1, \dots P_n\}$ denote the singular points of the quadratic differential on $S$, and $\hat\Sigma=\{\hat P_1, \dots \hat P_N\}$ the singular points of the Abelian differential $\omega$ on $\hat S$. Note that the pre-images of poles $P_i$ are regular points of $\omega$ so do not appear in the list $\hat\Sigma$. The subspace $H^1_-(\hat S, \hat\Sigma; \C)$ anti-invariant with respect to the action of the hyperelliptic involution provides local coordinates in the stratum $\cQ(\alpha)$ in the neighborhood of $S$.
 
 Let $ \gamma $ be a saddle connection on $ S $. We denote $\gamma'$ and $\gamma''$ its two lifts on $\hat S$. If $[\gamma]= 0$ downstairs, then $[\gamma']+[\gamma'']= 0$ upstairs, and in this case we define $[\hat \gamma]   := [\gamma']$. In the other case we have $[\gamma']+[\gamma'']\neq 0$ and we define $[\hat \gamma]:=[\gamma']-[\gamma'']$. We obtain
  an element of $H^1_-(\hat S, \hat\Sigma; \C)$.
  
 Then two saddle connections $ \gamma_1 $ and $ \gamma_2 $ are said to be \^homologous if $ [\hat\gamma_1]=[\hat\gamma_2] $ in $H_1(\hat S, \hat\Sigma, \Z)$, under an appropriate choice of orientations of $\gamma_1, \gamma_2$.

 \subsection{Configurations of saddle connections}
 A configuration is one of the geometric type of all possible maximal collections of \^homologous saddle connections. We detail here precisely the information that characterizes the geometric type of a collection (Definition 3 of \cite{MZ}).
 Given such a collection of saddle connections on a surface $S$, cutting along these saddle connections gives a union of surfaces with boundaries. These surfaces can be either flat cylinders, or surfaces obtained by a surgery from a surface of trivial or non trivial holonomy. These surfaces are called boundary surfaces. We record the genus and the order of the singularities of all these surfaces. We record also which type of surgery is applied to which singularity on each surface with the precise angles. Finally we record the way the surfaces are glued in the initial surface. All this information characterizes a configuration of \^homologous saddle connections. 

\subsection{Graphs of configurations }\label{ssect:graph}

We recall here briefly how the graphs introduced by Masur and Zorich in \cite{MZ} encode all combinatorial information about a configuration. We reproduce Figure 3 and Figure 6 of \cite{MZ} describing the graphs on Figure \ref{fig:fig3MZ} and Figure \ref{fig:fig6MZ} in order to keep the paper self-contained. For a complete description of the configurations using graphs see the original article \cite{MZ}. 

\begin{figure}
\centering
\includegraphics[scale=1]{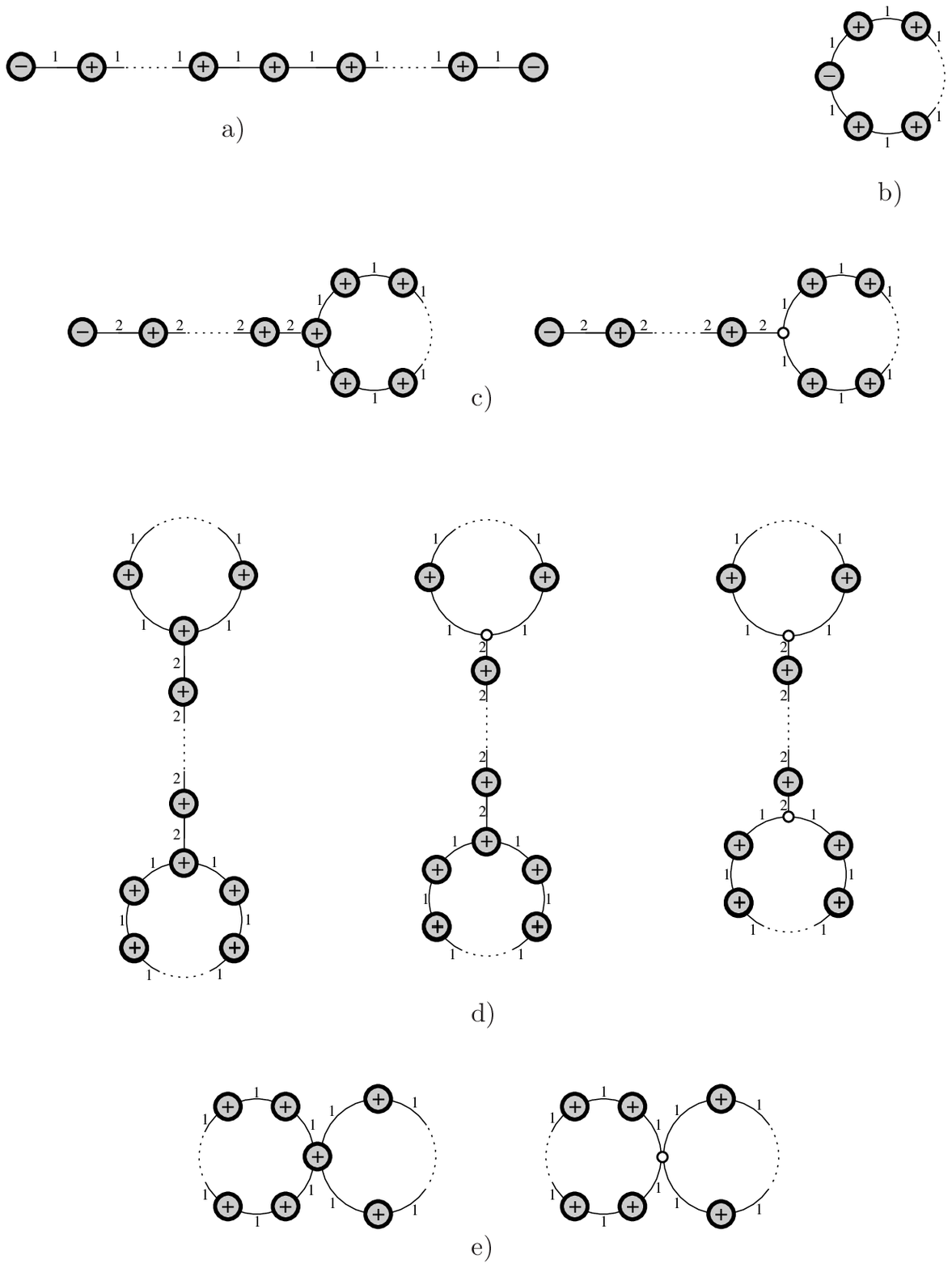}
\caption{
\label{fig:fig3MZ}
Figure 3 of \cite{MZ}: Classification of admissible graphs.
}
\end{figure}

\begin{figure}
\centering
\includegraphics[scale=1]{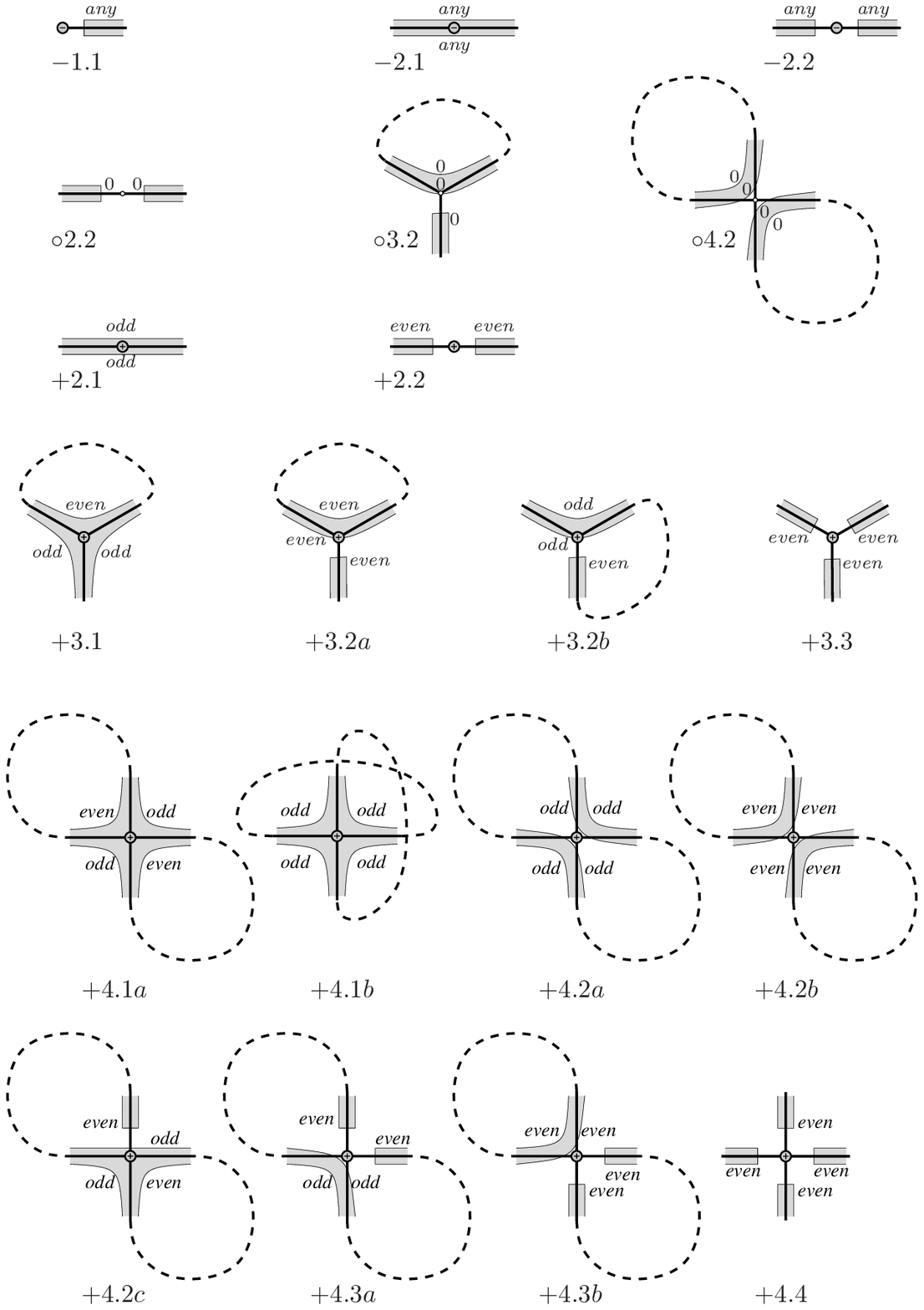}
\caption{
\label{fig:fig6MZ}
Figure 6 of \cite{MZ}: Classification of embedded local ribbon graphs.
}
\end{figure}

Let $S$ be a half-translation and $\gamma$ a saddle connection of configuration $\cC$. The graph of the configuration $\cC$ is given by the following procedure: associate to each boundary surface a vertex in the graph, with the following symbolic: a vertex $\oplus$ represents a surface of trivial holonomy, a vertex $\ominus$ a surface of non trivial holonomy, and a vertex $\circ$ a cylinder. There is an edge between two vertices if the boundaries of the corresponding surfaces share a common saddle connection. At this stage we obtain a graph described by Figure \ref{fig:fig3MZ}. 

The surgeries performed on each surface are represented by local ribbon graphs belonging to the list described in Figure \ref{fig:fig6MZ}. These local graphs are decorated with numbers $k_i$ which are the numbers of horizontal geodesic rays emerging from the zeros on which we perform the surgery, in an angular sector delimited by two \^homologous saddle connections.  The union of these local ribbon graphs forms globally a ribbon graph that can be drawn on the graph giving the organization of the surfaces. The boundary of this ribbon graph has several connected components, each of them represents a newborn zero. To compute the order of a newborn zero, one can count the number of geodesic rays emerging from this point, that is, sum all the $k_i$'s met when one goes along the  connected component of the boundary of the ribbon graph corresponding to the newborn zero. The cone angle around this point is then $\pi\sum_i (k_i+1)$. See Figure 7 in \cite{MZ} for an example. 

\subsection{General strategy for the computation of Siegel--Veech constants}\label{ssect:genstrat}
We recall here the sketch of the general method developed in \cite{EMZ} to evaluate Siegel--Veech constants in the Abelian case, transposed to the quadratic case in genus 0 in \cite{AEZ}.

Let $ V_{\cC}(S) $ be the set of holonomy vectors of saddle connections on $ S $ of type $ \cC $. The number of configurations $\cC$ in $ S $ such that the length of the \^homologous saddle connections is bounded is then \[ N(S, \cC, L)=\frac{1}{2}\vert V_{\cC}(S)\cap B(0,L)\vert,\]
where the factor $ \frac{1}{2} $ compensates the fact that the saddle connections are not oriented and so their holonomy vectors are defined up to a sign.
If $ q $ is the number of cylinders in the configuration and $ q_1 $ the number of ``thin'' cylinders, we define as well \[ N_{cyl}(S, \cC, L)=\frac{1}{2}\left(q\left\vert V_{\cC}(S)\cap B\left(0,\frac{L}{2}\right)\right\vert+q_1\left\vert V_{\cC}(S)\cap A\left(\frac{L}{2},L\right)\right\vert\right),\] with $ A\left(\frac{L}{2},L\right)=B(0,L)\setminus B\left(0,\frac{L}{2}\right)$. Note that $ N_{cyl}(S, \cC, L) $ counts each realization of configuration $ \cC $ with weight the number of cylinders of width smaller than $ L $: if the width of the thin cylinders is smaller than $ L/2 $ then all the $ q $ cylinders have their width smaller than $ L $, if the width of the thin cylinders is comprised between $ L/2 $ and $ L $, then the thick cylinders do not count. 

Simplifying the last expression we get \begin{equation}\label{eq:Ncyl} N_{cyl}(S, \cC, L)=q_2 N(S, \cC, L/2) + q_1 N(S, \cC, L)\end{equation} where $ q_2 $ is the number of thick cylinders ($ q=q_1+q_2 $).

Finally we define
\[ N_{area}(S, \cC, L)=\frac{1}{2}\sum_{v\in  V_{\cC}(S)\cap B(0,L)} A(v)\]
where $ A(v) $ is the area of the cylinders of width smaller than $ L $ among those associated to the saddle connections of type $ \cC $ and holonomy vector $ \pm v$. Note that $ N_{area}(S, \cC, L) $ weights only the cylinders which are counted by $ N_{cyl}(S, \cC, L) $.

\begin{conv}\label{convarea} Following \cite{AEZ} we denote $\cQ_1(\alpha)$ the hypersurface in $\cQ(\alpha)$ of flat surfaces of area $1/2$ such that the area of the double cover is $1$.\end{conv}

Let $\mu$ denote the natural $ PSL(2, \R) $-invariant measure on $\cQ(\alpha)$, called Masur-Veech measure, induced by the Lebesgue measure in period coordinates. We choose a normalization for $ \mu $ in \S\ref{ssect:norm}. This measure induces a measure $ \mu_1 $ on $ \cQ_1(\alpha) $ in the following way:
if $ E $ is a subset of $ \cQ_1(\alpha) $, we denote $ C(E) $ the cone underneath $ E $ in the stratum $ \cQ(\alpha) $: \[C(E)=\{S\in\cQ(\alpha)\mbox{ s.t. } \exists r\in (0, +\infty),\; S=rS_1 \mbox{ with } S_1\in E\} \]
and we define 
\[\mu_1(E)=2d\cdot\mu(C(E)), \]
with $ d=\dim_C\cQ(\alpha) $, that is, the measure $ \dd\mu $ disintegrates in $ \dd\mu=r^{2d-1}\dd r\dd\mu_1 $.

Eskin and Masur proved in \cite{EM} that the asymptotic \[\lim\limits_{L\to\infty}\frac{N_*(S, \cC, L)\cdot(\mbox{Area of }S)}{\pi L^2}\] does not depend on the surface $S$ for almost every surface in a connected component of a stratum of Abelian or quadratic differentials. This constant is denoted by $c_{*}(\cC)$ and it is called the Siegel--Veech constant for the configuration $\cC$.

\begin{remark}Note that it follows directly from this formula and the definition (\ref{eq:Ncyl}) of $ N_{cyl}(S, \cC, L) $ that: \[c_{cyl}(\cC)=\left(q_1+\frac{1}{4}q_2\right)c(\cC),\]
which is the equation (\ref{eq:ccylc}) in Theorem \ref{th:ccyl}.
\end{remark}

Now let $\cQ(\alpha)$ be a connected stratum.
The Siegel--Veech formula (cf \cite{Ve}, Theorem 0.5) gives the existence of constants $ b_*(\cC) $ such that 
\[\frac{1}{\Vol(\cQ_1(\alpha))}\int_{\cQ_1(\alpha)}N_*(S, \cC, L)\dd\mu_1(S)=b_*(\cC)\pi L^2\]
so necessarily $ b_*(\cC)=2c_*(\cC) $ and we can express the Siegel--Veech constant as \[c_*(\cC)=\lim\limits_{\varepsilon\to 0}\frac{1}{2\pi\varepsilon^2}\frac{1}{\Vol(\cQ_1(\alpha))}\int_{\cQ_1(\alpha)}N_*(S, \cC, \varepsilon)\dd\mu_1(S).\]
Actually the integral is over the subset $ \cQ_1^{\varepsilon}(\cC)$ of $ \cQ_1(\alpha) $ formed by the surfaces with at least one family of ``short'' saddle connections of type $ \cC $, where ``short'' means of length smaller than $ \varepsilon $. We decompose this subset as $ \cQ_1^{\varepsilon}(\cC)=\cQ_1^{\varepsilon, thick}(\cC)\cup \cQ_1^{\varepsilon, thin}(\cC) $ where $ \cQ_1^{\varepsilon, thin}(\cC) $ is the set of surfaces having at least two distinct collections of short saddle connections of type $ \cC $. Eskin and Masur proved in \cite{EM} that this subset is so small that we have \[\frac{1}{\Vol\cQ_1(\alpha)}\int_{\cQ_1^{\varepsilon, thin}(\cC)}N_*(S, \cC, \varepsilon)\dd\mu_1(S)=o(\varepsilon^2).\]

Finally we obtain 
\begin{equation}c_*(\cC)=\lim\limits_{\varepsilon\to 0}\frac{1}{2\pi\varepsilon^2}\frac{\Vol_*\cQ_1^{\varepsilon}(\cC)}{\Vol\cQ_1(\alpha)}\label{eq:SVbase}\end{equation}

where $ \Vol_*\cQ_1^{\varepsilon}(\cC) $ is the weighted volume:
\[\Vol_*\cQ_1^{\varepsilon}(\cC)=\int_{\cQ_1^{\varepsilon}(\cC)}W_*(\cC,S)\dd\mu_1(S)\]
with $ W(\cC,S)=1 $, $ W_{cyl}(\cC, S)$ is equal to the number of cylinders of width smaller than $ \varepsilon $, $ W_{area}(\cC,S) $ is equal to the area of the cylinders of length smaller than $\varepsilon$ in the configuration $ \cC $ on $ S $.

The last step is the computation of $ \Vol_*\cQ_1^{\varepsilon}(\cC) $ in term of the volume of the boundary stratum, see \S~\ref{ssection:computation}.

Counting saddle connections of type $ \cC $ is related to a more general problem: counting saddle connections with no fixed type. Introducing the number $ N(S, L) $ of distinct holonomies of saddle connections shorter than  $ L $ on $ S\in\cQ(\alpha) $, the corresponding Siegel--Veech constants \[c_*(\cQ(\alpha))=\lim\limits_{L\to\infty}\frac{N(S, L)\cdot(\mbox{Area of }S)}{\pi L^2}\] are also well-defined for almost every $ S\in\cQ(\alpha) $ and depend only of the stratum. 
Then we have naturally \[c_*(\cQ(\alpha))=\sum_{\cC}c_*(\cC).\]
The constant $ c_{area}(\cQ(\alpha)) $ is particularly important because the formula of \cite{EKZ} relates it to the sum of Lyapunov exponents for the Teichm\"uller geodesic flow. So it implies a lot of applications to the dynamics in polygonal billiards. Using the numerical experiments on Lyapunov exponents performed in particular by A. Zorich, V. Delecroix and C. Fougeron, the Eskin-Kontsevich-Zorich formula provides numerical approximation for the constants $ c_{area}(\cQ(\alpha)) $, and that gives a way to check computations on the constants $ c_{area}(\cC) $. This is the main reason why we focus on configurations containing cylinders: they are the only ones that contribute to the constant $ c_{area}(\cQ(\alpha)) $.

Note that this computation is somehow an analog to one of Mirzakhani, but in the flat world: in \cite{Mi}, Mirzakhani shows that the number of simple closed geodesics on a hyperbolic surface is asymptotically $cL^{6g-6}$, where the constant is related to the Weil--Peterson volumes; doing a similar counting for flat metric with singularities (in the same conformal class) we get $cL^2$, where the constant is also expressed in terms of the Masur--Veech volumes.

\subsection{Strata that are not connected}\label{ssect:noncon}
In the last section we explained the method to compute Siegel--Veech constants for connected strata. The classification of connected components of strata is given in \cite{L2}. Most of the strata are connected, the only ones that are not connected are the one that have a hyperelliptic component (except some sporadic examples in genus 3 and 4), and in this case there is only one supplementary component. The three types of strata containing hyperelliptic components are recalled on \S~\ref{sect:volhyp}.

The general strategy for computing Siegel--Veech constants for the connected strata can be adapted for connected components. For a connected component $ \cQ^{comp}(\alpha) $ we define the Siegel--Veech constants by the means:
\[c_*(\cQ^{comp}(\alpha), \cC)=\lim\limits_{\varepsilon\to 0}\frac{1}{2\pi\varepsilon^2}\frac{1}{\Vol(\cQ_1^{comp}(\alpha))}\int_{\cQ_1^{comp}(\alpha)}N(S, \cC, \varepsilon)\dd\mu_1(S).\] 
Note that the connected components of $ \cQ_1(\alpha) $ are exactly the intersection of $ \cQ_1(\alpha) $ with the connected components of $ \cQ(\alpha) $.
We have also the property that
\[c_{*}(\cC)=\lim\limits_{L\to\infty}\frac{N_*(S, \cC, L)\cdot(\mbox{Area of }S)}{\pi L^2},\]
for almost every $ S $ in the component $ \cQ^{comp}(\alpha) $.

So we will obtain the same evaluation: 
\begin{equation}c_*(\cQ^{comp}(\alpha),\cC)=\lim\limits_{\varepsilon\to 0}\frac{1}{2\pi\varepsilon^2}\frac{\Vol_*\cQ_1^{\varepsilon}(comp,\cC)}{\Vol\cQ_1(\alpha)}.\end{equation}\label{eq:SVbasecomp}

We apply this method in the case of hyperelliptic components in section \ref{sect:volhyp}. 

\section{Computation of Siegel-Veech constant for connected strata}\label{sect:SV}

In this section, $ \cQ(\alpha) $ will denote a connected stratum of quadratic differentials. We will evaluate Siegel--Veech constants $ c_*(\cC) $ defined in \S~\ref{ssect:genstrat} using equation (\ref{eq:SVbase}).

\subsection{Choice of normalization}\label{ssect:norm}
We have to choose a normalization for the volume element on a stratum $\cQ(\alpha)$, which is equivalent to choose a lattice in the space $H^1_-(\hat S, \hat\Sigma; \C)$ which gives the local model of the stratum $\cQ(\alpha)$ around $S$.

\begin{conv}\label{convreseau} We follow the convention of \cite{AEZ} and choose, as lattice in $H^1_-(\hat S, \hat\Sigma; \C)$ of covolume $1$, the subset of those linear forms which take values in $\Z \oplus i\Z$ on $H^-_1(\hat S, \hat\Sigma;\Z)$, that we will denote by $(H^-_1(\hat S, \hat\Sigma;\Z))^{*}_{\C}$.
\end{conv} 

This convention implies that the non zero cycles in $H_1(S, \Sigma,\Z)$ (that is, those represented by saddle connections joining two distinct singularities or closed loops non homologous to zero) have half-integer holonomy, and the other ones (closed loops homologous to zero) have integer holonomy. 

\begin{conv}\label{conv:label}We choose to label all zeros and poles. This affects the computation of volumes, but it is easy to deduce the value of volumes of strata with anonymous singularities.
\end{conv}

\subsection{Construction of a basis of $H_1^-(\hat S, \hat\Sigma,\Z)$}

In this section we recall the generic construction given in \cite{AEZ} of a basis of $H_1^-(\hat S, \hat\Sigma,\Z)$ from a basis of $H_1(S, \Sigma,\Z)$, and also a specific construction for each configuration. In the following sections we will look at every configuration and use the specific basis associated to each configuration in order to have a nice expression of the measure in terms of parameters of the cylinders. 

For a primitive cycle $[\gamma]$ in $H_1(S, \Sigma,\Z)$, that is, a saddle connection joining distinct zeros or a closed cycle (absolute cycle), the lift $[\hat\gamma]$ is a primitive element of $H_1^-(\hat S, \hat\Sigma,\Z)$.

\subsubsection{``Generic'' basis (cf \cite{AEZ} \S~3.1.)}
Let $ k $ be the number of poles in $ \Sigma $, $ a $ the number of even zeroes and $ b $ the number of odd zeros (of order $ \geq 1 $). Assume that the zeros are numbered in the following way: $P_1, \dots P_a$ are the even zeros, $P_{a+1}, \dots, P_{a+b}$ are the odd zeros and $P_{a+b+1}, \dots, P_n$ the poles, and take a simple oriented broken line $P_1, \dots P_{n-1}$. Take each saddle connection $\gamma_i$ represented by $[P_i, P_{i+1}]$ for $i$ going from $1$ to $n-2$, and a basis $\{\gamma_{n-1}, \dots, \gamma_{n+2g-2}\}$ of $H_1(S, \Z)$.

 \begin{lemma}\label{lem:basis}The family $\{\hat \gamma_1,\dots ,\hat \gamma_{n+2g-2}\}$ is a basis of $H_1^-(\hat S, \hat\Sigma,\Z)$.\end{lemma}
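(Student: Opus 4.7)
My plan has three parts: a rank computation, an anti-invariance check, and a linear independence argument.

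\emph{Rank.} Since $H^1_-(\hat S, \hat\Sigma; \C)$ provides local coordinates on $\cQ(\alpha)$, its complex dimension equals $\dim_{\C} \cQ(\alpha) = 2g + n - 2$, so $\mathrm{rk}_{\Z}\, H_1^-(\hat S, \hat\Sigma; \Z) = n + 2g - 2$, matching the cardinality of our family. I would also verify this directly: Riemann--Hurwitz applied to $p$ (ramified over the $b$ odd-order zeros and $k$ poles) gives $\hat g = 2g - 1 + (b+k)/2$; combined with $|\hat\Sigma| = 2a + b$ and the identification of the $\tau$-invariant part $H_1^+$ with $H_1(S, \Sigma_z; \Q)$ of rank $2g + a + b - 1$ (where $\Sigma_z$ denotes only the zeros of $q$, since preimages of poles are regular for $\omega$), one recovers $\mathrm{rk}\, H_1^- = 2g + n - 2$.

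\emph{Anti-invariance.} Each $\hat\gamma_i = [\gamma'_i] - [\gamma''_i]$ satisfies $\tau_*\hat\gamma_i = -\hat\gamma_i$ since the hyperelliptic involution $\tau$ exchanges the two lifts; hence $\hat\gamma_i \in H_1^-$. Given the rank equality, the lemma reduces to showing $\Q$-linear independence of the family.

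\emph{Independence.} I would consider the linear map $\phi: H_1(S, \Sigma; \Q) \to H_1^-(\hat S, \hat\Sigma; \Q)$ sending $\gamma \mapsto \hat\gamma$. Since the domain has rank $n+2g-1$ and the codomain has rank $n+2g-2$, if $\phi$ is surjective then $\ker\phi$ is one-dimensional. The proof then reduces to two sub-claims: (i) $\phi$ is surjective, and (ii) $\ker\phi$ is not contained in $\mathrm{span}_\Q\{\gamma_1, \dots, \gamma_{n+2g-2}\}$. Under both, $\phi$ restricted to this span is injective with image of full rank $n+2g-2$, so the images $\hat\gamma_i$ form a basis of $H_1^-$. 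Sub-claim (i) follows from the surjectivity of the anti-invariant projection $H_1(\hat S, \hat\Sigma; \Q) \twoheadrightarrow H_1^-$ together with the fact that every anti-invariant class admits a representative of the form $\hat\gamma$ for $\gamma$ a chain on $S$. For sub-claim (ii), one extends $\{\gamma_1, \dots, \gamma_{n+2g-2}\}$ by a cycle $\gamma_0$ joining $P_{n-1}$ to $P_n$: if $\gamma_0 \in \ker\phi$ then $\ker\phi$ is exactly the missing direction and we are done; otherwise the single $\Q$-relation among $\{\hat\gamma_0, \dots, \hat\gamma_{n+2g-2}\}$ must involve $\hat\gamma_0$ nontrivially, and omitting it again yields independence of $\{\hat\gamma_1, \dots, \hat\gamma_{n+2g-2}\}$.

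\emph{Main obstacle.} The principal technical point is identifying $\ker\phi$ and verifying sub-claim (ii). Geometrically, $\ker\phi$ encodes a distinguished rank-one direction in $H_1(S, \Sigma; \Q)$ reflecting the branch structure of $p$, and one must argue that the specific combinatorial choice of broken line $P_1, \dots, P_{n-1}$ (leaving $P_n$ out) picks up this direction. The behavior near ramification points demands care: odd zeros and poles have a unique preimage in $\hat S$, so $\gamma'_i$ and $\gamma''_i$ share endpoints when such a point is traversed, changing whether $\hat\gamma_i$ is an absolute or honest relative cycle. A short case analysis on the type of $P_n$ (even zero, odd zero, or pole) handles these situations uniformly and confirms that the convention of placing the even zeros, then odd zeros, then poles in the order $P_1, \dots, P_n$ ensures that the missing edge from $P_{n-1}$ to $P_n$ correctly captures the kernel direction.
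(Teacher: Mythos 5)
Your rank computation and the anti-invariance check are fine and agree with the paper's count (the paper gets $\dim_\C H_1^-(\hat S,\hat\Sigma)=2g-2+n$ by the same Riemann--Hurwitz bookkeeping). The gap is in the independence step: the object your whole argument hangs on, ``the linear map $\phi\colon H_1(S,\Sigma;\Q)\to H_1^-(\hat S,\hat\Sigma;\Q)$ sending $\gamma\mapsto\hat\gamma$'', does not exist. The hat operation is defined on an actual arc or curve (together with a labelling of its two lifts), not on its class in $H_1(S,\Sigma;\Q)$, and it is not additive; indeed its very definition splits into cases according to whether $[\gamma]=0$. Concretely, a saddle connection $\gamma$ with $[\gamma]=0$ in $H_1(S,\Sigma;\Z)$ (these occur and are used later in the paper: the waist curves of cylinders in configurations of type $a)$ are exactly of this kind) has $\hat\gamma=[\gamma']$ a nonzero, even primitive, element of $H_1^-$, whereas any linear map factoring through $[\gamma]$ would be forced to send it to $0$. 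Conceptually, $H_1^-$ is the homology of $S$ relative to the zeros with coefficients in the nontrivial rank-one local system of the double cover, and there is no natural surjection onto it from the untwisted $H_1(S,\Sigma;\Q)$. So sub-claims (i) and (ii) and the kernel bookkeeping do not get off the ground; and since you explicitly defer ``identifying $\ker\phi$'' as the main obstacle, the linear independence of $\{\hat\gamma_1,\dots,\hat\gamma_{n+2g-2}\}$ --- the actual content of the lemma --- is never established. The paper instead argues directly with the explicit family: after the dimension count it completes $\{\hat\gamma_i\}$ by invariant classes to a basis of $H_1(\hat S,\hat\Sigma)$ and checks that the relevant intersection matrix is integral of determinant $1$.

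There is a second, independent gap: the reduction ``given the rank equality, the lemma reduces to showing $\Q$-linear independence'' is not valid for a statement about a $\Z$-basis. Full-rank, $\Q$-independent lattice vectors may generate only a finite-index sublattice (e.g.\ $\{2e_1,e_2\}$ in $\Z^2$), and ruling this out is precisely the point here: the lemma is what identifies the covolume-one lattice $(H_1^-(\hat S,\hat\Sigma;\Z))^*_\C$ of Convention~\ref{convreseau}, so an index-$m$ discrepancy would rescale every volume and Siegel--Veech constant computed afterwards. The paper's unimodularity (determinant $1$) argument is exactly what closes this, and your proposal omits it.
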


\begin{proof}First it is clear that the elements $ \hat \gamma_1,\dots ,\hat \gamma_{n+2g-2} $ are primitive elements of $H_1^-(\hat S, \hat\Sigma,\Z)$ and linearly independent. Moreover they do not generate a proper sub-lattice of $H_1^-(\hat S, \hat\Sigma,\Z)$.

 Each of the $ k $ poles lifts to a regular point in $ \hat S $ so does not appear in the list $ \hat\Sigma $. An even zero of order $ \alpha_i $ lifts to two zeros of degrees $ \frac{\alpha_i}{2} $, and an odd zero of order $ \alpha_j $ lifts to a zero of degree $ \alpha_j+1 $. So we have $ n=\vert\Sigma\vert=k+a+b $ and $ N=\vert\hat\Sigma\vert=2a+b $. Thus if $ \hat g $ is the genus of $ \hat S $ we have $ 4g-4=-k+\sum_{\alpha_i\geq 1}\alpha_i $ and $ 2\hat g-2=\sum_{\alpha_i\geq 1}\alpha_i +b $ and so \begin{eqnarray*} \dim_\C(H_1(\hat S, \hat\Sigma, \Z))=2\hat g -1+N &=&(2g-2+n)+(2g-1+a+b)\\ &=&\dim_\C H_1^-(\hat S,\hat\Sigma, \C)+\dim_\C H_1^+(\hat S, \hat\Sigma, \C).  \end{eqnarray*}
 
This equality on dimensions shows that we can complete the family $\{\hat \gamma_1, \dots , \hat \gamma_{n+2g-2}\}$ with $\{\gamma_1', \dots, \gamma_{n-k-1}', \gamma_{n-1}', \dots, \gamma_{n+2g-2}'\}$ to form a basis of $H_1(\hat S, \hat\Sigma, \R)$ (the linear independence is clear from the construction). The intersection matrix has integer coefficients and is of determinant 1, so that ends the proof of the lemma. 
\end{proof}

\subsubsection{Basis associated to a configuration}\label{goodbasis}
Fix a configuration $\cC$. As in \cite{EMZ}, we define an appropriate family $\{\gamma_1, \dots, \gamma_{n+2g-2}\}$ of $H_1(S, \Sigma,\Z)$ for $S\in \cC$, which lifts to a basis of $H_1^-(\hat S, \hat\Sigma,\Z)$, as follows: 
\begin{itemize}
\item for each component of the principal boundary stratum $\cQ(\alpha_i')$ take a family $\{\beta^i_1, \dots, \beta^i_{n_i+2g_i-2}\}$ of $H_1(S_i', \Sigma_i,\Z)$ such that $\{\hat \beta^i_1, \dots, \hat \beta^i_{n_i+2g_i-2}\}$ is a basis of $H_1^-(\hat S_i',\hat\Sigma_i,\Z)$ as previously,
\item for each \^homologous cylinder take a curve $\delta_j$ joining its boundary singularities (there might be an ambiguity in the choice of such a curve, cf \S~\ref{sssect:ambiguity})
\item take a saddle connection or a closed curve in the homology class of $\gamma$ (we denote $\pm v$ the holonomy of $\gamma$ ). 
\end{itemize} 
Lifting this basis to $H_1^-(\hat S, \hat\Sigma,\Z)$ using the  ~$\hat{ }$ operator provides a primitive basis of $H_1^-(\hat S, \hat\Sigma,\Z)$, as previously.

We will keep the same notations for elements in $(H^-_1(\hat S, \hat\Sigma;\Z))^{*}_{\C}$ 

\subsection{Computation}\label{ssection:computation}
Fix a configuration $\cC$ containing $q$ cylinders ($q\geq 1$). Now we give a complete description of the measure $\mu$ in terms of parameters of the configuration by disintegrating the volume element $\dd\mu$. 

By \cite{EM,MS} we have $\Vol_* \cQ_1^\epsilon (\cC)=\Vol_*\cQ_1^{\epsilon, thick }(\cC)+o(\epsilon^2)$, so we will describe $\mu$ only on $\cQ^{\epsilon, thick }(\cC)$.

Let $S\in\cQ^{\epsilon, thick}(\cC)$. Local coordinates near $S$ are given by $H^1_-(\hat S, \hat\Sigma,\C)$, and $\mu$ is just Lebesgue measure in this coordinates. 
 Choose now a basis associated to the configuration $\cC$ as above. It follows from the papers \cite{EMZ,MZ} that the measure $\dd\mu$ in $\cQ^{\epsilon, thick }(\cC)$ disintegrates as the product of the measure $\dd\mu'$ on $\cQ(\alpha')$ and a natural measure $\dd\nu_T$ on the space of parameters $\cT$ of the cylinders, that we describe in \S~3.3.1: $$\dd\mu=M'\dd\mu'\dd\nu_T$$ where $M'$ denotes the number of ways to get a surface $S$ in $\cQ^{\epsilon, thick }(\cC)$ when the parameters of the configuration are fixed. 

\subsubsection{Description of the space $\cT$ of the cylinders}\label{sssect:ambiguity}
Generally in a configuration, a labeling of the zeros and a choice of a covering path of the graph of the configuration induce a labeling of the cylinders. Sometimes some symmetries occur that exchange the cylinders but stabilize the zeros, they are taken into account in \S\ref{sect:count}. In the following we assume that the cylinders are numbered.

Roughly $\cT$ is described by coordinates $\pm v, h_1, \dots, h_{q}, t_1, \dots t_q$ representing the width, the heights and the twists of the cylinders, defined such that $h_i+it_i$ is the holonomy of the curve $\delta_i$. The problem here is that there might be an ambiguity for the choice of this curve and so for the definition of the twist. In the following we assume that the cylinders are horizontal, that is $ \pm v $ represents the horizontal direction in the surface $S$.
First note that despite the fact that the surface has a non trivial holonomy, for a given configuration $\cC$ it is possible to choose an orientation for each cylinder, for example by choosing an oriented path covering the graph representing the configuration. So in each cylinder we have a notion of bottom, up, left and right. Recall that thin cylinders are the one with each of their boundaries formed by a single saddle connection of holonomy $\pm v$, and so there is only one singularity on each of their boundaries. For these cylinders we can define the twist and the height of the cylinder as usual: starting from the only one singularity on the bottom of the cylinder, draw a vertical segment going up and ending at a point $P$ on the upper boundary of the cylinder. The length of this segment defines the height of the cylinder. Starting from the point $P$ and following the boundary in the right horizontal direction, we meet the singularity on the upper boundary of the cylinder, which is at distance $t$ from $P$, and $t$ defines the twist of the cylinder ($0\leq t< \vert v\vert$).
The next picture shows a particular case where the twist is ambiguous for a thick cylinder.

\begin{figure}[h!]
\centering
\includegraphics[scale=1]{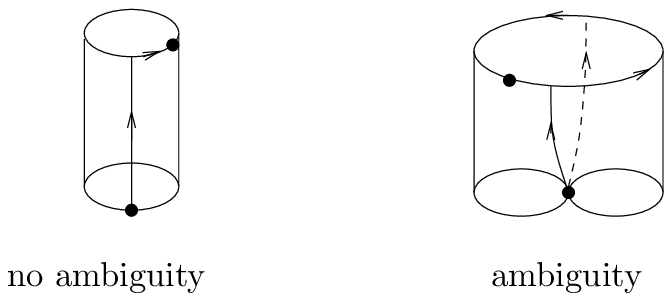}
\caption{Ambiguity for the definition of the twist}
\end{figure}

For the thick cylinders, we can define their twist as follows: for such a cylinder, if one of its boundaries contains two distinct singularities (recall that the singularities are labeled), then choose the one of the smaller index. We have now in each case one distinguished singularity on each of the two boundaries. Consider the shortest geodesic segments joining these two singularities (there might be two such segments). Then their vertical coordinates coincide and define the height $h$ of the cylinder, and their horizontal coordinate coincide modulo $\frac{2\vert  v \vert}{o_t}$, where $o_t=\vert\Gamma_{up}\vert\vee \vert\Gamma_{down}\vert$, and $\Gamma_{up}$ (resp. $\Gamma_{down}$) is the group of symmetries of the upper (resp. lower) boundary. In general for cylinders appearing in a configuration the orders of these groups are $1$ or $2$, so $o_t$ is equal to $1$ or $2$. In the example of the figure above, we have $ \vert\Gamma_{down}\vert=2 $, $ \vert\Gamma_{up}\vert=1 $ so $o_t=2$. So we define the twist as the value $t\in \left[ 0,  \frac{2\vert  v \vert}{o_t}\right)$ equal to the horizontal coordinates reduced modulo $\frac{2\vert  v \vert}{o_t}$. This ambiguity can appear only for thick cylinders having at least three or four boundary saddle connections, that is, cylinders of local type $ \circ 3.2 $ or $ \circ 4.2 $ in graphs of type $ c), d),$ or $ e) $ in the classification of Figure \ref{fig:fig3MZ} and Figure \ref{fig:fig6MZ}.

 We have $$\dd\nu_T=\dd hol(\hat{\gamma})\dd hol(\hat\delta_1) \dots \dd hol(\hat \delta_q).$$ 

Denote $n(q)$ the number of the cycles $\gamma, \delta_1, \dots, \delta_q$ in $H_1(S, \Sigma,\Z)$ that are not homologous to $0$ in $H_1(S, \Sigma,\Z)$. Taking care of the normalization (Convention \ref{convreseau}) we get: \begin{equation}\label{eq:dnut}\dd\nu_T=M_c\cdot\dd v\dd h_1\dots \dd h_q\dd t_1\dots \dd t_q\end{equation} with $M_c=4^{n(q)}$.

Note that with our choice of the basis, $\delta_1, \dots, \delta_q$ are always non homologous to zero. And $\gamma$ is homologous to zero if and only if  the associated graph of the configuration is of type $a$ in the classification of Masur and Zorich (Figure \ref{fig:fig3MZ}): in this case a vertex corresponding to a cylinder is separating the graph, and the boundary of any cylinder in the configuration consists of a single saddle connection (\^homologous to $\gamma$). So we have:
\begin{equation}M_c=\begin{cases}4^{q} \mbox{ if }\cC\mbox{ is of type }a\\ 4^{q+1} \mbox{ otherwise}\end{cases}\label{eq:M_c}\end{equation}

We choose to enumerate the cylinders such that the $q_1$ first cylinders have a waist curve of holonomy $\pm v$ and the $q_2$ remaining cylinders have a waist curve of holonomy $\pm 2 v$.

Consider now $\cT_1^\varepsilon$ the space of parameters of the cylinders with the additional constraint that the sum of the area of the \^homologous cylinders is normalized (i.e. equal to $1/2$) and that $|v|$ is bounded by $\epsilon$. Then the cone $C(\cT_1^\epsilon)$ underneath $\cT_1^\epsilon$ is given by the following equations:
\begin{eqnarray}\vert v\vert h\leq \frac{1}{2}\label{eq:area}\\
\vert v\vert \leq \varepsilon \sqrt{2\vert v\vert h}\label{eq:eps}\end{eqnarray}
where \[h=\sum_{k=1}^{q_1}h_k+2\sum_{k=1}^{q_2}h_{q_1+k}.\]

\subsubsection{Computation of $ c(\cC) $}\label{sssect:compc}
The volume of $ \cT_1^{\epsilon} $ is given by: $$\Vol(\cT_1^{\varepsilon})=\dim_{\R}(T)\nu_{T}(C(\cT_1^{\varepsilon}))=2(q+1)\nu_{T}(C(\cT_1^{\varepsilon}))$$
with \[\nu_{T}(C(\cT_1^{\varepsilon}))=\int_{C(\cT_1^{\epsilon})}\dd\nu_T\]
and $ \dd\nu_T $ given by (\ref{eq:dnut}).
Note that the measure $ \dd v $ on $ D_{\epsilon}/\pm $ disintegrates into $ w \cdot\dd w \cdot\dd\theta $ on $ [0,\epsilon]\times[0,\pi] $, and that integrating the measure of the twists $ \dd t_1\dots \dd t_q $ on $ [0,w)^{q_1}\times\displaystyle\prod_{i=q_1+1}^{q}\left[0,\cfrac{2w}{o_{t_i}}\right)  $ gives a factor $ \cfrac{2^{q_2}}{M_t}w^{q} $, with 
\begin{equation}M_t=\prod_{i=q_1+1}^{q}o_{t_i},\label{eq:M_t}\end{equation}
so we get:
\begin{equation*}\nu_{T}(C(\cT_1^{\varepsilon}))=M_c\pi\frac{2^{q_2}}{M_t}\int_{0}^{\frac{\epsilon}{2}}
w^{q+1}\dd w\int_{\R_{+}^{q}}  \chi{\left\{\cfrac{w}{2\varepsilon^2}\leq h\leq \cfrac{1}{2w}\right\}}  \dd h_1\dots \dd h_q.\end{equation*} 
With the following changes of variables $h_{q_1+k}'=2h_{q_1+k}$ we obtain:
\begin{equation*}\nu_{T}(C(\cT_1^{\varepsilon}))=\frac{M_c}{M_t}\pi\int_{0}^{\frac{\epsilon}{2}}
w^{q+1}\dd w\int_{\R_{+}^{q}}  \chi{\left\{\cfrac{w}{2\varepsilon^2}\leq h'\leq \cfrac{1}{2w}\right\}}  \dd h_1\dots \dd h_q'.\end{equation*} 
with \[ h'=\sum_{i=1}^{q_1}h_i+\sum_{i=1}^{q_2}h'_{q_1+i}.\]

Using the fact that \[\int_{\R_{+}^q}\chi\left\{a\leq \sum_{i=1}^{q} h_i \leq b\right\}\dd h_1\dots \dd h_q=\frac{1}{q!}(b^q-a^q),\]
since it is the difference of the volumes under two simplices in $ \R^q $, we obtain after computation:
\[\nu_{T}(C(\cT_1^{\varepsilon}))=\frac{M_c\pi\epsilon^2}{M_t2^{q+1}}\frac{q}{(q+1)!}\]

Thus: \[\Vol(\cT_1^{\varepsilon})=\frac{M_c\pi\varepsilon^2}{M_t2^{q}(q-1)!}.\]

We assume now that $\cQ(\alpha')$ is non empty, that is, the configuration $\cC$ is not made only by cylinders. Let $ S'\in\cQ_1(\alpha') $, then the rescaled surface $ r_S S' $ where $ 0< r_S\leq 1 $ has area $ \frac{r_S^2}{2} $. We define $ \Omega(\varepsilon, r_S) $ to be the subset of $ \cT $ formed by the cylinders rescaled such that gluing them to $ r_S S'$ after performing the appropriate surgeries gives a surface $ S\in C(\cQ_1^\varepsilon(\cC)) $. Note that the possible variations of area arising when performing the surgeries on $ r_S S' $ are negligible \cite{EMZ,MZ}. 

By definition $ \Omega(\varepsilon, r_S) $ is exactly formed by the rescaled surfaces $ r_T T $ where $ 0<r_T\leq 1 $, $ r_T^2+r_S^2\leq 1 $, and $ T\in\cT^{\tilde\varepsilon}_1 $, with $ \tilde\varepsilon=\varepsilon\sqrt{r_S^2+r_T^2} $. So we have, denoting $ Cusp(\varepsilon)=\Vol(\cT_1^{\varepsilon}) $,

\begin{eqnarray*}\nu_{T}(\Omega(\varepsilon,r_S))& = & \int_0^{\sqrt{1-r_S^2}}r_T^{2n_T-1}Cusp\left(\frac{\tilde\varepsilon}{r_T}\right)\dd r_T\\
& = & \frac{M_c\pi}{M_t2^{q}(q-1)!}\int_0^{\sqrt{1-r_S^2}}r_T^{2n_T-1}\varepsilon^2\frac{r_S^2+r_T^2}{r_T^2}\dd r_T\end{eqnarray*} with $n_T=\dim_{\C}(\cT)=q+1$, which simplifies:
\begin{equation}\nu_{T}(\Omega(\varepsilon,r_S))=\frac{M_c\pi\varepsilon^2}{M_t2^{q}(q-1)!}\int_0^{\sqrt{1-r_S^2}}r_T^{2q-1}(r_S^2+r_T^2)\dd r_T.\label{eq:nut}\end{equation}
After evaluation, we obtain: \[\nu_{T}(\Omega(\varepsilon,r_S))=\frac{M_c\pi\varepsilon^2}{M_t2^{q+1}(q+1)!}(1-r_S^2)^q(r_S^2+q).\]
Now if $ M_s $ denotes the number of ways to obtain a surface $ S\in C(\cQ_1^\varepsilon(\cC))$ by gluing $ r_T T \in\Omega(\varepsilon, r_S)$ to $ r_S S' \in\cQ(\alpha')$ (see (\ref{eq:M_s})), the total measure of the cone $ C(\cQ_1^\varepsilon(\cC)) $ is:
\begin{eqnarray}\mu(C(\cQ_1^\varepsilon(\cC)))&=&M_s\Vol(\cQ_1(\alpha'))\int_0^1 r_S^{2n_S-1}\nu_T(\Omega(\varepsilon,r_S))\dd r_S\label{eq:mu}\\ &=&\frac{M_sM_c\Vol(\cQ_1(\alpha'))\pi\varepsilon^2}{M_t2^{q+1}(q+1)!}\underbrace{\int_0^1 r_S^{2n_S-1}(r_S^2+q)(1-r_S^2)^q\dd r_S}_{I}\notag
\end{eqnarray}

An easy recurrence or a change of variables gives the following lemma:
\begin{lemma}\label{lem:J} $$J(a,q)=\int_0^1 r^{2a+1}(1-r^2)^q\dd r=\frac{1}{2}\frac{q!a!}{(a+q+1)!}$$
\end{lemma}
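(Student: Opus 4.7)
The plan is to reduce the integral to a standard Beta function evaluation via a single substitution. Set $u = r^2$, so that $\dd u = 2r\,\dd r$, and rewrite $r^{2a+1}\dd r = r^{2a}\cdot r\,\dd r = u^a\cdot \tfrac{1}{2}\dd u$. The bounds transform as $r=0\mapsto u=0$ and $r=1\mapsto u=1$, giving
\begin{equation*}
J(a,q) = \frac{1}{2}\int_0^1 u^a(1-u)^q\,\dd u = \frac{1}{2}B(a+1,q+1),
\end{equation*}
where $B$ denotes the Euler Beta function. Then invoke the classical identity $B(a+1,q+1) = \frac{a!\,q!}{(a+q+1)!}$ (valid for nonnegative integers $a,q$) to conclude.

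Alternatively, if one prefers to avoid citing the Beta-Gamma identity, I would prove the claim by induction on $q$. The base case $q=0$ is the immediate computation $\int_0^1 r^{2a+1}\dd r = \frac{1}{2(a+1)} = \frac{1}{2}\frac{a!\,0!}{(a+1)!}$. For the inductive step, integrate by parts taking $u = (1-r^2)^q$ and $\dd v = r^{2a+1}\dd r$, so that $\dd u = -2qr(1-r^2)^{q-1}\dd r$ and $v = \frac{r^{2a+2}}{2a+2}$. The boundary term vanishes at both endpoints (since $r=0$ kills $v$ and $r=1$ kills $u$), and one obtains the recursion $J(a,q) = \frac{q}{a+1}J(a+1,q-1)$, which together with the induction hypothesis yields the claimed formula.

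There is no real obstacle here: the lemma is essentially the Euler Beta integral, and either route takes a few lines. The only minor point is ensuring $a,q$ are nonnegative integers so that the factorial expression on the right-hand side makes sense, but this is automatic from the geometric context (dimensions of strata and numbers of cylinders) in which the lemma will be applied in equation~(\ref{eq:mu}).
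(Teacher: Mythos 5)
Your proof is correct, and it follows exactly the two routes the paper itself alludes to ("an easy recurrence or a change of variables"): the substitution $u=r^2$ reducing to the Euler Beta integral, and the integration-by-parts recursion $J(a,q)=\frac{q}{a+1}J(a+1,q-1)$. Nothing further is needed.
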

We recognize
$$I=J(n_S,q)+qJ(n_S-1,q).$$
After simplification we get:
$$I=\frac{(q+1)!(n_S-1)!}{2(n_S+q+1)!}(n_S+q).$$
So, denoting $M=\cfrac{M_sM_c}{M_t}$ we obtain:
$$\mu(C(\cQ_1^\varepsilon(\cC)))=M\pi\varepsilon^2\Vol(\cQ_1(\alpha'))\frac{(n_S-1)!(n_S+q)}{2^{q+2}(n_S+q+1)!}$$
As we have $$\Vol\cQ^\epsilon_1(\cC)=\dim_\R (\cQ(\alpha))\mu(C(\cQ_1^\epsilon(\cC)))$$ it follows from the definition of the Siegel--Veech constant that: $$c(\cC)=M\dim_\C(\cQ(\alpha))\frac{(n_S-1)!(n_S+q)}{2^{q+2}(n_S+q+1)!}\frac{\Vol \cQ_1(\alpha')}{\Vol \cQ_1(\alpha)}.$$
Recall that $\dim_{\C} \cQ(\alpha)=\dim_{\C} \cQ(\alpha')+\dim_{\C} \cT=n_S+q+1$. We obtain finally the formula (\ref{eq:cSV}) of Theorem \ref{th:ccyl}.


\subsubsection{Computation of $ c_{area}(\cC) $}\label{sssect:compcarea}
Here we want to compute $c_{area}(\cC)$, so we have to count surfaces with weight the area of cylinders with waist curve smaller than $ \varepsilon $, by definition. Note that, since there are $ q_1 $ cylinders of waist curve of length $ w=\vert  v\vert $ and $ q_2 $ of waist curve of length $ 2w $, if $ w\leq \frac{\varepsilon}{2} $ (when the area is renormalized), all cylinders count (with weight their area), and if $ \frac{\epsilon}{2}\leq w\leq \epsilon $, only the thin cylinders count (with weight their area). Equation (\ref{eq:eps}) contains two cases \begin{equation}\label{eq:domain1} w=\vert v\vert\leq \frac{\epsilon}{2}\sqrt{2\textrm{area}}\end{equation} and \begin{equation}\label{eq:domain2}\frac{\epsilon}{2}\sqrt{2\textrm{area}}\leq w\leq \epsilon\sqrt{2\textrm{area}}  \end{equation}  of different weights. So the domain of integration of $ C(T_1^{\epsilon}) $ splits into two parts as shown in the following picture. 

\begin{figure}[ht]
\centering
\includegraphics[scale=1]{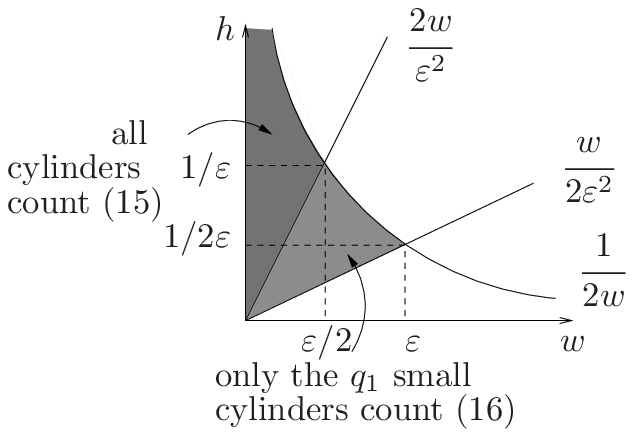}
\caption{Domain of integration}
\end{figure}
This gives the following weight function: \[W^{area}(w,h_i)=\begin{cases} \chi{\left\{\cfrac{2w}{\varepsilon^2}\leq h\leq \cfrac{1}{2w}\right\}}+\cfrac{\sum_{i=1}^{q_1}h_i}{h}  \chi{ \left\{ \cfrac{w}{2\varepsilon^2}\leq h\leq \cfrac{2w}{\epsilon^2} \right\} } \quad\textrm{if}\; w\leq\cfrac{\epsilon}{2},\notag\\
\cfrac{\sum_{i=1}^{q_1}h_i}{h} \chi{ \left\{ \cfrac{w}{2\varepsilon^2}\leq h\leq \cfrac{1}{2w} \right\} }  \quad\textrm{if}\; \cfrac{\epsilon}{2}\leq w\leq \epsilon .\notag \end{cases}\]

Now the weighted volume of $ \cT_1^{\epsilon} $ is given by: $$\Vol^{area}(\cT_1^{\varepsilon})=\dim_{\R}(T)\nu_{T}^{area}(C(\cT_1^{\varepsilon}))=2(q+1)\nu_{T}^{area}(C(\cT_1^{\varepsilon}))$$
with \[\nu_{T}^{area}(C(\cT_1^{\varepsilon}))=\int_{C(\cT_1^{\epsilon})}W^{area}(\vert v\vert, h_i)\dd\nu_T\]
and $ \dd\nu_T $ given by (\ref{eq:dnut}).

Following step by step the computations of the last paragraph, using the same change of variables, we have
\begin{multline*}\nu_{T}^{area}(C(\cT_1^{\varepsilon}))=\frac{M_c}{M_t}\pi\left[\int_{0}^{\frac{\epsilon}{2}}
w^{q+1}\dd w\int_{\R_{+}^{q}} \left(\chi{\left\{\cfrac{2w}{\varepsilon^2}\leq h'\leq \cfrac{1}{2w}\right\}} \right.\right.\\\left.  +\cfrac{\sum_{i=1}^{q_1}h_i}{h'}  \chi{ \left\{ \cfrac{w}{2\varepsilon^2}\leq h'\leq \cfrac{2w}{\epsilon^2} \right\} }\right)  \dd h_1\dots \dd h_q'  \\ \left. + \int_{\frac{\epsilon}{2}}^{\epsilon}w^{q+1}\dd w \int_{\R_{+}^{q}}\cfrac{\sum_{i=1}^{q_1}h_i}{h'} \chi{ \left\{ \cfrac{w}{2\varepsilon^2}\leq h'\leq \cfrac{1}{2w} \right\} } \dd h_1\dots \dd h_q'\right].\end{multline*} 

with \[ h'=\sum_{i=1}^{q_1}h_i+\sum_{i=1}^{q_2}h'_{q_1+i}.\]

Note that, since the variables $ h_i $ play symmetric roles, we have: 
\[\int_{\R_{+}^{q}}\cfrac{\sum_{i=1}^{q_1}h_i}{\sum_{i=1}^{q}h_i}\chi\left\{a\leq \sum_{i=1}^{q}h_i\leq b\right\}\dd h_1\dots\dd h_q=\cfrac{q_1}{q}\int_{\R_{+}^{q}}\chi\left\{a\leq \sum_{i=1}^{q}h_i\leq b\right\}\dd h_1\dots \dd h_q.\]
So computations are similar to the previous ones, and we obtain: 
\[\Vol^{area}(\cT_1^{\varepsilon})=\frac{M_c\pi\varepsilon^2}{M_t2^{q+2}q!}(4q_1+q_2).\]
Assume that $ \cQ(\alpha') $ is not empty. Now in (\ref{eq:nut}) we have to multiply the integrand by the ratio of the area of the cylinders by the total area of the surface $\frac{r_T^2}{r_S^2+r_T^2}$. We obtain: \begin{eqnarray}\nu^{area}_T(\Omega(\varepsilon,r_S)) &=\frac{M_c\pi\varepsilon^2(4q_1+q-2)}{M_t2^{q+2} q!}\int_0^{\sqrt{1-r_S^2}}r_T^{2q+1}\dd r_T\notag\\
&=\frac{M_c\pi\varepsilon^2(4q_1+q_2)}{M_t2^{q+2} q!}\frac{(1-r_S^2)^{q+1}}{2(q+1)}.\notag\end{eqnarray}
 Then: $$\mu^{area}(C(Q_1^\epsilon(\cC)))=M\Vol Q_1(\alpha')\frac{\pi\varepsilon^2(4q_1+q_2)}{2^{q+3}(q+1)!}\int_0^1(1-r_S^2)^{q+1}r_S^{2n_S-1}\dd r_S.$$
 Using again Lemma \ref{lem:J} we obtain: $$\mu^{area}(C(Q_1^\epsilon(\cC)))=M\Vol \cQ_1(\alpha')\frac{\pi\varepsilon^2(4q_1+q_2)}{2^{q+4}}\frac{(n_S-1)!}{(n_S+q+1)!}.$$
 So at the end we have: \begin{equation}c_{area}(\cC)=M\frac{4q_1+q_2}{2^{q+4}}\frac{(\dim_{\C} \cQ(\alpha')-1)!}{(\dim_{\C} \cQ(\alpha)-1)!}\frac{\Vol \cQ_1(\alpha')}{\Vol \cQ_1(\alpha)}.\label{careaC}\end{equation}
 
 Comparing to equation (\ref{eq:cSV}) and (\ref{eq:ccylc}) we obtain the relation (\ref{eq:careaandc}), which ends the proof of Theorem \ref{th:ccyl}.

\subsubsection{Special case}\label{sssection:specialcase} Assume that $\cQ(\alpha')$ is empty that is, the configuration is made only by cylinders. This arises only on strata $ \cQ(-1^ 4) $, $ \cQ(2, -1^ 2) $ and $ \cQ(2,2) $. Then the computations are much easier. Indeed we have in this case
  \[\Vol\cQ_1^\varepsilon(\cC)=\Vol \cT_1^\varepsilon=\frac{M_c\pi\varepsilon^2}{M_t2^{q}(q-1)!}\]
   and
    \[\Vol^{area} \cQ_1^\varepsilon(\cC)=\Vol^{area} \cT_1^\varepsilon=\frac{M_c\pi\varepsilon^2}{M_t2^{q+2}q!}(4q_1+q_2)\]
so, since the ratio of the area of the cylinders over the total area is $1$, we obtain the following proposition.
\begin{proposition} If the configuration $ \cC $ is only made by cylinders, then
 \begin{eqnarray} c(\cC)=\frac{M_c}{M_t2^{q+1}(q-1)!\Vol \cQ_1(\alpha)}\notag\\
 c_{cyl}(\cC)=\cfrac{4q_1+q_2}{4}c(\cC)\notag\\
 c_{area}(\cC)=\frac{1}{q}c_{cyl}(\cC)\label{eq:completecareacyl}\end{eqnarray}
  \end{proposition}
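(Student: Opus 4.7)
The plan is to apply the Siegel--Veech formula \eqref{eq:SVbase} directly, exploiting the fact that in this special case the whole surface is filled by the cylinders of the configuration. Since $\cQ(\alpha')=\emptyset$, there is no complementary boundary surface $S'$ to be glued back in, and the multi-step disintegration used in \S\ref{sssect:compc}--\S\ref{sssect:compcarea} collapses: the cone $C(\cQ_1^\varepsilon(\cC))$ coincides (up to the negligible $o(\varepsilon^2)$-thin part) with $\cT_1^\varepsilon$, so one may feed the stated values of $\Vol\cT_1^\varepsilon$ and $\Vol^{area}\cT_1^\varepsilon$ straight into \eqref{eq:SVbase}.

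First I would derive $c(\cC)$. Substituting $\Vol\cQ_1^\varepsilon(\cC)=\Vol\cT_1^\varepsilon=\frac{M_c\pi\varepsilon^2}{M_t 2^q(q-1)!}$ into \eqref{eq:SVbase} gives
\[
c(\cC)=\lim_{\varepsilon\to 0}\frac{1}{2\pi\varepsilon^2}\cdot\frac{1}{\Vol\cQ_1(\alpha)}\cdot\frac{M_c\pi\varepsilon^2}{M_t 2^q(q-1)!}=\frac{M_c}{M_t\,2^{q+1}(q-1)!\,\Vol\cQ_1(\alpha)},
\]
which is the first identity. The analogous computation with $\Vol^{area}\cT_1^\varepsilon=\frac{M_c\pi\varepsilon^2(4q_1+q_2)}{M_t 2^{q+2}q!}$ yields
\[
c_{area}(\cC)=\frac{M_c(4q_1+q_2)}{M_t\,2^{q+3}q!\,\Vol\cQ_1(\alpha)}.
\]
Here I would recall from \S\ref{sssect:compcarea} that the weight used in $\Vol^{area}$ is the ratio of the area of the counted cylinders to the total area of the surface; in the present situation this ratio equals $1$ because the cylinders fill all of $S$, which is precisely what makes the auxiliary factor $r_T^2/(r_S^2+r_T^2)$ of the general computation drop out.

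The middle identity $c_{cyl}(\cC)=\frac{4q_1+q_2}{4}c(\cC)$ requires no new work: it is already \eqref{eq:ccylc}, whose derivation from the elementary cylinder-counting identity \eqref{eq:Ncyl} never used the presence of a boundary stratum. Finally, dividing $c_{area}(\cC)$ by $c_{cyl}(\cC)=\frac{M_c(4q_1+q_2)}{M_t\,2^{q+3}(q-1)!\,\Vol\cQ_1(\alpha)}$ cancels all the combinatorial constants and leaves the factor $(q-1)!/q!=1/q$, establishing \eqref{eq:completecareacyl}. This is consistent with the general ratio \eqref{eq:careaandc}, since in this case $\dim_\C\cQ(\alpha)=\dim_\C\cT=q+1$. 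No substantive obstacle arises; the only point requiring care is the bookkeeping of the normalizing constants $M_c$, $M_t$ from \eqref{eq:M_c}--\eqref{eq:M_t}, together with the observation (already noted) that only the three sporadic strata $\cQ(-1^4)$, $\cQ(2,-1^2)$, $\cQ(2,2)$ admit configurations made entirely of cylinders, so the proposition has genuine but limited content.
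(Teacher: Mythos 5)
Your proposal is correct and follows essentially the same route as the paper: since $\cQ(\alpha')=\emptyset$ one has $\Vol_*\cQ_1^{\varepsilon}(\cC)=\Vol_*\cT_1^{\varepsilon}$ (the area ratio being $1$), and plugging the stated values of $\Vol\cT_1^\varepsilon$ and $\Vol^{area}\cT_1^\varepsilon$ into \eqref{eq:SVbase}, together with the already-established relation \eqref{eq:ccylc}, yields the three identities. Your bookkeeping of $M_c$, $M_t$ and the final cancellation giving the factor $1/q$ agree with the paper's computation.
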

 
 \subsection{Volume of the boundary strata}

Consider a stratum $\cQ(\alpha)=\prod_{i=1}^m \cQ(\alpha_i) $ of disconnected flat surfaces. Following the notations of \cite{AEZ} and generalizing the result of 4.4 we obtain the following lemma:

\begin{lemma}
$$\Vol \cQ_1(\alpha)=\frac{1}{2^{m-1}}\frac{\prod (\dim_{\C}\cQ(\alpha_i)-1)!}{(\dim_{\C}\cQ(\alpha)-1)!}\prod_{i=1}^m\Vol \cQ_1(\alpha_i)$$
\end{lemma}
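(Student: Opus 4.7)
The plan is to reduce the volume computation to a Dirichlet integral by disintegrating the product measure into radial and normalized components, one factor at a time.

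First I would apply the cone definition of $\mu_1$: writing $d=\dim_\C\cQ(\alpha)=\sum d_i$ with $d_i=\dim_\C\cQ(\alpha_i)$, we have $\Vol\cQ_1(\alpha)=2d\cdot\mu(C(\cQ_1(\alpha)))$. Since the ambient measure factors as $\mu=\prod_{i=1}^m\mu_i$, I would disintegrate each $\mu_i$ separately using Convention \ref{convarea}: any $S_i\in\cQ(\alpha_i)$ can be written as $S_i=r_i\tilde S_i$ with $\tilde S_i\in\cQ_1(\alpha_i)$, and $d\mu_i=r_i^{2d_i-1}\,dr_i\,d\mu_{1,i}(\tilde S_i)$. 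Because the area of $S_i$ is $r_i^2/2$, the condition that $S=(S_1,\dots,S_m)$ has total area at most $1/2$ becomes $\sum_i r_i^2\le 1$. Thus
\begin{equation*}
\mu(C(\cQ_1(\alpha)))=\left(\prod_{i=1}^m\Vol\cQ_1(\alpha_i)\right)\int_{\substack{r_i\ge 0\\ \sum r_i^2\le 1}}\prod_{i=1}^m r_i^{2d_i-1}\,dr_i.
\end{equation*}

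Next I would evaluate the radial integral by the change of variables $u_i=r_i^2$ so that $r_i^{2d_i-1}\,dr_i=\tfrac12 u_i^{d_i-1}\,du_i$. This converts the integral to a standard Dirichlet/Liouville integral over the solid simplex $\{u_i\ge 0,\ \sum u_i\le 1\}$, whose value is $\prod_i(d_i-1)!/d!$. Hence
\begin{equation*}
\mu(C(\cQ_1(\alpha)))=\frac{1}{2^m}\cdot\frac{\prod_{i=1}^m(d_i-1)!}{d!}\cdot\prod_{i=1}^m\Vol\cQ_1(\alpha_i).
\end{equation*}

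Multiplying by $2d$ to convert back to $\Vol\cQ_1(\alpha)$ yields exactly the claimed formula, since $2d/(2^m d!)=1/(2^{m-1}(d-1)!)$. No step presents a real obstacle; the only point that demands a little care is making sure the area of a rescaled surface $r\tilde S$ (where $\tilde S$ has area $1/2$) is $r^2/2$ and not $r^2$, so that the constraint $\{S\in C(\cQ_1(\alpha))\}$ becomes $\sum r_i^2\le 1$ rather than $\sum r_i^2\le 2$. With the correct normalization fixed by Convention \ref{convarea}, the Dirichlet integral gives the result directly and reproduces, when specialized to $m=2$, the case already treated in \S~4.4 of \cite{AEZ}.
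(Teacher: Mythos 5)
Your computation is correct: disintegrating each factor's Masur--Veech measure into radial and unit-area parts, reducing the cone volume to the Dirichlet integral over $\{\sum r_i^2\le 1\}$, and multiplying by $2d$ gives exactly the stated formula, with the area normalization of Convention~\ref{convarea} handled correctly. This is essentially the paper's (implicit) argument, which states the lemma as the direct generalization of the two-component computation of \S~4.4 of \cite{AEZ}; your $m$-fold Liouville integral is just that generalization done in one step.
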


Let $  \cH_{r}(\alpha) $ be the hyperboloid of surfaces of area $ r $ in the Abelian stratum $ \cH(\alpha) $. We have the following relation between hyperboloids in this case:
\begin{lemma}\label{lem:H1/2}
$$\Vol \cH_{1/2}(\alpha)=2^{\dim_{\C} \cH(\alpha)}\Vol \cH_1(\alpha)$$
\end{lemma}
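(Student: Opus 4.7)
The plan is to derive this equality by a straightforward scaling argument, paralleling the radial computations already carried out in \S\ref{sssect:compc} but now applied to an abelian stratum.

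First, I would invoke the homogeneity of the Masur--Veech measure $\mu$ on $\cH(\alpha)$ under the dilation $\phi_\lambda : S \mapsto \lambda S$. In period coordinates $H^1(S,\Sigma;\C)$, which have complex dimension $d = \dim_\C \cH(\alpha)$, the map $\phi_\lambda$ acts as $\C$-linear multiplication by $\lambda$, with real Jacobian $\lambda^{2d}$, so $\phi_\lambda^* \mu = \lambda^{2d}\mu$. Since the total area $a(S) = \tfrac{i}{2}\int \omega\wedge\bar\omega$ is a positive-definite Hermitian form in the periods, $a(\phi_\lambda S) = \lambda^2 a(S)$, and hence $\phi_\lambda$ restricts to a diffeomorphism $\cH_c(\alpha) \to \cH_{\lambda^2 c}(\alpha)$. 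Specialising to $\lambda = \sqrt{2}$ yields a canonical identification $\phi_{\sqrt{2}} : \cH_{1/2}(\alpha) \to \cH_1(\alpha)$.

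Second, I would compare the induced hypersurface measures by writing $\dd\mu$ in two radial disintegrations. Parameterising $S = r\tilde S$ with $\tilde S \in \cH_1$ gives $\dd\mu = r^{2d-1}\,\dd r\,\dd\mu_1$; parameterising $S = r'\tilde S'$ with $\tilde S' \in \cH_{1/2}$ gives $\dd\mu = (r')^{2d-1}\,\dd r'\,\dd\mu_{1/2}$. The two radial variables are related by $r' = \sqrt{2}\, r$ (since $a(S) = r^2 = (r')^2/2$), and a direct substitution yields $(r')^{2d-1}\,\dd r' = 2^d\, r^{2d-1}\,\dd r$. Equating the two expressions for $\dd\mu$ transfers this factor to the ratio of the cross-section measures under $\phi_{\sqrt{2}}$, and integrating over the hypersurfaces gives the stated identity $\Vol \cH_{1/2}(\alpha) = 2^{\dim_\C \cH(\alpha)}\,\Vol \cH_1(\alpha)$.

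The whole argument is elementary Jacobian bookkeeping; there is no new geometric content beyond the scaling symmetry of $\mu$. The only point requiring care is to respect the normalization convention for $\mu_c$ fixed in \S\ref{ssect:genstrat} (the cone convention $\mu_c(E) = 2d\,\mu(C(E))$, together with the lattice choice of Convention \ref{convreseau}), so that the factor $2^d$ lands on the $\cH_{1/2}$ side of the equality and in the stated direction.
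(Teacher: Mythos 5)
Your argument, as written, cannot give the stated identity, and in fact its own bookkeeping produces the \emph{reverse} relation. If both $\Vol \cH_1$ and $\Vol\cH_{1/2}$ are defined from one and the same measure $\mu$ via the radial disintegrations you write down, then equating $\dd\mu = r^{2d-1}\dd r\,\dd\mu_1 = (r')^{2d-1}\dd r'\,\dd\mu_{1/2}$ with $r'=\sqrt{2}\,r$ and $(r')^{2d-1}\dd r' = 2^{d}r^{2d-1}\dd r$ forces $\dd\mu_1 = 2^{d}\,\dd\mu_{1/2}$ under the radial identification $\phi_{\sqrt2}\colon \cH_{1/2}\to\cH_1$, i.e.\ $\Vol\cH_1(\alpha) = 2^{\dim_\C\cH(\alpha)}\Vol\cH_{1/2}(\alpha)$. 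This is also what it must be: the area-$1/2$ hyperboloid bounds the smaller cone, $\mu\{\mathrm{area}\le 1/2\} = 2^{-d}\mu\{\mathrm{area}\le 1\}$, so with a single fixed normalization its volume is $2^{-d}$ times that of $\cH_1$, not $2^{d}$ times larger. Your closing caveat, that one must arrange ``the factor $2^d$ to land on the $\cH_{1/2}$ side'', is precisely where the content of the lemma lies; with one measure the side is not a matter of choice, and pure scaling gives the wrong one.

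The missing ingredient is that the two sides of the lemma are computed with \emph{different} normalizations, and the discrepancy comes from Convention \ref{convreseau}. The space $\cH(\alpha)$ enters here as a trivial-holonomy component of the principal boundary of a quadratic stratum: for such a component the double cover consists of two disjoint copies, the lift $[\hat\gamma]=[\gamma']-[\gamma'']$ of a cycle has period equal to \emph{twice} the period of $\gamma$, so the lattice induced from $(H_1^-(\hat S,\hat\Sigma;\Z))^*_\C$ in the usual period coordinates $H^1(S_i,\Sigma_i;\C)\simeq\C^{d}$ is $\bigl(\tfrac12\Z\oplus\tfrac{i}{2}\Z\bigr)^{d}$, of covolume $4^{-d}$. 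Hence the measure defining $\Vol\cH_{1/2}(\alpha)$ is $4^{d}$ times the standard Masur--Veech measure (integer periods) which defines $\Vol\cH_1(\alpha)$. Combining the lattice factor $4^{d}$ with the genuine area-rescaling factor $2^{-d}$ that your computation correctly produces gives $4^{d}\cdot 2^{-d}=2^{d}$, i.e.\ the lemma; this is consistent with the value $\Vol\cH_{1/2}(0)=\tfrac{4\pi^2}{3}$ used for configuration $\cC_3$, against the standard $\Vol\cH_1(0)=\tfrac{\pi^2}{3}$. So the scaling symmetry is fine but insufficient: without the comparison of lattices your proof establishes the reversed identity, not the stated one.
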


So the final formula for a boundary stratum $\cQ(\alpha')=\prod \cH(\alpha_i)\prod\cQ(\beta_j)$ with $m$ connected components is: 

\begin{equation}c_{area}(\cC)=M\frac{4q_1+q_2}{2^{m+q+3}}\frac{\prod_i(a_i-1)!2^ {a_i}\Vol \cH_1(\alpha_i)\prod_j(b_j-1)!\Vol \cQ_1(\beta_j)}{(\dim_{\C} \cQ(\alpha)-1)!\Vol \cQ_1(\alpha)}\label{eq:completecareaC}
\end{equation} where $a_i=\dim_{\C} \cH(\alpha_i)$ and $b_j=\dim_{\C} \cQ(\beta_j)$.
\subsection{Evaluation of $M_s$}

The general formula for $M_s$ is given by:
\begin{equation}M_s=\frac{K}{|\Gamma(\cC)|}\label{eq:M_s}\end{equation}
For each surface $S_i$ in the principal boundary, the number of geodesic rays coming from a boundary singularity on $S_i$ can be read on the local ribbon graph representing $S_i$: each boundary singularity is represented by a connected component of the local ribbon graph, summing the orders $k_{i_j}$ along this connected component gives the number of geodesic rays emerging form this singularity. If the surface as several boundary singularities, then one has to multiply the number of geodesic rays obtained for each of them, to get the combinatorial constant responsible for the gluing of $S_i$ in the configuration. Multiply the numbers obtained for each $S_i$ to get the final combinatorial constant $K$. Note that for surfaces of trivial holonomy the surgeries are made on rays pointing in the same direction, so there are less choices for the $ k_i $'s.

So for surfaces of non-trivial holonomy the constant $ K $ is given by the formula:
\[K=\prod_{bound. comp.} \sum_{k_i\; in\; b.c.} k_i, \]

and for surfaces of trivial holonomy the constant $ K $ is given by the formula:
\[K=2\prod_{bound. comp.} \sum_{k_i \; in\; b.c.} \cfrac{k_i}{2}. \]

$\Gamma(\cC)$ denotes the symmetries of the configuration $\cC$ that generalize the $ \gamma\mapsto -\gamma $ symmetry in the Abelian case. 
$$|\Gamma(\cC)|=\prod_i|\Gamma(S_i)|$$
and \[|\Gamma(S_i)|=\begin{cases}2 & \mbox{ if } S_i\mbox{ is a torus of trivial holonomy}\\2 & \mbox{ if } S_i\mbox{ is in a connected hyperelliptic stratum  and the surgery}\\ &\mbox{ applies to one or two fix points of the hyperellptic involution}\\
&\mbox{ or to two points exchanged by the involution}\\
1 & \mbox{ otherwise}\end{cases}\]

\subsection{Counting configurations}\label{sect:count}
Recall that by convention, all zeros and poles are numbered, so several configurations can share the same type $ \cC $ due to this labeling.

For each type of configuration $ \cC $, denote $ N(\cC) $ the number of configurations of this type: two configurations sharing the same type will be distinct if the label of one of the newborn singularities or  the subset of labels of interior singularities of one of the boundary surfaces differ in the two configurations.

For a connected stratum $ \cQ(\alpha) $ (or a connected component of a stratum), we have \[c_{area}(\cQ(\alpha))=\sum_{admissible\; \cC}N(\cC)c_{area}(\cC).\]
Recall that types of configurations identify with decorated global ribbon graphs embedded in the sphere described by Definition 3 of \cite{MZ}. Unless such a graph presents a {\it decorated ribbon graph symmetry}, there is a well-defined way to label the connected components of the ribbon graph and the boundary surfaces. 

We define a {\it decorated ribbon graph symmetry} as a symmetry of ribbon graph which preserves the decorations, that correspond here to the type of boundary surfaces ($ \oplus $ or $ \ominus $), the boundary singularities $ k_i $ and the set of interior singularities for each boundary surface. In the case of configurations these symmetries correspond to rotations of angle $ \pi $ of the sphere that the ribbon graphs are embedded in, so they are of order 2.
As an example, the following type of configuration possesses this symmetry.
\begin{figure}[h!]
\centering
\includegraphics[scale=1]{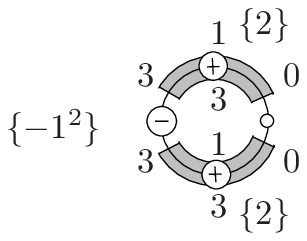}
\caption{Decorated ribbon graph symmetry}
\end{figure}
For types of configurations that do not posses this symmetry, $ N(\cC) $ is evaluated as follows. For a stratum $ \cQ(\alpha) $ with $ \alpha=\{d_1^{a_1}, \dots ,  d_m^{a_m}\}$, let $ \cC $ be a configuration without symmetry. Then we can label all boundary surfaces (say there are $ r $ such surfaces). For the boundary surface $ S_j $, let $ D^{int}_j=\{d_1^{a_1^j}, \dots, d_m^{a_m^j}\} $ denote the set of interior singularity orders of  $S_j$. We can also label the connected components of the ribbon graph (say there are $ s $ such components). Recall that each connected component of the ribbon graph corresponds to one or two newborn singularities (depending of the number of components of its boundary). For the $ k $-th connected component of the ribbon graph let $ D^{nb}_k=\{d_1^{b_1^k}, \dots d_m^{b_m^k}) $ denote the set of singularity orders of the newborn zeros (note that $ b_i^k \leq 2$). We have $ a_i=\sum_j a_i^j+\sum_k b_i^k $ and the number of ways to give names to the singularities is 
\begin{equation}N(\cC)=\prod_{i=1}^m \cfrac{a_i!}{\prod_{j=1}^r a_i^j !\cdot \prod_{k=1}^s b_i^k !}=\prod_{i=1}^m {a_i\choose a_i^1, \dots, a_i^r, b_i^1,\dots,b_i^s} .\label{eq:N}\end{equation}
For configurations that possess the symmetry, we have to divide this number by 2 if the symmetry acts non trivially on the connected components of the ribbon graph or on the boundary surfaces possessing interior singularities. If the symmetry stabilizes the connected components of the ribbon graph and the boundary surfaces, but acts non trivially on the cylinders, we also have to divide this number by 2 to take into account that there is no canonical numbering of the cylinders here (cf \S\ref{Q8} for an example).

On the previous example, the symmetry preserves the boundary surface $ \ominus $ but exchanges the two surfaces $ \oplus $ possessing an interior singularity of order $ 2 $. Here $ \alpha=\{-1^2, 2^2, 9^2\} $.
Thus for this type of configuration $ N(\cC) $ is given by
\[N(\cC)=\cfrac{1}{2}\cdot \cfrac{2!}{2!0!0!}\cdot\cfrac{2!}{0!1!1!}\cdot \cfrac{2!}{0!0!0!}=2.\]

\section{Strata $\cQ(1^k, -1^l)$, with $k-l=4g-4\geq 0$}\label{sect:princ}
The strata $\cQ(1^k, -1^l)$ are particularly interesting for two reasons. First, they correspond to strata of maximal dimension at genus and number of poles fixed. Second, their boundary strata belong to the same family, so that gives recursion formulas for Siegel--Veech constants and volumes. 

The strata $\cQ(1^2, -1^2)$ and $\cQ(1^4)$ are hyperelliptic and will be studied in \S~\ref{sect:volhyp}. In the general case there are only four types of configurations, so we give here their complete description and apply the formula for the Siegel--Veech constant $c_{area}(\cC)$ to each of them.
\subsection{Configurations}
\begin{proposition}There are only four types of configurations that contain cylinders for strata $\cQ(1^k, -1^l)$, they are described in Figure \ref{fig:confprincipal}.
\end{proposition}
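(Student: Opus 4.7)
The plan is to feed the numerical constraints of the stratum $\cQ(1^k,-1^l)$ into the Masur--Zorich classification of admissible configurations recalled in \S\ref{ssect:graph}, and then perform a case-by-case elimination. Since configurations with cylinders correspond to decorated graphs from Figure~\ref{fig:fig3MZ} containing at least one $\circ$-vertex, decorated at each non-cylinder vertex by a local ribbon graph from Figure~\ref{fig:fig6MZ}, it suffices to list all such decorated graphs and discard those inconsistent with the stratum.

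First I would unwrap the constraints imposed by $\cQ(1^k,-1^l)$. Every pre-existing zero has cone angle $3\pi$, so at each pre-existing zero we must have $\sum_i(k_i+1)=3$, which forces the multiset $\{k_i\}$ of sector labels meeting that zero to be one of $\{2\}$, $\{1,0\}$ or $\{0,0,0\}$; every pre-existing pole is met by a single sector with $k=0$. The newborn singularities are constrained by the same list: going around a boundary component of the global ribbon graph, the sum $\sum_i(k_i+1)$ must be $1$ (simple pole) or $3$ (simple zero), since these are the only orders appearing in~$\alpha$. In particular no $k_i$ can exceed~$2$. Finally the interior orders carried by each boundary surface must all lie in $\{-1,1\}$, which forbids any $\oplus$-vertex whose interior surface would belong to a Hurwitz stratum $\cH(\beta)$ with nonempty~$\beta$ (all odd-order zeros lift to odd zeros upstairs), and forces every $\ominus$-vertex to lie in some $\cQ(1^{k'},-1^{l'})$; the global non-trivial holonomy of the surface guarantees at least one $\ominus$-vertex.

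Next I would run through the admissible global graphs of types $a$--$e$ from Figure~\ref{fig:fig3MZ} that contain a $\circ$. For each such graph I would enumerate the decorations by local ribbon graphs compatible with the constraints above, matching sector labels across every edge and reading off both the interior orders at each boundary vertex and the newborn orders at each boundary component of the global ribbon graph. The stringent upper bound $k_i\le 2$ and the requirement that each newborn sum equal $1$ or $3$ eliminate most candidate decorations: for instance, the thick cylinder local types $\circ 3.2$ and $\circ 4.2$ are quickly ruled out because joining them to any $\ominus$-local graph in Figure~\ref{fig:fig6MZ} produces a newborn sector sum in $\{4,5,6,\ldots\}$, hence a newborn zero of order $\ge 2$, which is forbidden. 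A similar argument rules out the ``branching'' local types at $\ominus$-vertices.

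The main obstacle will be being truly exhaustive through the tree-type graph $a$ and the loop-type graphs $b$--$e$, keeping consistent bookkeeping for labelings of both cylinders and newborn zeros and noting when seemingly distinct decorations are identified by a decorated-ribbon-graph symmetry in the sense of \S\ref{sect:count}. Once the elimination is carried out, I expect exactly four surviving decorated graphs: two single-cylinder configurations on a tree-like graph (one with a $\ominus$-vertex on each side of the cylinder, one with both sides attached to the same $\ominus$-vertex), and two further configurations using either a second cylinder or a loop through a single $\ominus$-vertex. These are the four pictures of Figure~\ref{fig:confprincipal}, and the proposition follows.
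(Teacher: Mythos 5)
Your general strategy --- impose that newborn and interior singularities must have order $1$ or $-1$, then eliminate decorated Masur--Zorich graphs case by case --- is the same as the paper's, but the concrete eliminations you propose are wrong and would lead to the wrong list. The claim that the thick-cylinder local types $\circ 3.2$ and $\circ 4.2$ are ruled out because attaching them to a $\ominus$ local graph forces a newborn zero of order $\geq 2$ is false: configuration $\cC_3$ of Figure \ref{fig:confprincipal} contains exactly such a thick cylinder, with three boundary saddle connections, two of them glued to a torus of $\cH(0)$ (surgery $+2.1$, breaking up the marked point, which creates two cone angles $\pi$) and one glued to a surface of $\cQ(1^{k-3},-1^{l+1})$ along a hole created adjacent to a pole (type $-1.1$); at each junction the cone angle is $\pi+2\pi=3\pi$, i.e.\ a newborn zero of order exactly $1$, not $\geq 2$. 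Likewise $\cC_4$ has a thick cylinder one of whose boundaries is pinched, producing two newborn poles. Correspondingly, your anticipated outcome is not the true one: each of the four configurations has exactly one cylinder (there is no two-cylinder configuration), and you omit precisely the two mechanisms the paper isolates --- the $\oplus$ torus $\cH(0)$ attached along two saddle connections, and the pinching of a cylinder boundary (Figures \ref{fig:creating1} and \ref{fig:creatingpole}).

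Two further gaps. The inference that the non-trivial holonomy of $S$ guarantees a $\ominus$ vertex is unjustified: holonomy can be produced by the gluings alone, as the appendix shows by exhibiting configurations made only of tori and cylinders for strata with even zeros; the correct tool is Lemma \ref{lem:odd:zeros} (odd newborn zeros come from $\ominus$ surfaces or from loops in the graph), which you would still need. And while you correctly list the three ways to realize $\sum_i(k_i+1)=3$, namely $\{2\}$, $\{1,0\}$, $\{0,0,0\}$, you never eliminate the first and third, which is the heart of the paper's argument: the case $\{2\}$ occurs only for a loop with a single vertex and then yields either an extra newborn zero of higher order or no cylinder at all, and the case $\{0,0,0\}$ occurs only around vertices of type $+3.1$, where the boundary orders are odd. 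Without these eliminations, and with the erroneous ones you do make, the run through graphs $a)$--$e)$ cannot terminate in the four configurations of Figure \ref{fig:confprincipal}.
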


\begin{figure}
\centering
\includegraphics[scale=1]{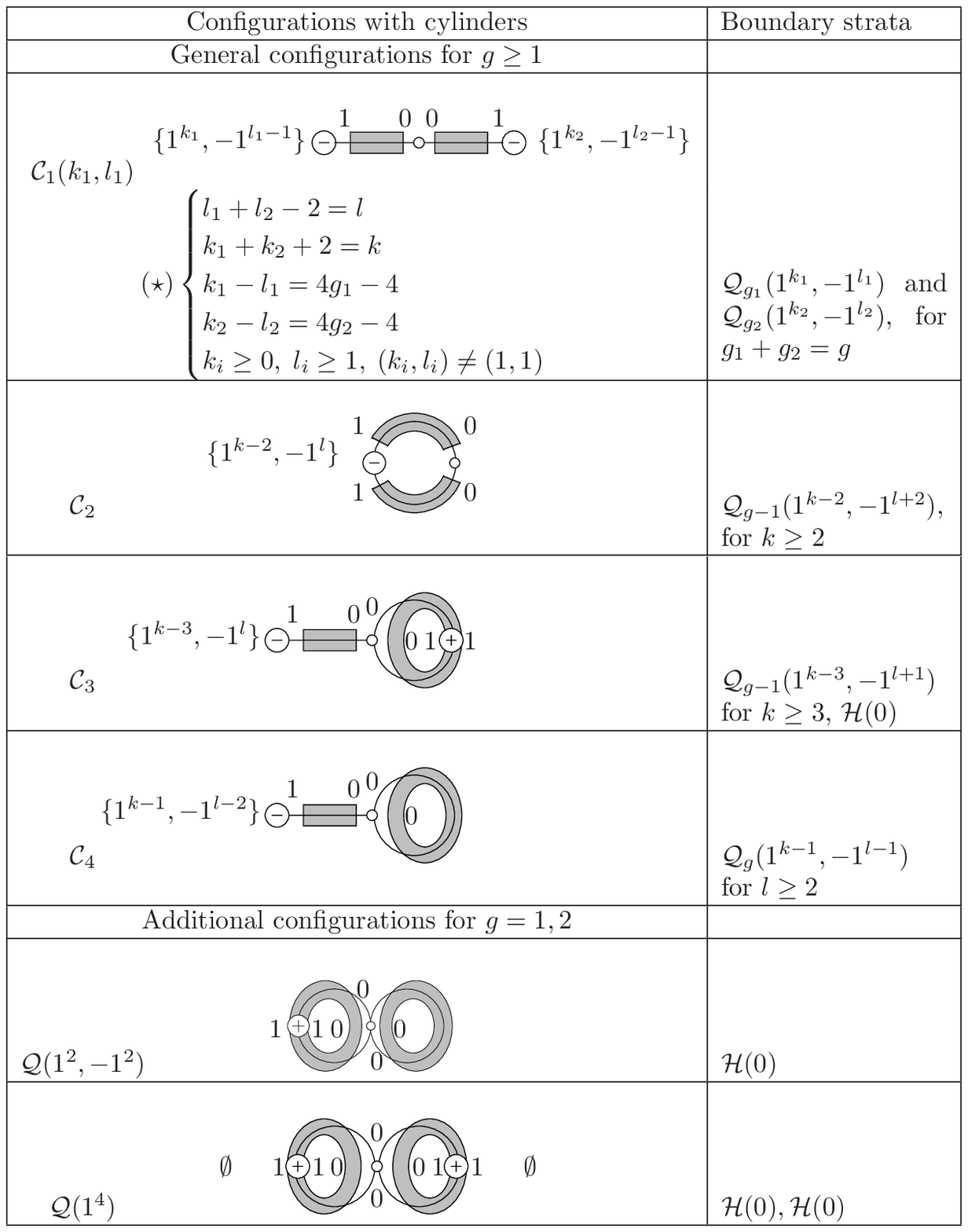}
\caption{Configurations containing cylinders for strata $\cQ(1^k, -1^l)$, with $k_l=4g-4$ and $g\geq 1$.}\label{fig:confprincipal}
\end{figure}

\begin{proof} We recall that graphs representing configurations are classified by Theorem 2 in \cite{MZ}. Then the proof is based on the observation that there not many ways to create zeros of order $1$ or poles (see also Lemma \ref{lem:odd:zeros} in \S~\ref{sect:geom}). We recall that the order of a newborn zero is given by the formula $\sum (k_i+1)-2$ where  the $k_i$ are the orders of the boundary singularities along the boundary component of the ribbon graph that corresponds to the newborn zero (see \S~1.4 of \cite{MZ} for more details), and we have $k_i\geq 0$. A boundary component admits at least one boundary singularity. 
So there is only one possibility for a pole: there is only one boundary singularity, which is equal to 0. For a zero of order 1 there are 3 possibilities: 
\begin{itemize}
\item one boundary singularity of order 2,
\item two boundary singularities of order 1 and 0,
\item three boundary singularities of order 0.
\end{itemize}
The first case is realizable when the global graph representing the configuration contains a loop with only one vertex. But in this case we can see that either there will be another newborn zero of higher order, or there will be no cylinders in the configuration.
The third case can also be eliminated because boundary components with exactly three boundary singularities arise only around vertices of type $+3.1$ in the graph, and the parities of the boundary singularities in this case are odd.

So the only remaining possibility is the second one. We can reformulate this discussion by saying that there is only one way to get a cone angle $3\pi$: one has to glue a cone angle $\pi$ with a cone angle $2\pi$. Looking carefully at all the ways to have boundary singularities of order 1 or 0 in the local ribbon graphs and the consequence on the boundary components in the global graph, we reduce the case to only two possibilities: the boundary singularity of order 0 arises only as a cone angle around points on the boundary of a cylinder, and the one of order 1 arises either by creating a hole adjacent to a pole in a surface of non trivial holonomy (i.e. for vertices of type $-1.1$ and $-2.2$), or by breaking up a marked point on a surface of trivial holonomy (i.e. for vertices of type $+2.1$). Note that the last surgery creates two points of cone angle $\pi$, so gluing each of them to a cylinder will create two newborn zeros. 

This situation is illustrated in the following pictures (Figure \ref{fig:creating1}).

\begin{figure}[h!]
\centering
\includegraphics[scale=1]{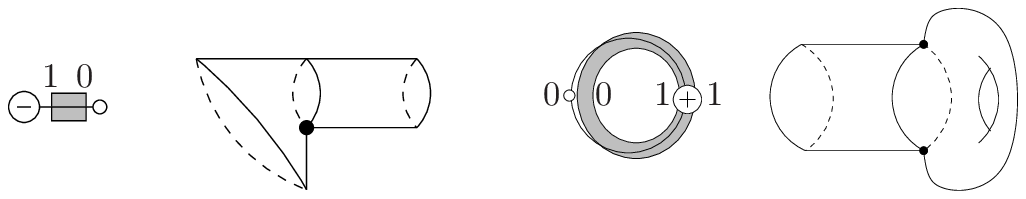}

\caption{Newborn zeros of order 1}\label{fig:creating1}
\end{figure}

For a pole, similar considerations give that there is only one way to get a pole (and not creating zeros of order $\ge 2$), by pinching the boundary of a cylinder (Figure \ref{fig:creatingpole}).

Note that, since the interior singularities are zeros of order $1$ or poles, the only boundary strata are $\cH(0)$ and $\cQ(1^{K}, -1^{L})$.

These remarks allow us to eliminate most of the configurations, and to keep only the four possible types of configurations described on Figure \ref{fig:confprincipal}.
\begin{figure}[h!]
\centering
\includegraphics[scale=1]{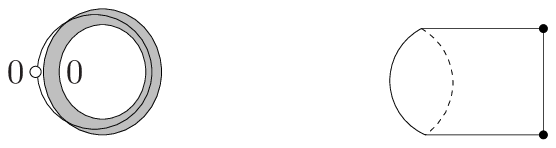} 
\caption{Newborn poles}\label{fig:creatingpole}
\end{figure}
\end{proof}

As an example, Table \ref{tab:boundstrata} details the boundary strata (except $\cH(0)$) of the stratum $\cQ(1^7, -1^3)$.

\begin{table}[h!]
\begin{tabular}{|r|r|ccccccc}
\cline{3-9}
\multicolumn{1}{r}{} &  & \multicolumn{7}{c}{Number of poles}\\
\cline{3-9}
\multicolumn{1}{r}{} &  & $0$ & 1 & 2 & 3 & 4 & 5 & 6 \\
 \hline
\multirow{3}{2.5mm}{\rotatebox{90}{Genus\;}} & $0$ & $\times$ & $\times$ & $\times$ & $\times$ & \cellcolor{Gray}\tiny{$\cQ(-1^4)$} & \cellcolor{Gray}\tiny{$\cQ(1, -1^5)$} & \tiny{$\cQ(1^2, -1^6)$} \\
& 1 & $\times$ & $\times$ & \cellcolor{Gray}\tiny{$\cQ(1^2, -1^2)$} & \cellcolor{Gray}\tiny{$\cQ(1^3, -1^3)$} & \cellcolor{Gray}\tiny{$\cQ(1^4, -1^4)$} & \cellcolor{Gray}\tiny{$\cQ(1^5, -1^5)$} & \tiny{$\cQ(1^6, -1^6)$} \\
  & 2 & \cellcolor{Gray}\tiny{$\cQ(1^4)$} & \cellcolor{Gray}\tiny{$\cQ(1^5, -1)$} & \cellcolor{Gray}\tiny{$\cQ(1^6, -1^2)$} & \cellcolor{GGray}\tiny{$\cQ(1^7, -1^3)$} & \tiny{$\cQ(1^8, -1^4)$} & \tiny{$\cQ(1^9, -1^5)$} & \tiny{$\cQ(1^{10}, -1^6)$} 
\end{tabular}
\bigskip 

\begin{tabular}{c}
\cellcolor{GGray}Stratum
\end{tabular}
\begin{tabular}{c}
\cellcolor{Gray}Boundary strata
\end{tabular}
\caption{Boundary strata of principal strata}\label{tab:boundstrata}
\end{table}

In general, the boundary strata of $\cQ(1^k, -1^l)$ are those of same genus with at most $l-1$ poles, those of lower genus with at most $l+2$ poles, and $\cH(0)$.

Note that, in this list, all values of volumes in genus 0 are known (cf \cite{AEZ}), equation (\ref{eq:ex1}) gives the values of volumes for the first entries in genus 1 and 2 (hyperelliptic case); and \cite{G} gives the values of the other strata of dimension up to 10 (cf \S~\ref{ss:exapp}).

\subsection{Siegel--Veech constants}

\begin{corollary}\label{th:SVprincipal}
Let $d=2g-2+k+l=\frac{1}{2}(3k+l)$ be the complex dimension of the stratum $\cQ(1^k, -1^l)$. The Siegel--Veech constants associated to the four configurations described in Figure \ref{fig:confprincipal} are the following:

\[c_{area}(\cC_1(k_1, l_1))=\frac{1}{4}\cfrac{(d_1-1)!(d_2-1)!}{(d-1)!}\cfrac{\Vol\cQ_1(1^{k_1}, -1^{l_1})\Vol\cQ_1(1^{k_2}, -1^{l_2})}{\Vol\cQ_1(1^k, -1^l)}\]
where $d_i=\dim_\C\cQ(1^{k_i}, -1^{l_i})=\frac{1}{2}(3k_i+l_i)$.

\[c_{area}(\cC_2)=2\cfrac{(d-3)!}{(d-1)!}\cfrac{\Vol\cQ_1(1^{k-2}, -1^{l+2})}{\Vol\cQ_1(1^k, -1^l)}\]

\[c_{area}(\cC_3)=\frac{\pi^2}{3} \cfrac{(d-5)!}{(d-1)!}\cfrac{\Vol\cQ_1(1^{k-3}, -1^{l+1})}{\Vol\cQ_1(1^k, -1^l)}\]

\[c_{area}(\cC_4) =  \frac{1}{2}\cfrac{(d-3)!}{(d-1)!}\cfrac{\Vol\cQ_1(1^{k-1}, -1^{l-1})}{\Vol\cQ_1(1^k, -1^l)}\]

If $ (k,l)\notin\{(2,2),(4,0)\} $, and if all the four configurations appear in a stratum $\cQ(1^k, -1^l)$, then the Siegel--Veech constant for the whole stratum is given by:

\begin{eqnarray*}c_{area}(\cQ(1^k, -1^l)) =  \sum_{\textrm{admissible }(k_1,l_1)}\cfrac{1}{2}\cdot\cfrac{k!\cdot l!}{k_1!k_2!(l_1-1)!(l_2-1)!}\cdot c_{area}(\cC_1(k_1, l_1))\\
  +  \cfrac{k(k-1)}{2}\cdot c_{area}(\cC_2)+\cfrac{k(k-1)(k-2)}{2}\cdot c_{area}(\cC_3)+\cfrac{kl(l-1)}{2}\cdot c_{area}(\cC_4)\end{eqnarray*}

\end{corollary}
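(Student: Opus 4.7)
The strategy is to apply Theorem~\ref{th:ccyl} individually to each of the four configurations $\cC_1, \cC_2, \cC_3, \cC_4$ in Figure~\ref{fig:confprincipal}, and then assemble the total via $c_{area}(\cQ(1^k,-1^l)) = \sum_{\cC} N(\cC)\, c_{area}(\cC)$ with multiplicities $N(\cC)$ obtained from formula~(\ref{eq:N}).

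For each configuration I would first read off from the graph the cylinder count $q=q_1+q_2$, the graph type (one of a--e in Figure~\ref{fig:fig3MZ}, determining $M_c$ via~(\ref{eq:M_c})), and the local ribbon graphs at each vertex (determining $K$ in $M_s$ and the twist ambiguity $M_t$). Using the inventory of surgeries established in the proof of the preceding proposition---zeros of order~$1$ arise only from poles via $-1.1$ vertices or from marked points via $+2.1$ vertices, and poles arise only by pinching cylinder boundaries---one identifies the boundary stratum $\cQ(\alpha')$: for $\cC_1$ it is the disconnected $\cQ(1^{k_1},-1^{l_1})\sqcup\cQ(1^{k_2},-1^{l_2})$ (a single thin cylinder separating two pieces, consuming one pole on each side and creating two newborn zeros of order~$1$); for $\cC_2$ it is the connected $\cQ(1^{k-2},-1^{l+2})$; for $\cC_3$ it is $\cH(0)\sqcup\cQ(1^{k-3},-1^{l+1})$, the torus component accounting for the factor $\pi^2/3=\Vol\cH_1(0)$ after invoking Lemma~\ref{lem:H1/2}; and for $\cC_4$ it is $\cQ(1^{k-1},-1^{l-1})$. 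One then substitutes into~(\ref{eq:cSV}), multiplies by $(q_1+q_2/4)/(\dim_{\C}\cQ(\alpha)-1)$ following~(\ref{eq:ccylc})--(\ref{eq:careaandc}), factorises $\Vol\cQ_1(\alpha')$ via the product-of-strata lemma whenever the boundary is disconnected, and uses the identity $\dim_{\C}\cQ(\alpha)=\dim_{\C}\cQ(\alpha')+q+1$ to telescope the factorials into the ratios $(d_{*}-1)!/(d-1)!$ displayed in the statement.

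The final step assembles the total by multiplying each $c_{area}(\cC_i)$ by $N(\cC_i)$ from~(\ref{eq:N}): the relevant multinomial coefficient counts the ways to distribute the labelled zeros of order~$1$ and labelled poles among the interior singularities of the boundary surface(s) and the newborn singularities, divided by the order of any decorated ribbon graph symmetry group. For $\cC_1(k_1,l_1)$ the involution exchanging the two sides produces the explicit $\tfrac12$ in the summand; the multiplicities $k(k-1)/2$, $k(k-1)(k-2)/2$, and $kl(l-1)/2$ for $\cC_2, \cC_3, \cC_4$ arise analogously from labelled choices of anchor zeros and anchor poles, taking into account any non-trivial graph symmetry. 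The main obstacle is the careful bookkeeping of $M_s=K/|\Gamma(\cC)|$ and of the symmetries entering $N(\cC)$---in particular distinguishing when an automorphism of the global graph acts non-trivially on cylinders while fixing the boundary surfaces (still forcing a division by~$2$ in $N$, as flagged in Section~\ref{sect:count}); the excluded cases $(k,l)\in\{(2,2),(4,0)\}$ correspond precisely to the hyperelliptic strata and to the degenerate situations covered by \S\ref{sssection:specialcase}. Once these combinatorial constants are pinned down, the remaining algebra is a routine manipulation of factorials together with the two volume lemmas.
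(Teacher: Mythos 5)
Your proposal follows essentially the same route as the paper's proof: you apply Theorem~\ref{th:ccyl} configuration by configuration with the same identification of boundary strata (including $\cH(0)$ for $\cC_3$ via Lemma~\ref{lem:H1/2}), the same combinatorial constants $M_c$, $M_t$, $M_s$ and cylinder counts, and then assemble the stratum constant through the multiplicities $N(\cC)$ of~(\ref{eq:N}) with the same symmetry bookkeeping, including the factor $\tfrac12$ for $\cC_1$. The only slip is peripheral: the excluded cases $(k,l)\in\{(2,2),(4,0)\}$ are the hyperelliptic strata $\cQ(1^2,-1^2)$ and $\cQ(1^4)$ treated in \S~\ref{sect:volhyp}, not instances of the empty-boundary special case of \S~\ref{sssection:specialcase}.
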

For the additional configurations in genera 1 and 2, see \S~\ref{sect:volhyp}.

\begin{proof}
The proof is a straightforward application of Theorem \ref{th:ccyl} for configurations given in Figure \ref{fig:confprincipal}. In order to illustrate the theorem, we explain in details what are the combinatorial data and the possible symmetries for each configuration.
\begin{enumerate}
\item Configuration 1 (Figure \ref{fig:1k-1lconfig1}):

This configuration happens only for genus $ g\geq 1 $, and for $ k_i, l_i $ satisfying the constraints $ (\star) $. The last constraint excludes the stratum $ \cQ(1, -1) $ which is empty.  
Figure \ref{fig:1k-1lconfig1} shows on the left the ribbon graph encoding the configuration, and on the right a topological picture for this configuration. There are two boundary surfaces and two newborn singularities of order 1, corresponding to the two connected components of the ribbon, and produced by gluing a cylinder to a hole made on a pole of a boundary surface.

\begin{figure}[h!]
\centering
\includegraphics[scale=1]{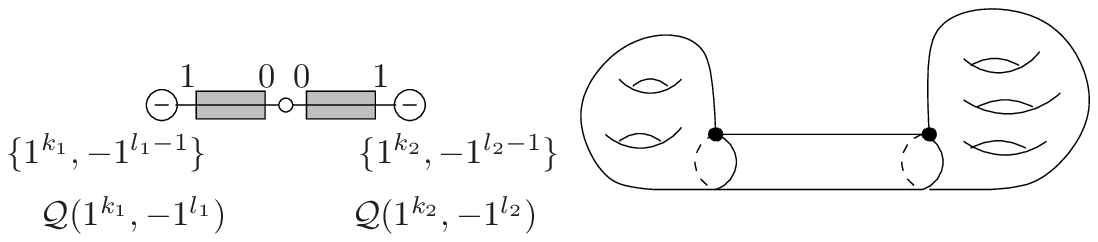}
\caption{\label{fig:1k-1lconfig1}Configurations $\cC_1(k_1,l_1)$ for $\cQ(1^k, -1^l)$ in genus $g \geq  1$}
\end{figure}

The configuration is of type $ a) $, the cylinder has its waist curve homologous to zero so by \eqref{eq:M_c}, we have $ M_c=4^{1} $. With this type of configuration there is no ambiguity for the twist so $ M_t=1 $ (see \eqref{eq:M_t}). There is only one choice for the ray we make the surgery along, and no local symmetry, so $ M_s =1$ (see \eqref{eq:M_s}).

We obtain the following combinatorial data for this configuration:
\begin{itemize}
\item  $M=\cfrac{M_c\cdot M_s}{M_t}=4$
\item $q_1=1, q_2=0$, since the cylinder is thin,
\item $\dim_{\C}\cQ(1^{k_i}, -1^{l_i})=2g_i-2+k_i+l_i=\frac{1}{2}(3k_i+l_i)=d_i$ 
\end{itemize}

Applying formula \eqref{eq:completecareaC}, with $ m=2 $ (two connected components for the boundary stratum) we get:
\[c_{area}(\cC_1(k_1, l_1))=4\cdot \frac{4}{2^{6}}\cfrac{(d_1-1)!(d_2-1)!\Vol\cQ_1(1^{k_1}, -1^{l_1})\Vol\cQ_1(1^{k_2}, -1^{l_2})}{(d-1)!\Vol\cQ_1(1^k, -1^l)}\]

If $ (k_1,j_1)\neq(k_2,l_2) $ the number of configurations of this type is (see \S~\ref{sect:count}) is obtained by applying formula \eqref{eq:N} with $ D_1^{nb}=D_2^{nb}=\{1\} $, $ D_1^{int}=\{1^{k_1}, -1^{-l_1-1}\} $ and $ D_2^{int}=\{1^{k_2}, -1^{-l_2-1}\} $:
\[N(\cC_1(k_1,l_1))=\cfrac{k!}{k_1!k_2!}\cdot\cfrac{l!}{(l_1-1)!(l_2-1)!}.\]

If $ (k_1,l_1)=(k_2,l_2)=(\frac{k}{2}-1,\frac{l}{2}+1) $, there is a decorated ribbon graph symmetry that exchanges the two boundary surfaces and the two connected components of the ribbon. In this case
\[N(\cC_1(\frac{k}{2}-1,\frac{l}{2}+1)=\cfrac{1}{2}\cdot \frac{k!}{\left(\left(\frac{k}{2}-1\right)!\right)^2}\cdot\frac{l!}{\left(\left(\frac{l}{2}\right)!\right)^2}.\]

Noting that the configuration $ \cC(k-k_1-2,l-l_1+2)$ is the symmetric of the configuration $ \cC(k_1,l_1) $, the contribution of these types of configurations for all admissible $ (k_1,l_1) $ is \[\sum_{\textrm{admissible }(k_1,l_1)}\cfrac{1}{2}\cdot\cfrac{k!}{k_1!k_2!}\cdot\cfrac{l!}{(l_1-1)!(l_2-1)!}\cdot c_{area}(\cC_1(k_1,l_1).\]

\item Configuration 2 (Figure \ref{fig:1k-1lconfig2}):

This configuration happens only for genus $ g\geq 1 $ and for number of zeros $ k\geq 2 $.
\begin{figure}[h!]
\centering
\includegraphics[scale=1]{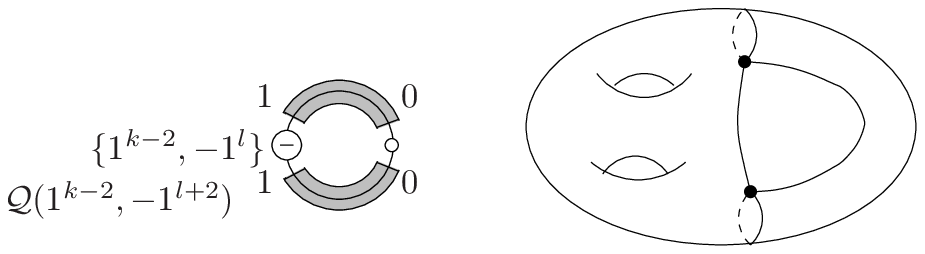}

\caption{\label{fig:1k-1lconfig2}Configuration $\cC_2$ for $\cQ(1^k, -1^l)$ in genus $g\geq 1$ and $k\geq 2$}
\end{figure}

Here the waist curve of the (thin) cylinder is non homologous to zero. We get the following combinatorial data:

\begin{itemize}
\item $M_c=4^2$, $M_t=1$, $M_s=1$
\item $q_1=1, q_2=0$
\item $\dim_{\C}\cQ(1^{k-2}, -1^{l+2})=2g+k+l-4=d_2$
\end{itemize}

We get:
\[c_{area}(\cC_2)=4^2\cdot\frac{4}{2^5}\cfrac{(d-3)!\Vol\cQ_1(1^{k-2}, -1^{l+2})}{(d-1)!\Vol\cQ_1(1^k, -1^l)}\]

There is here a decorated ribbon graph symmetry that stabilizes the boundary stratum and exchanges the two connected components of the ribbon, that is the two newborn singularities. Also we have
\[N(\cC_2)=\frac{1}{2}\cdot\frac{k!}{(k-2)!}\]
configurations of this type (see \S~\ref{sect:count}).

\item Configuration 3 (Figure \ref{fig:1k-1lconfig3}):

This configuration happens only for genus $ g\geq 1 $ and for number of zeros $ k\geq 3 $.
\begin{figure}[h!]
\centering
\includegraphics[scale=1]{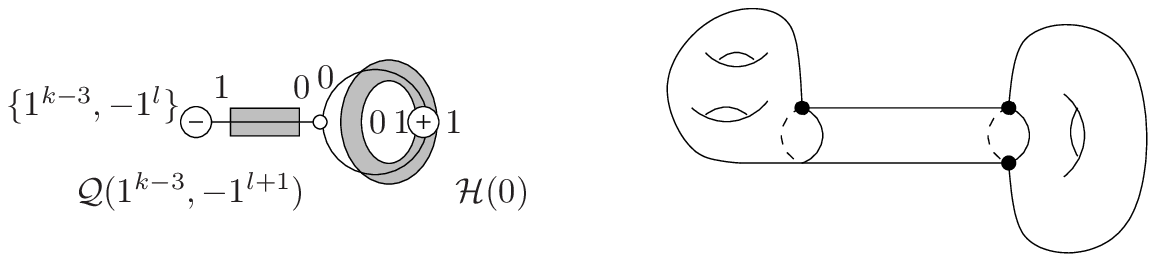}

\caption{\label{fig:1k-1lconfig3}Configuration $\cC_3$ for $\cQ(1^k, -1^l)$ in genus $g\geq 1$ and $k\geq 3$}
\end{figure}

Note that here the cylinder is thick but its twist is not ambiguous to define since the newborn zeros on the right are distinct, so $ M_t=1 $.

The shortest saddle connections defining the cylinder are the two joining the two newborn zeros on the right: they are not homologous to zero, whereas the saddle connection joining the newborn zero to itself on the left, which is \^homologous to the others, is homologous to zero. With our choice of convention we get $ M_c=4^{2} $ (see \eqref{eq:M_c}).

Note that the boundary stratum $ \cH(0) $ presents a local symmetry: the two possible rays to make the surgery are map one to another by the involution of the torus, so $ M_s=\frac{2}{2}=1 $.

Thus the combinatorial data are:
\begin{itemize}
\item $M=16$, 
\item $q_1=0, q_2=1$
\item $\dim_{\C}\cQ(1^{k-3}, -1^{l+1})=2g+k+l-6=d-4$
\item $\Vol\cH_{1/2}(0)=\frac{4\pi^2}{3}$ (see Lemma \ref{lem:H1/2}), and $ \dim_\C\cH(0)=2 $.
\end{itemize}

Applying formula \eqref{eq:completecareaC} with $ m=2 $ we get:
\[c_{area}(\cC_3) =  16\cdot\frac{1}{2^6}\cfrac{(d-5)!\Vol\cQ_1(1^{k-3}, -1^{l+1})(2-1)!\Vol\cH_{1/2}(0)}{(d_1)!\Vol\cQ_1(1^k, -1^l)}\]

The number of configurations of this type is:
\[N(\cC_3)=\frac{k!}{2!(k_3)!}.\]

\item Configuration 4 (Figure \ref{fig:1k-1lconfig4}):

\begin{figure}[h!]
\centering
\includegraphics[scale=1]{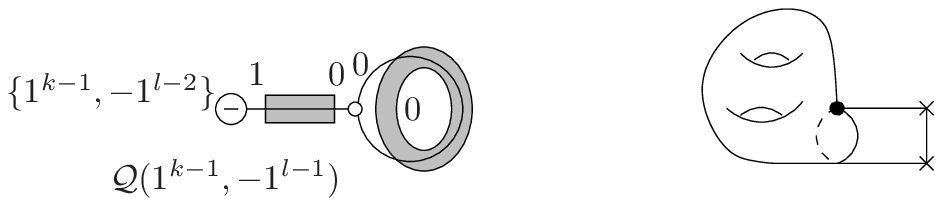}
\caption{\label{fig:1k-1lconfig4}Configuration $\cC_4$ for $\cQ(1^k, -1^l)$ in genus $g\geq 1$ and $l\geq 2$}
\end{figure}

The combinatorial data are:
\begin{itemize}
\item $M_c=4^2$, $M_t=1$, $M_s=1$
\item $q_1=0, q_2=1$
\item $\dim_{\C}\cQ(1^{k-1}, -1^{l-1})=2g+k+l-4=d-2$
\end{itemize}

Theorem \ref{th:ccyl} gives:
\[c_{area}(\cC_4) =  4^2\frac{1}{2^5}\cfrac{(d-3)!\Vol\cQ_1(1^{k-1}, -1^{l-1})}{(d-1)!\Vol\cQ_1(1^k, -1^l)}\]

There are
\[N(\cC_4)=\frac{k!}{(k-1)!}\cdot\frac{l!}{2!(l-2)!}\] configurations of this type.
\end{enumerate}
After simplification of the formulas we obtain the results of Corollary \ref{th:SVprincipal}.
\end{proof}
\subsection{Example of application}\label{ss:exapp}
As an application of the previous results, we compute the first steps of the recursion and obtain the exact Siegel--Veech constants for the first strata, using the values of the volumes computed in \cite{G}. The results match the approximate values obtained by experiments on Lyapunov exponents, provided by Charles Fougeron. Table \ref{tab:volSV} gathers all these data.

\begin{table}[h!]
\renewcommand{\arraystretch}{1.5}
\[\begin{array}{|c|c|c|}
\hline
\text{Stratum} & \text{Volume} & \pi^2\cdot c_{area}  \\
\hline
\cQ(1^3, -1^3) & 11/60\cdot  \pi^6 & \simeq 2.134\\
\hline
\cQ(1^4, -1^4) & 1/10\cdot  \pi^8 & \simeq 2.096\\
\hline
\cQ(1^5, -1) & 29/840\cdot  \pi^8 & \simeq 2.642\\
\hline
\cQ(1^5, -1^5) & 163/3042 \cdot \pi^{10} & \simeq 2.122\\
\hline
\cQ(1^6, -1^2) & 337/18144\cdot  \pi^{10} & \simeq 2.413\\
\hline
\end{array}
\]
\caption{Table of known values of volumes \cite{G}, and approximate values of Siegel--Veech constants}\label{tab:volSV}
\end{table}

We start with the stratum $\cQ(1^3, -1^3)$. 
Corollary \ref{th:SVprincipal} gives:
\begin{eqnarray*}
c_{area}(\cQ(1^3, -1^3))  =  3c_{area}(\cC_2)+3c_{area}(\cC_3)+9c_{area}(\cC_4)\\
 =  \frac{3}{10}\frac{\Vol\cQ_1(1, -1^5)}{\Vol\cQ_1(1^3, -1^3)}+\frac{\pi^2}{120}\frac{\Vol\cQ_1(-1^4)}{\Vol\cQ_1(1^3, -1^3)}+\frac{9}{40}\frac{\Vol\cQ_1(1^2, -1^2)}{\Vol\cQ_1(1^3, -1^3)}
\end{eqnarray*}

Using values \begin{eqnarray*}\Vol\cQ_1(1^k,-1^{k+4})=\cfrac{\pi^{2k+2}}{2^{k-1}} & \mbox{ \cite{AEZ}}, \\ \Vol\cQ_1(1^2, -1^2)=\frac{\pi^4}{3} & \mbox{ (\ref{eq:ex1})},\end{eqnarray*} we get:

\[c_{area}(\cQ(1^3, -1^3))=\frac{47}{120}\cfrac{\pi^4}{\Vol\cQ_1(1^3, -1^3)}\]

Using the value of the volume given in Table \ref{tab:volSV}, we obtain
\[c_{area}(\cQ(1^3, -1^3))=\cfrac{47}{22\cdot \pi^2},\]
which matches the approximated value given in Table \ref{tab:volSV}.

Similarly for the other strata we obtain exact values of Siegel--Veech constants that match the approximated ones. Table \ref{tab:SVLyap} gives all these exact values, as well as the exact values of the sums of Lyapunov exponents for the Hodge bundle over the strata along the Teichm\"uller flow (using Theorem 2 of \cite{EKZ}). In this table we denote \[L^{+}=\lambda_1^{+} +\dots +\lambda_g^{+}\] the sum of the Lyapunov exponents $\lambda_1^{+}\geq\dots\geq \lambda_g^{+}$ of the invariant subbundle $H_1^{+}$ of the Hodge bundle with respect to the involution induced by the natural involution on the double cover surfaces, and \[L^{-}=\lambda_1^{-} +\dots +\lambda_{g_{\eff}}^{-}\] the Lyapunov exponents $1=\lambda_1^{-}\geq\dots \geq \lambda_{g_{\eff}}^{-}$ of the anti-invariant subbbundle $H_1^{-}$ (see \cite{EKZ} for the definitions).
For surfaces of genus 1, we get the exact value of $\lambda_1^{+}$ which is very useful for the study of windtree models (see \cite{Delecroix:Hubert:Lelievre}), on the other cases, we obtain bounds for individual Lyapunov exponents.

\begin{table}[h!]
\renewcommand{\arraystretch}{1.5}
\[\begin{array}{|c|c|c|c|c|c|}
\hline
\text{Stratum} & g&g_{\eff} &\pi^2\cdot c_{area} &L^+ & L^- \\
\hline
\cQ(1^3, -1^3) & 1 & 3&{47}/{22}& 6/11&17/11  \\
\hline
\cQ(1^4, -1^4) &1 &4 & {44}/{21}&10/21&38/21\\
\hline
\cQ(1^5, -1) & 2 &4 & {230}/{87 }&32/29&154/87\\
\hline
\cQ(1^5, -1^5) &1 &5&{2075}/{978 }&70/163&1025/489 \\
\hline
\cQ(1^6, -1^2) & 2 &5 & {8131}/{3770 }&1041/1885&2926/1885\\
\hline
\end{array}
\]
\caption{Table of obtained exact values of Siegel--Veech constants and sums of Lyapunov exponents}\label{tab:SVLyap}
\end{table}

\section{Formulas for hyperelliptic components}\label{sect:volhyp}

\subsection{Volumes of hyperelliptic components}
The strata of the moduli spaces of Abelian differentials have at most three components: in genus $ g\geq 4 $ there are three connected component when the stratum possesses an hyperelliptic component and a well-defined spin structure (i.e. the zeros are even), there are two components when the stratum  possesses either a hyperelliptic component or a well-defined spin structure, but not both, and one component in all remaining cases \cite{KZ}. In lower genus the strata $ \cH(1,1) $ and $ \cH(2) $ are hyperelliptic and connected, and the strata $ \cH(2,2) $ and $ \cH(4) $ have two connected components.

We recall from \cite{G} the formulas for the volumes of hyperelliptic components in the Abelian case.

\begin{proposition}The volumes of hyperelliptic components of strata of Abelian differentials with area $1/2$ are given by the following formulas:
\begin{eqnarray}\Vol^{numb}\cH^{hyp}_{1/2}(k-1)=\cfrac{2^{k+2}}{(k+2)!}\cdot\cfrac{(k-2)!!}{(k-1)!!}\cdot\pi^{k+1}\label{eq:volHhyp1}\\
\Vol^{numb}\cH^{hyp}_{1/2}\left(\left(\frac{k}{2}-1\right)^2\right)=\cfrac{2^{k+3}}{(k+2)!}\cdot\cfrac{(k-2)!!}{(k-1)!!}\cdot\pi^{k}\label{eq:volHhyp2}\end{eqnarray}
\end{proposition}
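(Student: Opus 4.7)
The plan is to identify each hyperelliptic Abelian component with a genus-zero stratum of quadratic differentials via the hyperelliptic involution, and then to invoke the Athreya--Eskin--Zorich formula for volumes of genus-zero quadratic strata.

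\emph{Identification.} Let $(\hat S,\omega)$ lie in a hyperelliptic component with involution $\iota$; since $\iota^{*}\omega=-\omega$, the quadratic differential $q:=\omega^{2}$ descends to the sphere $\hat S/\iota\simeq\mathbb{CP}^1$. For $\cH^{hyp}(k-1)$ (case $k=2g-1$), the unique zero of $\omega$ is forced to sit at a Weierstrass point and projects to a zero of $q$ of odd order $k-2$; the remaining $k+2$ Weierstrass points project to simple poles, so $q\in\cQ(k-2,-1^{k+2})$. For $\cH^{hyp}((k/2-1)^{2})$ (case $k=2g$), the two zeros of $\omega$ are exchanged by $\iota$ and project to a single zero of $q$ of even order $k-2$, while the $k+2$ Weierstrass points again become simple poles, so once more $q\in\cQ(k-2,-1^{k+2})$. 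Thus the same downstairs stratum governs both formulas, which is consistent with the fact that they differ only by a factor of $2$.

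\emph{Volume comparison.} Period coordinates on the hyperelliptic component are precisely the anti-invariant cohomology $H^{1}_{-}(\hat S,\hat\Sigma;\C)$ already used in \S\ref{ssect:norm}, so the projection $\cH^{hyp}(\alpha)\to\cQ(k-2,-1^{k+2})$ is a local linear isomorphism in period coordinates. I would extract the ratio of total volumes as a product of three explicit factors: (i) the lattice-covolume comparison dictated by Convention \ref{convreseau}, which becomes a power of $2$ after reading off which cycles of $H_{1}(\hat S,\hat\Sigma;\Z)$ carry integer as opposed to half-integer periods; (ii) the factor relating area $1/2$ downstairs to area $1$ upstairs imposed by Convention \ref{convarea}; and (iii) the sheet count of the projection, which is $1$-to-$1$ in case one but $2$-to-$1$ in case two because the two labeled zeros are swapped by $\iota$. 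Point (iii) is exactly what produces the single factor of $2$ separating \eqref{eq:volHhyp1} from \eqref{eq:volHhyp2}.

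\emph{Substitution and main obstacle.} Finally, I would plug in the Athreya--Eskin--Zorich closed-form expression for $\Vol\cQ_{1}(k-2,-1^{k+2})$ in genus zero --- a product over singularities of their explicit factors $v(d_i)$, in which the $k+2$ simple poles each contribute a constant of the form $1/2$ and the single zero of order $k-2$ contributes the double-factorial ratio $(k-2)!!/(k-1)!!$ --- and collect the resulting powers of $\pi$ and of $2$; after cancellation this should yield the two announced identities. I expect the main obstacle to be item (i) of the comparison: one has to adapt the generic basis of Lemma \ref{lem:basis} to the hyperelliptic setting, check which of the $\hat\gamma$ cycles cover Weierstrass points versus pairs of points exchanged by $\iota$, and verify that the anti-invariant lattice one obtains matches $(H^{-}_{1}(\hat S,\hat\Sigma;\Z))^{*}_{\C}$ up to exactly the expected power of $2$, without any off-by-a-factor-of-two ambiguity that would ruin the match with the $2^{k+2}$ and $2^{k+3}$ in the statement.
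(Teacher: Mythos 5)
You should first note that the paper does not prove this proposition at all: it is recalled from \cite{G}, and the derivation there is indeed along the lines you propose, namely identifying $\cH^{hyp}(k-1)$ and $\cH^{hyp}\bigl((\tfrac{k}{2}-1)^2\bigr)$ with the genus-zero strata $\cQ(k-2,-1^{k+2})$ via the orientation double cover and importing the Athreya--Eskin--Zorich volumes \cite{AEZ}. Your identification step is correct (which Weierstrass points become simple poles, how the zero orders transform, and the fact that the anti-invariant period coordinates of the genus-zero stratum are exactly the period coordinates of the hyperelliptic component), so the strategy is the right one.

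However, as written your argument cannot yet produce \eqref{eq:volHhyp1}--\eqref{eq:volHhyp2}, for three concrete reasons. First, the factor $(k+2)!$ in the denominators has no source in your accounting: the AEZ volume $\Vol\cQ_1(k-2,-1^{k+2})$ is a product of local factors over \emph{named} singularities (Convention \ref{conv:label}), whereas upstairs the $k+2$ simple poles lift to unmarked regular points, so one must divide by the $(k+2)!$ relabelings of the poles when passing to $\Vol^{numb}\cH^{hyp}$. Your items (i)--(iii) record only a lattice power of $2$, the area rescaling, and the factor $2$ from the two zero labels in the second case; no factorial can emerge from them, so the plan as stated would not match the announced formulas. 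Second, the entire content of the proposition is the explicit constants, and the decisive ones --- the power of $2$ from comparing the lattice $(H^-_1(\hat S,\hat\Sigma;\Z))^{*}_{\C}$ of Convention \ref{convreseau} with the standard integer lattice used for Abelian strata, combined with the area conversion between $\cQ_1$ (double cover of area $1$, Convention \ref{convarea}) and $\cH_{1/2}$ (which by Lemma \ref{lem:H1/2} costs $2^{\dim_\C}$) --- are exactly what you defer as ``the main obstacle''; leaving them undetermined means the formulas are not established. Third, a smaller but genuine error: the two cases do not land in ``the same downstairs stratum'' and the two displayed formulas do not ``differ only by a factor of $2$'': in \eqref{eq:volHhyp1} $k$ is odd and the downstairs zero has odd order, in \eqref{eq:volHhyp2} $k$ is even, so the strata are different (different parities, different powers of $\pi$ in the AEZ product); the relative factor $2$ you correctly anticipate from the swapped labeled zeros appears in the upstairs/downstairs conversion, not as a relation between the two stated identities.
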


\begin{remark}
Note that the hyperelliptic involution is a natural symmetry for the surfaces in $\cH^{hyp}(k-1)$, if we choose to count them modulo this symmetry, as in \cite{EO}, the volume of this component should be divided by 2. For the second type, labelling the zeroes kills this symmetry.
\end{remark}

The strata of the moduli spaces of quadratic differentials have one or two connected components: for genus $g\geq 5$ there are two components when the stratum contains a hyperelliptic component \cite{L2}. For genus $g\leq 4$ some strata are hyperelliptic and connected \cite{L1}: namely $\cQ(1^2, -1^2)$ and $\cQ(2, -1^2)$ in genus 1, $\cQ(1^4)$, $\cQ(2, 1^2)$, and $\cQ(2,2)$ in genus 2.

We recall from \cite{G} the formulas for the volumes of hyperelliptic components in the quadratic case.
\begin{proposition} 
The volumes of hyperelliptic components of strata of quadratic differentials are given by the following formulas:
\begin{itemize}
\item First type ($k_1\geq -1$ odd, $k_2\geq -1$ odd, $(k_1, k_2)\neq (-1, -1)$):

If $k_1\neq k_2$:
\begin{equation}\Vol^{numb}\cQ_1^{hyp}(k_1^2, k_2^2)=\frac{2^{d}}{(d)!}\pi^{d}\frac{k_1!!}{(k_1+1)!!}\frac{k_2!!}{(k_2+1)!!}\label{eq:volQhyp1}\end{equation}
Otherwise:
\begin{equation}\Vol^{numb}\cQ_1^{hyp}((g-1)^4)=\frac{3\cdot 2^{2g+2}}{(2g+2)!}\pi^{2g+2}\left(\frac{(g-1)!!}{g!!}\right)^2\label{eq:volQhyp1part}\end{equation}

\item Second type ($k_1\geq -1$ odd, $k_2\geq 0$ even):
\begin{equation}\Vol^{numb}\cQ_1^{hyp}(k_1^2, 2k_2+2)=\frac{2^{d}}{(d)!}\pi^{d-1}\frac{k_1!!}{(k_1+1)!!}\frac{k_2!!}{(k_2+1)!!}\label{eq:volQhyp2}\end{equation}

\item Third type ($k_1$, $k_2$ even):

\begin{equation}\Vol^{numb}\cQ_1^{hyp}(2k_1+2, 2k_2+2)=\frac{2^{d+1}}{(d)!}\pi^{d-2}\frac{k_1!!}{(k_1+1)!!}\frac{k_2!!}{(k_2+1)!!}\label{eq:volQhyp3}
\end{equation}
\end{itemize} where $d=k_1+k_2+4$ is the complex dimension of the strata.
\end{proposition}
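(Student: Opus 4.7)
I would prove the three volume formulas by exploiting the fact that every hyperelliptic component of a stratum of quadratic differentials is a finite cover of a stratum in genus zero (the quotient by the hyperelliptic involution), and then substituting the explicit genus zero volume formula of Athreya--Eskin--Zorich.

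\textbf{Step 1: Quotient on the sphere.} For $(S,q)\in \cQ^{hyp}(\alpha)$ there is a holomorphic involution $\sigma\colon S\to S$ with $\sigma^*q=q$, and the quotient $S/\sigma$ is the Riemann sphere $\mathbb{CP}^1$. The involution has $2g+2$ Weierstrass points. The three cases of the proposition correspond exactly to the possible distributions of these points relative to the singularities of $q$:
\begin{itemize}
\item In Type 1, the singularities of odd order $k_1,k_2$ cannot sit at Weierstrass points, so the two copies $k_1^2$ (resp.\ $k_2^2$) are exchanged and descend to single zeros of order $k_1$ (resp.\ $k_2$). The $2g+2=d$ Weierstrass points descend to $d$ simple poles, giving the quotient $\cQ(k_1,k_2,-1^{d})$ on $\mathbb{CP}^1$.
\item In Type 2, the even zero of order $2k_2+2$ is a Weierstrass point, so the quotient is $\cQ(k_1,k_2,-1^{d-1})$ on $\mathbb{CP}^1$.
\item In Type 3, both even zeros are Weierstrass points and the quotient is $\cQ(k_1,k_2,-1^{d-2})$ on $\mathbb{CP}^1$.
\end{itemize}

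\textbf{Step 2: Relation of volumes via the cover.} The map $(S,q)\mapsto (S/\sigma,\bar q)$ is a finite branched cover $\cQ^{hyp}(\alpha)\to\cQ(\alpha_0)$ on moduli spaces. In period coordinates it is a linear map: pulling back periods along the unramified double cover of the pair of pants doubles every complex coordinate, so the Jacobian in the chosen lattice normalization (Convention \ref{convreseau}) contributes a uniform factor $2^d$. The degree of the cover, after taking into account the labelling of zeros (Convention \ref{conv:label}), the choice of which Weierstrass points descend to which of the labelled simple poles, and the quotient by the central $\sigma$-action, contributes an additional explicit combinatorial factor. Comparing the area-$1$ hyperboloid in $\cQ^{hyp}(\alpha)$ with the area-$1/2$ hyperboloid on the quotient (Convention \ref{convarea}) gives a further factor of $2$.

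\textbf{Step 3: Plug in the AEZ formula.} Athreya--Eskin--Zorich proved Kontsevich's conjectural formula
\[
\Vol \cQ_1(d_1,\dots,d_n)=2\pi^{2g_{\rm eff}}\prod_{i=1}^n \frac{d_i!!}{(d_i+1)!!}
\]
for strata in genus $0$, where the exponent of $\pi$ is controlled by the parity of the orders. Substituting the sphere strata identified in Step~1 yields the three formulas (\ref{eq:volQhyp1}), (\ref{eq:volQhyp2}), (\ref{eq:volQhyp3}), with the power of $\pi$ decreasing by one each time an even singularity is added, which matches the statement.

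\textbf{Main obstacle.} The delicate part is \emph{not} identifying the quotient strata, but pinning down every combinatorial factor: the degree of the cover $\cQ^{hyp}(\alpha)\to\cQ(\alpha_0)$ (which depends on how the Weierstrass points are labelled versus unlabelled), the factor $2^d$ from rescaling periods under the unramified cover, and the compatibility of the $\cQ_1$-normalizations on both sides. The special subcase $k_1=k_2$ of Type 1 (formula (\ref{eq:volQhyp1part})) has an extra $\mathbb{Z}/2$ symmetry permuting the two pairs of zeros of equal order; verifying that this symmetry produces exactly the factor $3$ discrepancy between (\ref{eq:volQhyp1}) and (\ref{eq:volQhyp1part}) is the most subtle bookkeeping. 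Once these normalizations are controlled and the AEZ formula is inserted, all three formulas fall out by direct simplification.
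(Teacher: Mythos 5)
Note first that the paper does not actually prove this proposition: it is recalled verbatim from \cite{G}, and the strategy you outline (realize the hyperelliptic component as a cover of a genus-zero stratum and feed in the Athreya--Eskin--Zorich volumes) is indeed the natural route to it. But as written your argument has a concrete error at the very first step. You assume that the number of Weierstrass points is $2g+2=d$ in all three cases, whereas $2g+2$ equals $d$, $d+1$, $d+2$ in types 1, 2, 3 respectively. As a consequence the quotient strata you propose for types 2 and 3, namely $\cQ(k_1,k_2,-1^{d-1})$ and $\cQ(k_1,k_2,-1^{d-2})$, do not exist: on $\mathbb{CP}^1$ the orders must sum to $-4$, and yours sum to $-3$ and $-2$. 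In all three cases the quotient is the \emph{same} stratum $\cQ(k_1,k_2,-1^{k_1+k_2+4})$ (every pole downstairs must be a branch point, since no poles survive upstairs); what distinguishes the three types is only which singularities are branch points (all poles; all poles plus the even zero; all poles plus both even zeros). The different powers of $\pi$ and the extra factor $2$ in the three formulas therefore come from the parity-dependent local factors of the genus-zero formula of \cite{AEZ} (odd versus even $k_i$), not from different numbers of poles; with your identification the dimensions and $\pi$-exponents already come out wrong before any bookkeeping starts.

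The second problem is that the bookkeeping you defer as the ``main obstacle'' is precisely the content of the proposition: the factor $2^{d}/d!$ (the $d!$ from forgetting the labels of the $d$ poles, which become anonymous Weierstrass points upstairs, and the power of $2$ from comparing the integer/half-integer lattices of Convention \ref{convreseau} and the area normalizations under the cover), the extra $2$ in the third type, and the $3$ in the case $k_1=k_2$. None of these constants is actually derived in your proposal, so the three formulas are not yet established. Moreover your proposed explanation of the special case is not tenable: an extra $\Z/2$ symmetry can only contribute a factor $2$ (or $1/2$), never $3$; the factor $3$ in \eqref{eq:volQhyp1part} reflects the $\binom{4}{2}/2=3$ ways of grouping the four labelled zeros of equal order $g-1$ into the two pairs exchanged by the hyperelliptic involution, a choice which is forced when $k_1\neq k_2$. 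So the overall plan points in the right direction, but both the identification of the quotient strata and the determination of the combinatorial constants need to be redone before the formulas ``fall out''.
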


\begin{example}
For the five strata that are connected and hyperelliptic we obtain:
\begin{eqnarray}\Vol\cQ_1(1^2, -1^2)=\cfrac{\pi^4}{3}=30\zeta(4) & \Vol\cQ_1(1^4)=\cfrac{\pi^6}{15}=63\zeta(6) \label{eq:ex1}\\
\Vol\cQ_1(2, -1^2)=\cfrac{4\pi^2}{3}=8\zeta(2) & \Vol\cQ_1(2, 1^2)=\cfrac{2\pi^4}{15}=12\zeta(4) \label{eq:ex2}\\
& \Vol \cQ_1(2, 2)=\cfrac{4\pi^2}{3}=8\zeta(2) \label{eq:ex3}
\end{eqnarray}

\end{example}

\begin{remark}
Note that if surfaces are counted modulo symmetries, as in \cite{EO2}, then the volume of the third type of hyperlliptic component should be divided by 2.
\end{remark}


\subsection{Configurations containing cylinders in hyperelliptic components}
The complete list of all configurations of \^homologous saddle connections is described by C.~Boissy in \cite{B}. We extract from this list the configurations containing cylinders, and recall them on Figure \ref{fig:confighyp}.

The following proposition precises the boundary of the hyperelliptic components of strata.

\begin{proposition}\label{prop:boundhyp}Let $S$ be a flat surface in a hyperelliptic component of a stratum of quadratic differentials $\cQ^{hyp}(\alpha)$. Les $\gamma$ be a collection of \^homologous saddle connections realizing a configuration $\cC$ on the previous list (Figure \ref{fig:confighyp}). Then the two possible boundary components $S_1, S_2 \in \cQ(\alpha'_1), \cQ(\alpha'_2)$ of $S$ are hyperelliptic.

For every surfaces $S_1\in\cQ^{hyp}(\alpha'_1)$, $S_2\in\cQ^{hyp}(\alpha'_2)$, there is at least one way to assemble $S_1$ and eventually $S_2$ following configuration $\cC$ to obtain a hyperelliptic surface $S$.
\end{proposition}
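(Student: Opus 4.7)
The plan is to exploit the characterization of hyperelliptic components as ramified double covers of strata in genus zero: I would show that every admissible configuration on $S\in\cQ^{hyp}(\alpha)$ is preserved setwise by the hyperelliptic involution, so the boundary pieces inherit hyperellipticity, and conversely that hyperelliptic boundary surfaces can be glued equivariantly to produce a hyperelliptic surface.

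First I would recall that any $S\in\cQ^{hyp}(\alpha)$ admits an involution $\iota:S\to S$ with $\iota^{\ast}q=q$, whose quotient is a flat sphere with marked points. The involution is an isometry of the flat metric, so it sends saddle connections to saddle connections, acts as $-\mathrm{Id}$ on holonomies, and preserves \^homology classes up to sign. Consequently, if $\gamma$ realizes a configuration $\cC$ from Boissy's list (Figure \ref{fig:confighyp}) then $\iota(\gamma)$ is another collection of \^homologous saddle connections of the same type $\cC$ and the same holonomy direction.

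For the first part, I would check case by case from Figure \ref{fig:confighyp} that $\iota(\gamma)=\gamma$ as a set. In each configuration, the endpoints of the saddle connections in $\gamma$ are combinatorially distinguished singularities (of distinct orders, or in specific cyclic positions on a thick cylinder), and these are precisely the singularities which are either fixed by $\iota$ or paired by $\iota$ in the way imposed by the three hyperelliptic types. Combined with the fact that $\iota$ preserves the holonomy $\pm v$, this forces $\iota(\gamma)=\gamma$ setwise. Hence the decomposition of $S\setminus\gamma$ into boundary pieces and cylinders is $\iota$-invariant, and $\iota$ restricts to an involution on each piece $S_i$; after collapsing each boundary component to the corresponding newborn singularity, the restricted involution becomes a hyperelliptic involution on $S_i$ with spherical quotient, so $S_i\in\cQ^{hyp}(\alpha_i')$.

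For the converse I would describe an explicit $\iota$-equivariant surgery. Given hyperelliptic $S_1,S_2$ with involutions $\iota_1,\iota_2$, the distinguished singularities on which the surgery of $\cC$ must be performed are either fixed points of the $\iota_i$ or pairs exchanged by them (dictated by the combinatorics of the types of Boissy). One can therefore cut each $S_i$ along geodesic segments emanating from these points symmetrically, and attach the required cylinders with widths and twists chosen to be invariant under the canonical cylinder involution. The individual involutions then glue into a global involution $\iota$ on $S$ whose quotient is again a sphere, so $S\in\cQ^{hyp}(\alpha)$. The main obstacle, and the only non-formal step, is the case-by-case verification along Boissy's classification: one must confirm for each configuration in Figure \ref{fig:confighyp} both the setwise $\iota$-invariance of $\gamma$ and the feasibility of the equivariant gluing, which reduces to checking local pictures at the ramification points of the $\iota_i$. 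Since the list is explicit and finite, this check is routine once those local pictures are set up.
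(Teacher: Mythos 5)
Your argument is essentially the paper's: both directions rest on the invariance of the configuration under the hyperelliptic involution, which then restricts to each boundary piece, and the converse is done by involution-equivariant surgeries together with a global involution acting on the cylinders by rotation or by exchanging their two boundaries. The paper simply delegates the two non-formal steps to Lemma 10.3 and \S 14 of \cite{EMZ} and to \cite{B} (where the setwise invariance $\iota(\gamma)=\gamma$ comes from maximality of the collection combined with the action of $\iota$ on \^homology, a cleaner route than your combinatorial distinguishing of endpoints), whereas you propose to carry out these checks case by case along Figure \ref{fig:confighyp}.
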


\begin{proof} If $S\in\cQ^{hyp}(\alpha)$, following Lemma 10.3 of \cite{EMZ}, we may assume that the hyperelliptic involution fixes each boundary component. So it implies that $S_1$ and $S_2$ are also hyperelliptic.

If $S_1\in\cQ^{hyp}(\alpha'_1)$ and $S_2\in\cQ^{hyp}(\alpha'_2)$, we can make the surgeries on the boundary surfaces in such a way that the new surfaces stay invariant under the hyperelliptic involution (cf \S~14 in \cite{EMZ}). Then we construct an application on $S$ that acts on each boundary component as the hyperelliptic involution for the corresponding stratum and on the cylinder either by fixing its boundaries and rotating or by exchanging its two boundaries depending on the configuration $\cC$, in such a way that the global application is an involution of $S$. The action of the hyperelliptic involution on the configurations is detailed in \cite{B}.

\end{proof}

Note that the complex dimension of any hyperelliptic component is given by: 
\begin{align*}d:=\dim_\C\cQ^{hyp}(k_1^ 2, 2k_2+2)=\dim_\C\cQ^{hyp}(k_1^2, k_2^2)\\= \dim_\C\cQ^{hyp}(2k_1+2, 2k_2+2)=k_1+k_2+4.\end{align*}

First recall that the constants for the entire components are known \cite{EKZ}:
\begin{lemma}
\begin{equation}c_{area}(\cQ^{hyp}(\alpha))=\frac{k_1+k_2+4}{4\pi^2}\left(2+\frac{1}{(k_1+2)(k_2+2)}\right)\end{equation}\label{eq:chyp}
for $\alpha=(k_1^2, k_2^2)$, $\alpha=(k_1^2, 2k_2+2)$ or $\alpha=(2k_1+2, 2k_2+2)$.
\end{lemma}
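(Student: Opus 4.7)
The plan is to present this as a direct consequence of the Eskin--Kontsevich--Zorich formula specialized to the hyperelliptic setting. Recall that \cite{EKZ} relates $c_{area}$ of any $\SL$-invariant suborbifold in a stratum of quadratic differentials to the sum of (non-negative) Lyapunov exponents of the Hodge bundle along the Teichmüller flow. Schematically, after isolating $c_{area}$, the EKZ identity reads
\begin{equation*}
\tfrac{\pi^2}{3}\,c_{area}(\cQ^{hyp}(\alpha)) \;=\; L^+(\cQ^{hyp}(\alpha)) \;-\; \tfrac{1}{24}\sum_i \tfrac{m_i(m_i+4)}{m_i+2},
\end{equation*}
where the sum runs over the orders of the singularities (weighted appropriately according to parity). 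Because every $S\in\cQ^{hyp}(\alpha)$ is a double cover of $\mathbb{CP}^1$ with an explicit ramification pattern, the sum of Lyapunov exponents $L^+$ can be computed exactly from the covering data, and this is precisely what is carried out in \cite{EKZ}.

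The first step is therefore to quote from \cite{EKZ} the closed-form value of $L^+$ for each of the three types of hyperelliptic components, namely signatures $(k_1^2,k_2^2)$, $(k_1^2,2k_2+2)$, and $(2k_1+2,2k_2+2)$. In each case $L^+$ is a very simple rational expression in $k_1$ and $k_2$ forced by the hyperelliptic symmetry: the Lyapunov spectrum of $H^1_-$ (which carries all the dynamical content because the quotient is genus zero) is dictated by the ramification type of the double cover, and contributes predictable fractions.

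The second step is a direct evaluation of the combinatorial term $\tfrac{1}{24}\sum_i \tfrac{m_i(m_i+4)}{m_i+2}$ for each of the three signatures. In each case the sum has at most two distinct terms and collapses to an expression depending only on $k_1+k_2$ and $(k_1+2)(k_2+2)$. Subtracting this combinatorial term from $L^+$, dividing by $\pi^2/3$, and simplifying yields exactly
\begin{equation*}
c_{area}(\cQ^{hyp}(\alpha)) \;=\; \frac{k_1+k_2+4}{4\pi^2}\left(2+\frac{1}{(k_1+2)(k_2+2)}\right),
\end{equation*}
as claimed. The striking uniformity across the three types is not a coincidence: they all correspond to hyperelliptic double covers of $\cQ(-1^4)$ with shifts in ramification, and the parity differences cancel between $L^+$ and the combinatorial term.

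The main obstacle, and the only non-mechanical part of the argument, is reconciling normalization conventions. One must verify that Convention \ref{convarea} (surfaces of area $1/2$ with area-$1$ orientation cover), Convention \ref{conv:label} (labelled singularities), and our lattice normalization in $H^1_-(\hat S,\hat\Sigma;\C)$ match those employed in \cite{EKZ}; this dictates the precise prefactor $\tfrac{\pi^2}{3}$ as opposed to, say, $\tfrac{\pi^2}{6}$. Once the conventions are aligned, the identity is algebraic and can be verified in each of the three cases separately with no further geometric input.
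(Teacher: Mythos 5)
Your proposal takes essentially the same route as the paper: both reduce the lemma to Theorem 1 and Corollary 3 of \cite{EKZ}, quoting the closed-form sums of Lyapunov exponents for the three hyperelliptic components and finishing with a case-by-case algebraic simplification. Two small corrections so the quoted identity is the right one: the relevant exponent sum is $L^-$, for the anti-invariant subbundle $H_1^-$ of the orientation double cover (not $L^+$), and the EKZ formula contains, besides $\frac{1}{24}\sum_j \frac{d_j(d_j+4)}{d_j+2}$, the separate additive term $\frac{1}{4}\sum_{d_j\ \mathrm{odd}}\frac{1}{d_j+2}$, which is what your phrase ``weighted appropriately according to parity'' must be taken to mean; with these in place the computation goes through exactly as you describe.
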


\begin{proof}It is a direct consequence of Corollary 3 in \cite{EKZ}.
Let $L^-$ denote the sum of the Lyapunov exponents $\lambda_1^-, \dots, \lambda_{g_{\mathrm{eff}}}^-$ for the hyperelliptic component $\cQ^{hyp}(\alpha)$. Recall that by Theorem 1 of \cite{EKZ}, we have: \[c_{area}(\cQ^{hyp}(\alpha))=\frac{3}{\pi^2}(L^--I-K)\] where \[I=\frac{1}{4}\sum_{d_j\;odd}\frac{1}{d_j+2},\quad K=\frac{1}{24}\sum_j \frac{d_j(d_j+4)}{d_j+2}.\]
Corollary 3 in \cite{EKZ} gives the values of $L^-$ for hyperelliptic components, that we recall here:
\begin{eqnarray*}L^-=\frac{k_1+k_2+4}{4}\left(1+\frac{1}{(k_1+2)(k_2+2)}\right) & \mathrm{  for} & \cQ^{hyp}(k_1^2, k_2^2)\\
L^-=\frac{k_1+k_2+4}{4}\left(1+\frac{1}{k_1+2}\right) & \mathrm{  for} & \cQ^{hyp}(k_1^2, 2k_2+2)\\
L^-=\frac{k_1+k_2+4}{4} & \mathrm{  for} & \cQ^{hyp}(2k_1+2, 2k_2+2)\end{eqnarray*}
\end{proof}

For hyperelliptic components we obtain the following variation of formula (\ref{eq:completecareaC}):

\begin{proposition}
Let $ \cC $ be an admissible configuration for a hyperelliptic component of a stratum $ \cQ(\alpha) $ (see Figure \ref{fig:confighyp} and Figure \ref{fig:confighypdeg}). If $ \cQ(\alpha)\neq \cQ(2, -1^2) $ and $ \cQ(\alpha)\neq \cQ(2, 2) $, then the corresponding Siegel--Veech constant is given by:
\begin{equation}c_{area}(\cC)=M\frac{4q_1+q_2}{2^{m+q+3}}\frac{\prod_i(a_i-1)!2^{a_i}\Vol \cH_1^{hyp}(\alpha_i)\prod_j(b_j-1)!\Vol \cQ_1^ {hyp}(\beta_j)}{(\dim_{\C} \cQ(\alpha)-1)!\Vol \cQ_1^{hyp}(\alpha)}\label{eq:completecareaChyp}\end{equation}

where $M=\frac{M_sM_c}{M_t}$ and $M_c$, $M_t$ are given by (\ref{eq:M_c}) and (\ref{eq:M_t}), and $ M_s $ is given by Figure \ref{fig:confighyp} and Figure \ref{fig:confighypdeg}.

For the connected strata $ \cQ(2, -1^2) $ and $ \cQ(2,2) $ the configurations and the corresponding Siegel--Veech constants are given on Figure \ref{fig:confighypdeg}.

\end{proposition}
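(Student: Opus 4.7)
The plan is to repeat verbatim the argument of \S\ref{ssection:computation} with the single modification that one works inside the hyperelliptic component rather than in the full stratum, and to invoke Proposition~\ref{prop:boundhyp} to identify the relevant boundary strata. Concretely, I would start from the component version of the Siegel--Veech formula, equation~(\ref{eq:SVbasecomp}), applied to $\cQ^{hyp}_1(\alpha)$, so that
\[
c_{area}(\cC)=\lim_{\varepsilon\to 0}\frac{1}{2\pi\varepsilon^2}\frac{\Vol_{area}\cQ^{\varepsilon}_1(hyp,\cC)}{\Vol\cQ^{hyp}_1(\alpha)}.
\]
By Proposition~\ref{prop:boundhyp}, the complement of a tubular neighborhood of a $\cC$-collection on a generic hyperelliptic surface is a disjoint union of \emph{hyperelliptic} boundary surfaces, and conversely every choice of hyperelliptic boundary surfaces can be reassembled (by a symmetric surgery) into a hyperelliptic surface. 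So along the locus $\cQ^{\varepsilon,thick}_1(hyp,\cC)$, the local coordinates still split as the boundary coordinates times the cylinder parameters $\cT$ described in \S\ref{sssect:ambiguity}.

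Next I would carry out the same disintegration $d\mu=M'\,d\mu'\,d\nu_T$ as in \S\ref{ssection:computation}, but now with $\mu'$ restricted to the product of hyperelliptic components of the boundary strata $\cH^{hyp}(\alpha_i)$ and $\cQ^{hyp}(\beta_j)$. Since the cylinder-parameter space $\cT$ and its measure $d\nu_T$ given by (\ref{eq:dnut}) are purely local around the collection $\gamma$ and feel nothing of the global hyperelliptic structure, the computations of \S\ref{sssect:compc} and \S\ref{sssect:compcarea} go through unchanged: the same factor $M_c/M_t$ arises, the same integral over $w$, the heights $h_i$, and the twists $t_i$ is performed, and the same beta-type integral $J(a,q)$ of Lemma~\ref{lem:J} yields the factor $(n_S-1)!/(n_S+q+1)!$. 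Combining with the hyperboloid-volume identity $\Vol\cH^{hyp}_{1/2}(\alpha_i)=2^{a_i}\Vol\cH^{hyp}_1(\alpha_i)$ (Lemma~\ref{lem:H1/2}) and the product formula for volumes of disconnected strata yields the claimed formula.

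The only genuinely new point is the determination of the combinatorial constant $M_s$. It still decomposes as $M_s=K/|\Gamma(\cC)|$, where $K$ counts geodesic rays available for the surgeries; however, the group $\Gamma(\cC)$ of symmetries of the configuration must now be computed inside the hyperelliptic component, so additional symmetries coming from the hyperelliptic involution that preserves each boundary piece (cf.\ the proof of Proposition~\ref{prop:boundhyp}) must be factored in. This is what I expect to be the main technical obstacle, and the correct values are what is tabulated in Figures~\ref{fig:confighyp} and~\ref{fig:confighypdeg}. Finally, the exceptional strata $\cQ(2,-1^2)$ and $\cQ(2,2)$ must be excluded because their boundary strata are empty (cf.\ \S\ref{sssection:specialcase}); these are handled separately by the special-case formula of Proposition~\ref{sssection:specialcase}, yielding the constants listed in Figure~\ref{fig:confighypdeg}.
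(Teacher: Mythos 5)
Your proposal is correct and follows essentially the same route as the paper: repeat the computation of \S~\ref{ssection:computation} within the hyperelliptic component using the component version \eqref{eq:SVbasecomp}, invoke Proposition~\ref{prop:boundhyp} to see that the boundary consists of hyperelliptic components so only the volumes $\Vol\cH_1^{hyp}(\alpha_i)$, $\Vol\cQ_1^{hyp}(\beta_j)$ enter, note that the only constant affected by the hyperelliptic involution is $M_s$, and dispose of $\cQ(2,-1^2)$ and $\cQ(2,2)$ via the cylinders-only formula \eqref{eq:completecareacyl}. The one point where the paper goes further than your sketch is that it actually computes $M_s$ for two representative configurations (e.g.\ $M_s=k_1k_2$ for $\cC_1$ on $\cQ^{hyp}(k_1^2,k_2^2)$, since the involution halves the $2k_i$ available rays, and $M_s=k_i$ instead of $k_i^2$ for $\cC_2(k_i)$) before declaring the remaining cases analogous, so you would need to carry out at least such sample computations rather than deferring all values of $M_s$ to the figures.
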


Formula \eqref{eq:completecareaChyp} is applied to each configuration on Figures \ref{fig:confighyp} and \ref{fig:confighypdeg}: it is easy to see that in each case the sum on all admissible configurations gives the known constant \eqref{eq:chyp} for the entire component.

\begin{figure}
\centering
\includegraphics[scale=1]{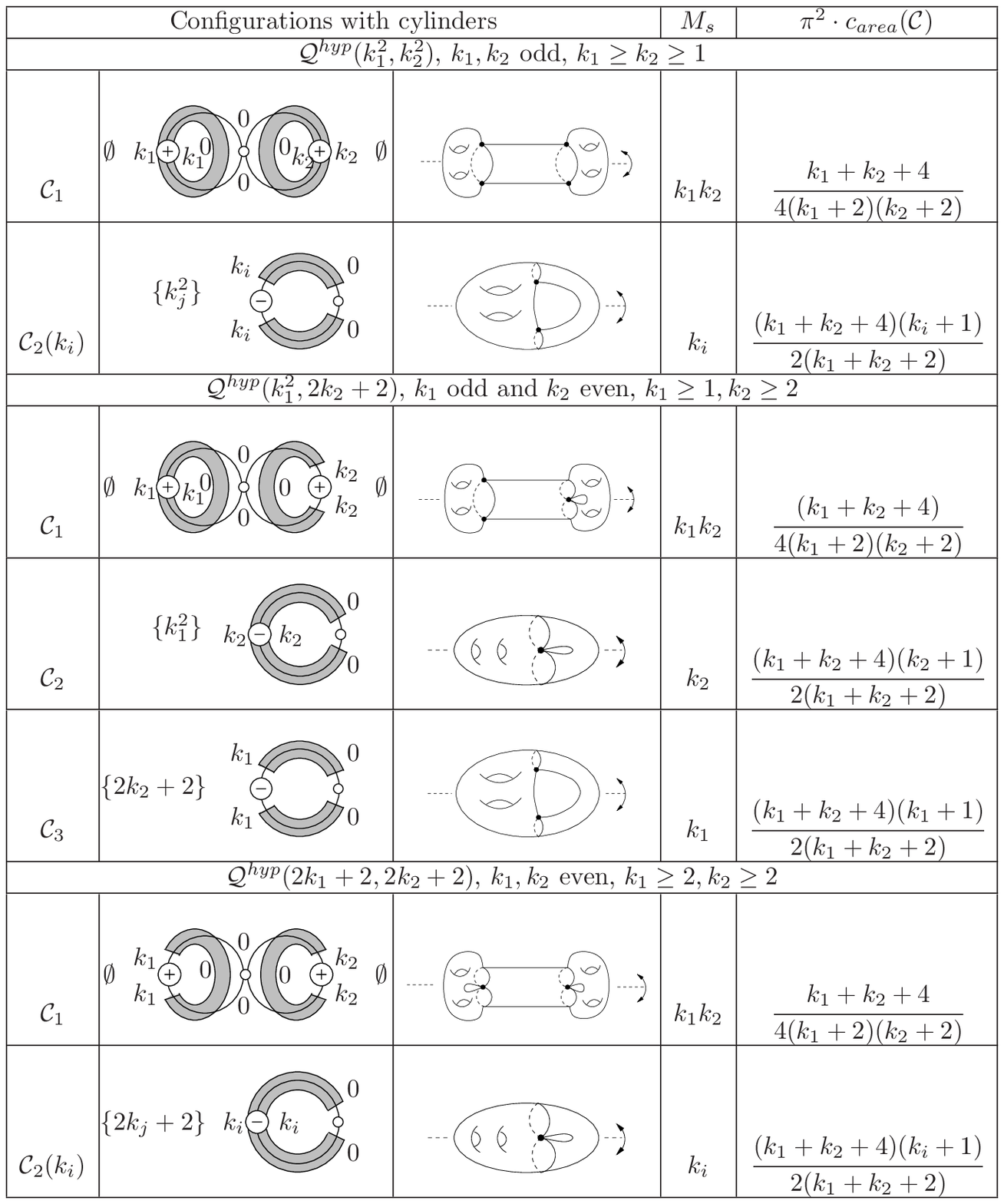}
\caption{Configurations containing cylinders for hyperelliptic components of strata of quadratic differentials. For all these configurations, $N(\cC)=1$.}\label{fig:confighyp}
\end{figure}

\begin{figure}
\centering
\includegraphics[scale=1]{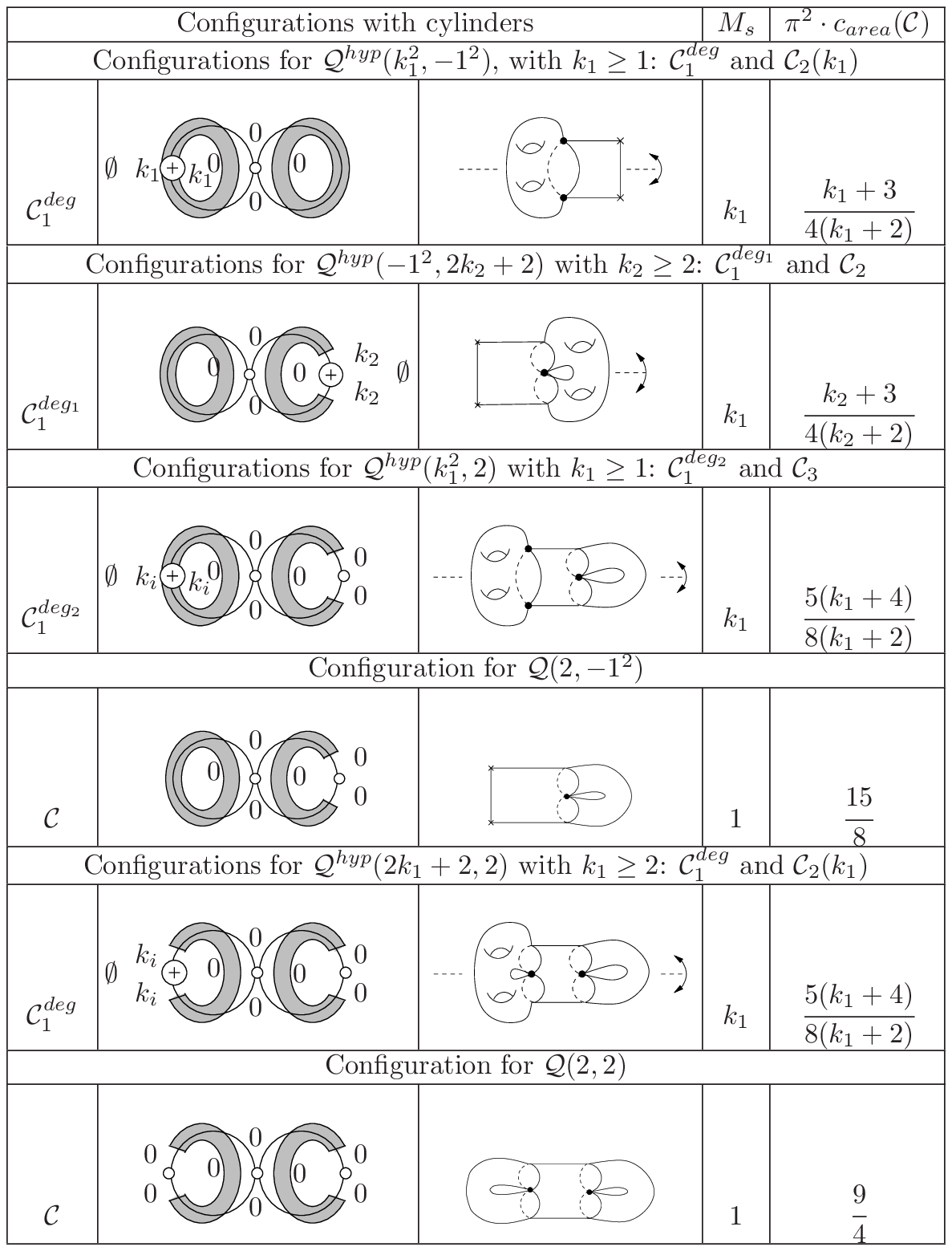}
\caption{Configurations containing cylinders for hyperelliptic components of strata of quadratic differentials in some particular cases. For all these configurations, $N(\cC)=1$.}\label{fig:confighypdeg}
\end{figure}

\begin{proof}
To compute the constants for each configuration, we use the method described in \S~\ref{ssect:noncon}, we follow step by step the computations of \S~\ref{sect:SV} and make only a few adjustments.

First assume that the boundary stratum of $ \cQ^{hyp}(\alpha) $ is not empty.
 Then this boundary is described by Proposition \ref{prop:boundhyp} and consists of hyperelliptic components of the boundary strata of $ \cQ(\alpha) $, so $ \Vol_*\cQ_1^{\varepsilon}(comp,\cC) $ is expressed in terms of $ \prod_i \Vol\cQ^{hyp}(\alpha_i') $. We have to take care of the symmetries induced by the hyperelliptic involution, which only change the constant $ M_s $ giving the number of ways to glue surfaces to cylinders to obtain a configuration.
 
 Consider the configuration $ \cC_1 $ for the component $ \cQ^{hyp}(k_1^2, k_2^2) $. The hyperelliptic involution stabilizes each boundary component $\cH^{hyp}(k_i-1) $ and acts on it as the hyperelliptic involution of the component. For each boundary surface there are $ 2k_i $ (non-oriented) horizontal rays emerging from the singularity, so only $ k_i $ choices for the surgery, since the hyperelliptic involution induces a symmetry of order 2. So for this configuration $ M_s=k_1k_2 $.
 For the configuration $ \cC_2(k_i) $ the hyperelliptic involution exchanges the two newborn singularities, so the two holes in the boundary surface. To perform the two holes surgery on the boundary surface, once we have chosen one of the $ k_i $ horizontal rays emerging from a singularity, we have to take for the other singularity the geodesic ray which corresponds to the the first under the action of the hyperelliptic involution on the boundary surface. So $ M_s=k_i $ (instead of $ k_i^2 $ for the configuration in the general case).
 For the other cases, the result is similar to these two first cases, so we do not repeat the arguments.
 
 If the boundary stratum is empty, that is, the configuration is made only by cylinders, which happens only for the connected strata $ \cQ(2, -1^2) $ and $ \cQ(2,2) $, we apply formula \eqref{eq:completecareacyl} of \S~\ref{sssection:specialcase}.
\end{proof}

\section{Examples of application for strata of small dimension}\label{sect:ex}

In this section we illustrate Theorem \ref{th:ccyl} for strata of small dimension.

\subsection{Volumes}\label{volexample}
Table \ref{tab:vol} gathers data on Siegel--Veech constants and volumes for strata of dimension 4 to 6: the lower dimension strata are $\cQ(-1^4)$ which corresponds to genus 0 and $\cQ(2, -1^2)$ which is hyperelliptic.

 The exact values of Siegel--Veech constants are given for the non-varying strata and the hyperelliptic components of strata. In \cite{CM} Chen and M\"oller define a stratum to be non varying when the sum of the Lyapunov exponents for any Teichm\"uller curve in the stratum is equal to the sum of Lyapunov exponents for the entire stratum. For strata possessing this property they give the value of the sum of exponents, so the value of the Siegel--Veech constant is obtained by applying the result of Eskin--Kontsevich--Zorich \cite{EKZ}.

The approximated values of Siegel--Veech constants are computed using experimental values for the sum of the exponents provided by Anton Zorich.

The exact values of the volumes are extracted from \cite{G}, the approximated ones come from \cite{DGZZ}.

\begin{table}[h!]
\renewcommand{\arraystretch}{1.5}
\[\begin{array}{|c|c|c||c|c|c|}
\hline
\text{Stratum} & \text{Volume} & \pi^2\cdot c_{area} &  \text{Stratum} & \text{Volume}&\pi^2\cdot c_{area} \\
\hline
\hline
\multicolumn{3}{|c||}{\text{Dimension }4} &\multicolumn{3}{c|}{\text{Dimension }6}\\
\hline

 \multicolumn{3}{|c||}{\text{genus }1}& \multicolumn{3}{c|}{\text{genus }1}\\
\hline
\cQ(1^2, -1^2) & \pi^4/3 & 7/3&  \cQ(1^3, -1^3) & 11\pi^6/60&\simeq 2.137\\
\hline
\cQ(3, -1^3) & 5\pi^4/9& 9/5 & \cQ(3, 1, -1^4)&\pi^6/3&59/30\\
\hline
\multicolumn{3}{|c||}{\text{genus }2} &\cQ(2^2, -1^4)& 136\pi^6/45 &\simeq 1.985\\
\hline
\cQ(2^2) & 4\pi^2/3 &9/4&\cQ(5, -1^5)&7\pi^6/10 &27/14\\
\hline
\cQ(5,-1) &28\pi^4/135 &15/7& \multicolumn{3}{c|}{\text{genus }2} \\
\hline
\multicolumn{3}{|c||}{}&\cQ(1^4)&\pi^6/15&19/6\\
\hline

 \multicolumn{3}{|c||}{\text{Dimension }5}& \cQ(3, 1^2, -1)&\pi^6/9& 79/30 \\
\hline
\multicolumn{3}{|c||}{\text{genus }1}& \cQ(2^2, 1, -1) &4\pi^4/5& 29/12\\
\hline
\cQ(2, 1, -1^3) & \pi^4 & 49/24 & \cQ(5,1,-1^2)&7\pi^/30& 97/42\\
\hline
\cQ(4, -1^4)& 2 \pi^4& 11/6 & \cQ(4, 2, -1^2) &28\pi^4/15& 53/24\\
\hline
 \multicolumn{3}{|c||}{\text{genus }2} &\cQ^{hyp}(3^2, -1^2) &\pi^6/30&33/10 \\
\hline
\cQ(2, 1^2)& 2\pi^4/15& 65/24 &\cQ^{non}(3^2, -1^2)&22\pi^6/135&21/10\\
\hline
\cQ(4, 1,-1) & 8\pi^4/15 & 5/2&\cQ(7, -1^3)&27\pi^6/50& 37/18\\ 
\hline
\cQ(3,2,-1)& 10 \pi^4/27 & 87/40 &\multicolumn{3}{c|}{\text{genus }3}\\
\hline
\cQ^{hyp}(6, -1^2)& 8\pi^4/45 & 45/16  &\cQ(7,1)&18\pi^6/175& 49/18\\
\hline
\cQ^{non}(6, -1^2)&32\pi^4/27 & 33/16 &\cQ^{hyp}(6, 2) &16\pi^4/135& 51/16\\
\hline
\multicolumn{3}{|c||}{\text{genus }3}&\cQ^{non}(6, 2) &96\pi^4/135& 39/16 \\
\hline
\cQ(8) & 10 \pi^4/27 &12/5 & \cQ(5,3) &14\pi^6/243& 171/70\\
\hline
\multicolumn{3}{c|}{}&\cQ(4^2) & 4\pi^4/5 &8/3\\
\cline{4-6}
\multicolumn{3}{c|}{}&\cQ^{reg}(9,-1)&\simeq 0.297\pi^6& 51/22\\
\cline{4-6}
\multicolumn{3}{c|}{}&\cQ^{irr}(9,-1)&\simeq 0.064\pi^6 & 63/22\\
\cline{4-6}
\end{array}
\]
\caption{Table of volumes of strata \cite{G} and Siegel--Veech constants \cite{CM}}\label{tab:vol}
\end{table}

We illustrate the main result of this paper on these small dimensional strata. In the case of non-varying strata, we find the known values for the entire strata, in the other cases, we obtain new exact values of Siegel--Veech constants. Note that this procedure can be reversed to obtain volumes from Siegel--Veech constants.

\subsection{Dimension 4}
The only two strata of dimension 4 of genus at least 1 that are not hyperelliptic are $ \cQ(3, -1^3) $ and $ \cQ(5,-1) $, and they are non-varying. For both of these strata we detail all configurations with cylinders and give the corresponding Siegel--Veech constants. We use the values of volumes given above to check the coherence of the formulas for these examples.

\subsubsection{$\cQ(3, -1^3)$}

The configurations are detailed on Figure \ref{fig:tab3iii}. For each of these configurations we give the combinatorial constant $ M $, the corresponding Siegel--Veech constant given by Theorem \ref{th:ccyl}, and the number of configurations for each type.

\begin{figure}[h!]
\centering
\includegraphics[scale=1]{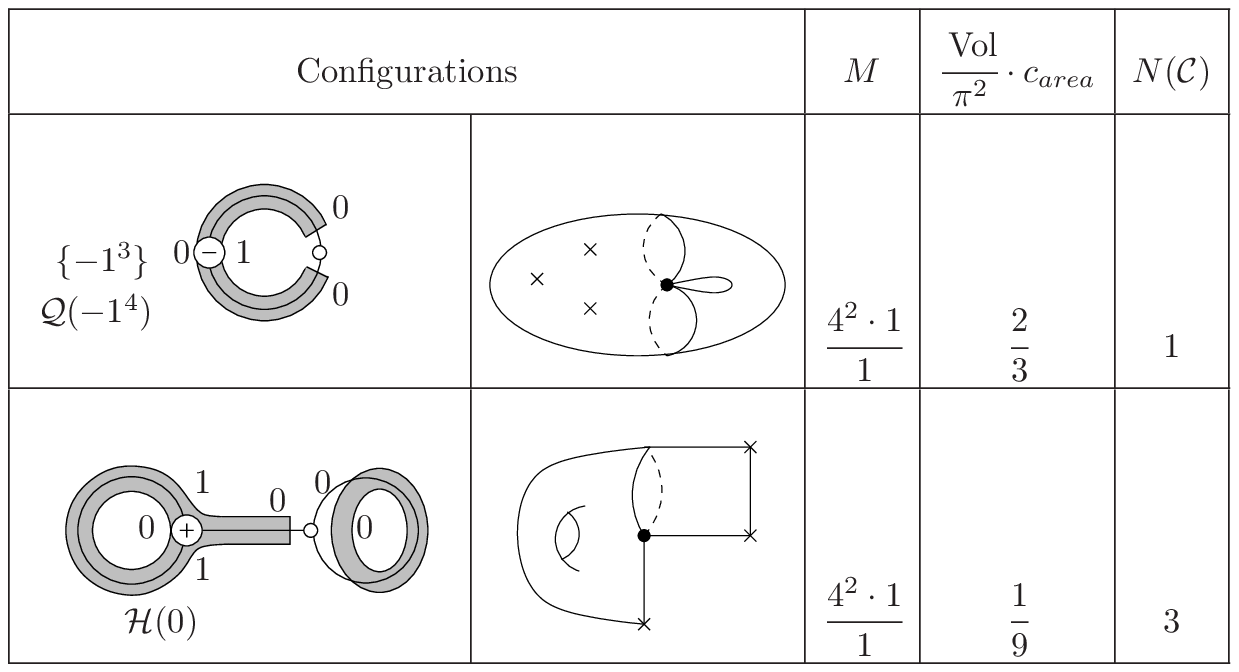}

\caption{Configurations containing cylinders for $\cQ(3, -1^3)$ and associated Siegel--Veech constants}\label{fig:tab3iii}
\end{figure}

Summing on all configurations we obtain:
\[c_{area}(\cQ(3, -1^3))=\cfrac{\pi^2}{\Vol\cQ_1(3, -1^3)}.\]

Using the value of the volume $ \Vol\cQ_1(3, -1^3)=\cfrac{5\pi^4}{9} $ (\S~\ref{volexample}) we get the known value of the Siegel--Veech constant for the stratum.

\subsubsection{$\cQ(5, -1)$}
The only one configuration is given in Figure \ref{fig:tab5i}.
\begin{figure}[h!]
\centering
\includegraphics[scale=1]{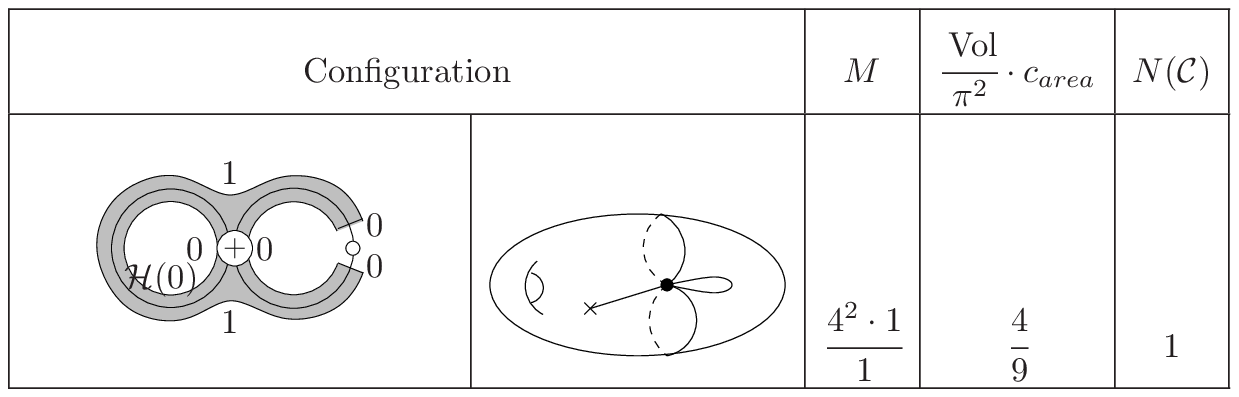}

\caption{Configuration containing cylinders for $\cQ(5, -1)$ and associated Siegel--Veech constant}\label{fig:tab5i}
\end{figure}

Using the value of the volume $ \Vol\cQ_1(5, -1)=\cfrac{28\pi^4}{135} $ given in \S~\ref{volexample}, we obtain the known value of the Siegel--Veech constant for the entire stratum.

\subsection{Dimension 5}
There are seven strata of dimension 5 and genus at least 1. We detail here the configurations for all these strata, except for the stratum $ \cQ(2, 1^2) $ which is hyperelliptic and connected. They are all non-varying.    

\subsubsection{$\cQ(2, 1, -1^3)$}
All configurations with cylinders for this stratum are given on Figure \ref{fig:tab21iii}.
\begin{figure}[h!]
\centering
\includegraphics[scale=1]{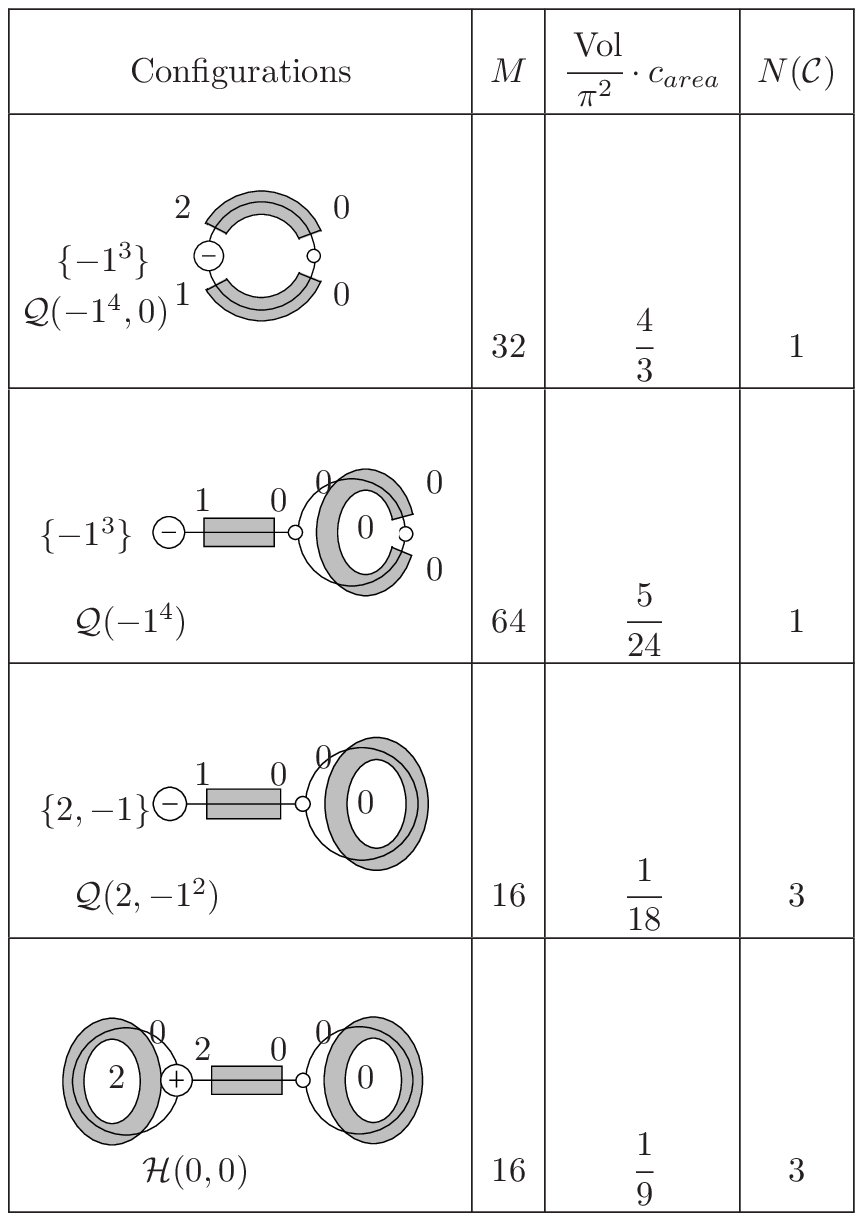}

\caption{Configurations containing cylinders for $\cQ(2,1, -1^3)$ and associated Siegel--Veech constants}\label{fig:tab21iii}
\end{figure}

Summing on all configurations we obtain:
\[c_{area}(\cQ(2, 1, -1^3))=\cfrac{49\pi^2}{24\Vol\cQ_1(2, 1, -1^3)}, \]
which is coherent with the values given in \S~\ref{volexample}.

\subsubsection{$\cQ(4, -1^4)$}
All configurations with cylinders for this stratum are given on Figure \ref{fig:tab4iiii}.
\begin{figure}[h!]
\centering
\includegraphics[scale=1]{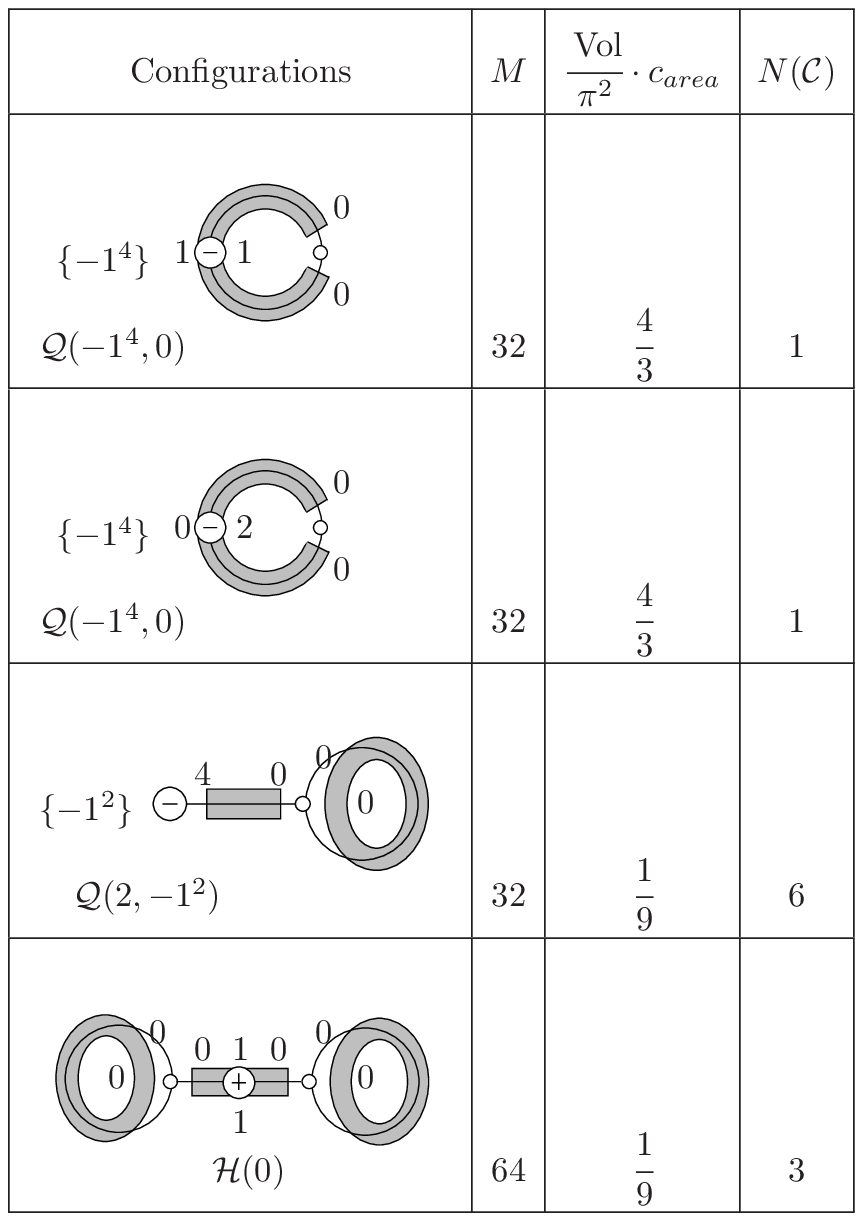}
\caption{Configurations containing cylinders for $\cQ(4, -1^4)$ and associated Siegel--Veech constants}\label{fig:tab4iiii}
\end{figure}

Summing on all configurations we obtain:
\[c_{area}(\cQ(4, -1^4))=\cfrac{11\pi^2}{3\Vol\cQ_1(4, -1^4)}, \]
which is coherent with \S~\ref{volexample}.

\subsubsection{$\cQ(4, 1, -1)$} 
All configurations with cylinders for this stratum are given in Figure \ref{fig:tab41i}.
\begin{figure}[h!]
\centering
\includegraphics[scale=1]{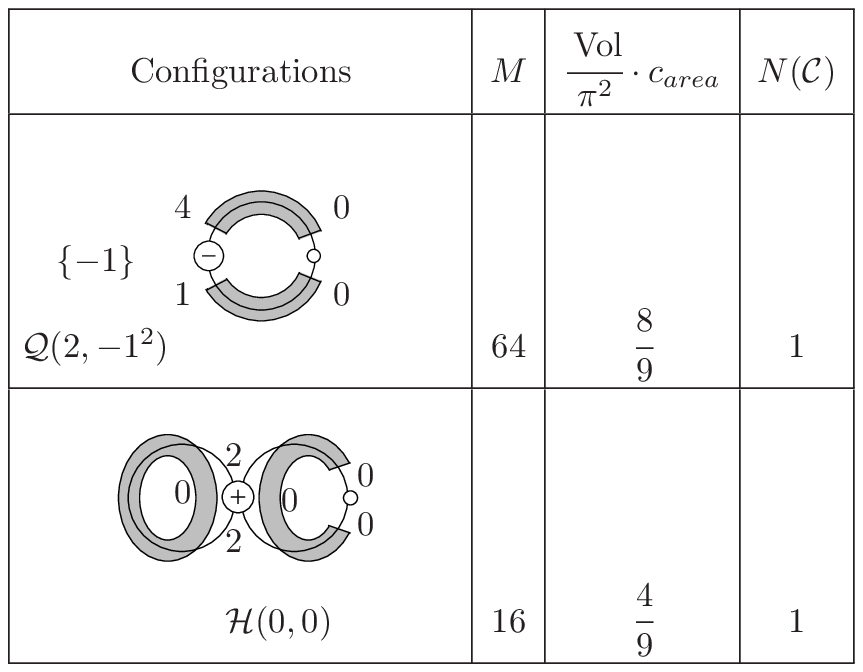}
\caption{Configurations containing cylinders for $\cQ(4,1, -1)$ and associated Siegel--Veech constants}\label{fig:tab41i}
\end{figure}

Summing on all configurations we obtain:
\[c_{area}(\cQ(4,1, -1))=\cfrac{4\pi^2}{3\Vol\cQ_1(4,1,-1)}, \]
which is coherent with \S~\ref{volexample}.

\subsubsection{$\cQ(3, 2, -1)$}
 All configurations with cylinders for this stratum are given in Figure \ref{fig:tab32i}.
\begin{figure}[h!]\begin{center}
\includegraphics[scale=1]{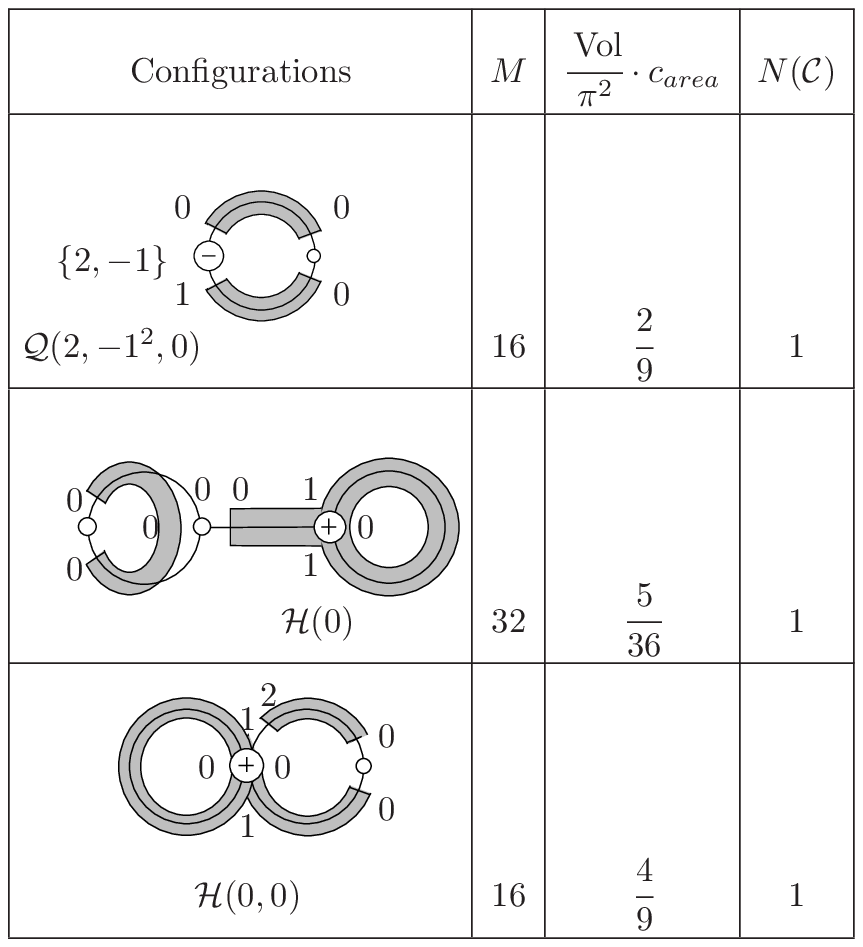}
\end{center}
\caption{Configurations containing cylinders for $\cQ(3,2, -1)$ and associated Siegel--Veech constants}\label{fig:tab32i}
\end{figure}

Summing on all configurations we obtain:
\[c_{area}(\cQ(3,2, -1))=\cfrac{29\pi^2}{36\Vol\cQ_1(3,2,-1)}, \]
which is coherent with \S~\ref{volexample}.

\subsubsection{$\cQ^{non}(6, -1^2)$}
This stratum has two connected components, one hyperelliptic, the other not. Since we have already studied the hyperelliptic component case in \S~\ref{sect:volhyp}, we consider only the remaining component. Admissible configurations for this components are obtained by taking off the hyperelliptic configurations from the list of all configurations for the stratum. We obtain the list presented in Figure \ref{fig:tab6ii}.
\begin{figure}[h!]
\centering
\includegraphics[scale=1]{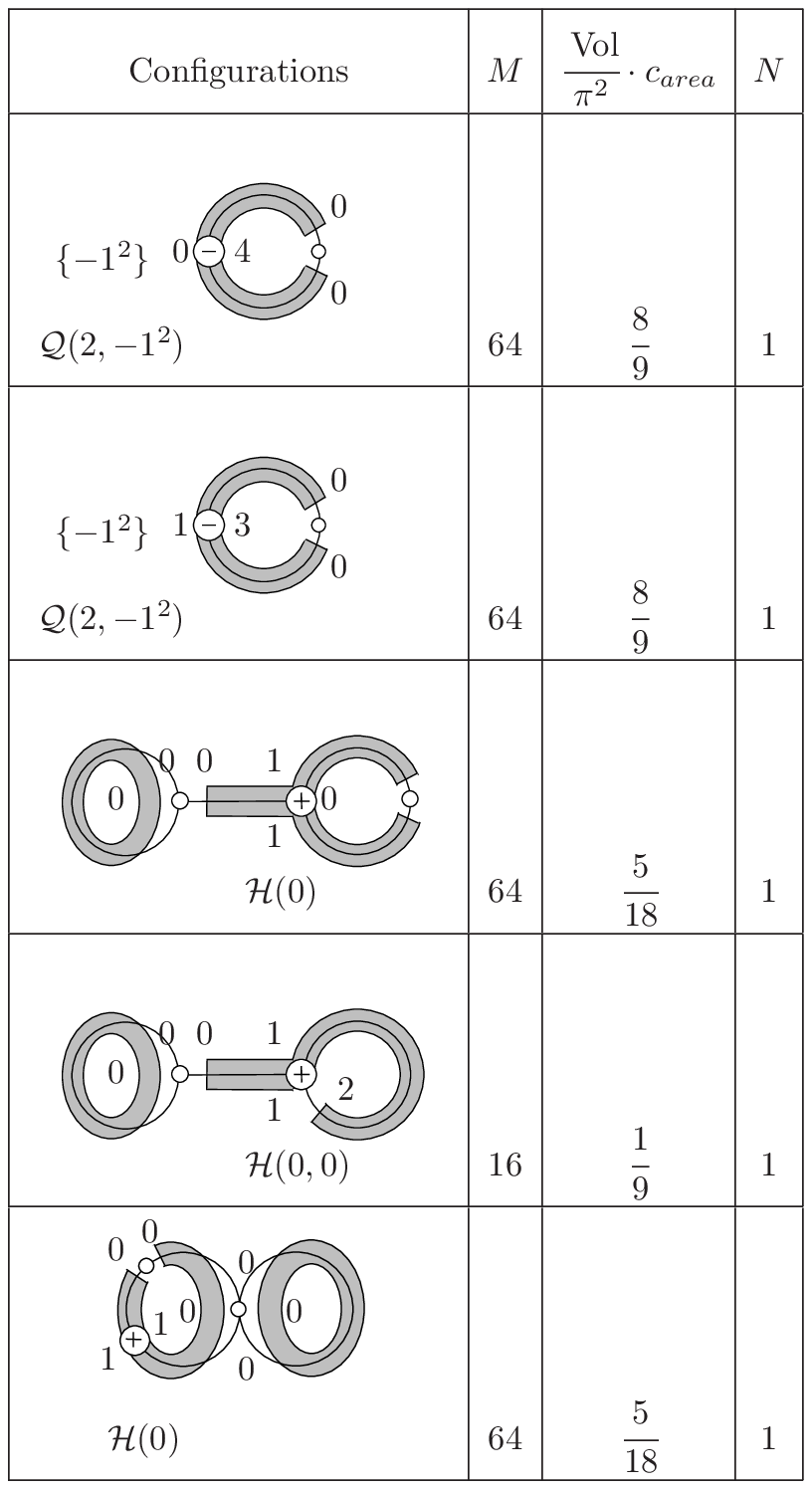}
\caption{Configurations containing cylinders for $\cQ^{non}(6, -1^2)$ and associated Siegel--Veech constants}\label{fig:tab6ii}
\end{figure}

Summing on all configurations we obtain:
\[c_{area}(\cQ^{non}(6, -1^2))=\cfrac{22\pi^2}{9\Vol\cQ_1^{non}(6,-1^2)}, \]
as expected.

\subsubsection{$\cQ(8)$}\label{Q8}
This stratum is non-varying, and its configurations (Figure \ref{fig:tab8}) present extra-symmetries.

\begin{figure}[h!]
\centering
\includegraphics[scale=1]{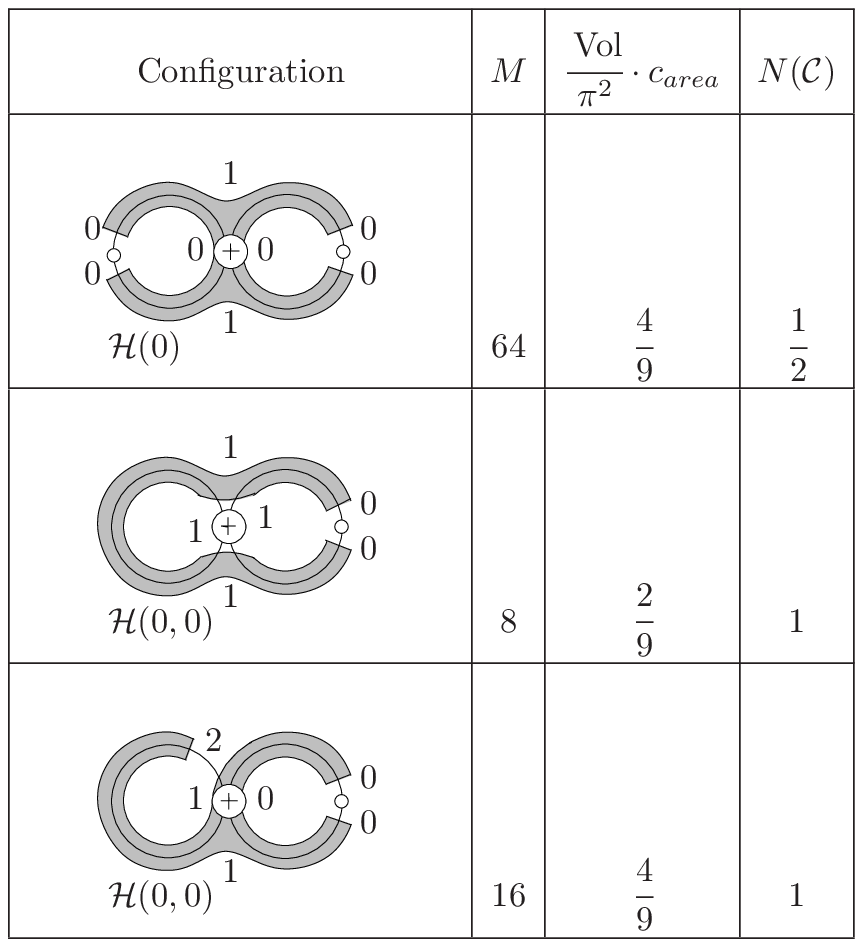}
\caption{Configurations containing cylinders for $\cQ(8)$ and associated Siegel--Veech constants}\label{fig:tab8}
\end{figure}

Summing on all configurations we obtain: 
\[c_{area}(\cQ(8))=\cfrac{8}{9}\cdot\cfrac{\pi^2}{\Vol},\]
as expected.

Note that the first configuration possesses a decorated ribbon graph symmetry that intertwines the two cylinders and stabilizes the boundary surface and the new born zero. That explains the factor $1/2$ for $N(\cC)$ (cf \S~\ref{sect:count}). 

\subsection{Dimension 6}
Here we treat only the varying strata, to obtain new values of Siegel--Veech constants. For the other strata, one can check that the computations are coherent using the values given in \S~\ref{volexample}. The only varying strata in dimension 6 are $\cQ(1^3, -1^3)$ and $\cQ(2^2, -1^4)$. Since the first one is principal and studied in \S~\ref{ss:exapp}, we detail configurations only for the second one.

\subsubsection{$\cQ(2^2, -1^4)$}

The configurations for this stratum are presented on Figure \ref{fig:tab22iiii}.
\begin{figure}[h!]
\centering
\includegraphics[scale=1]{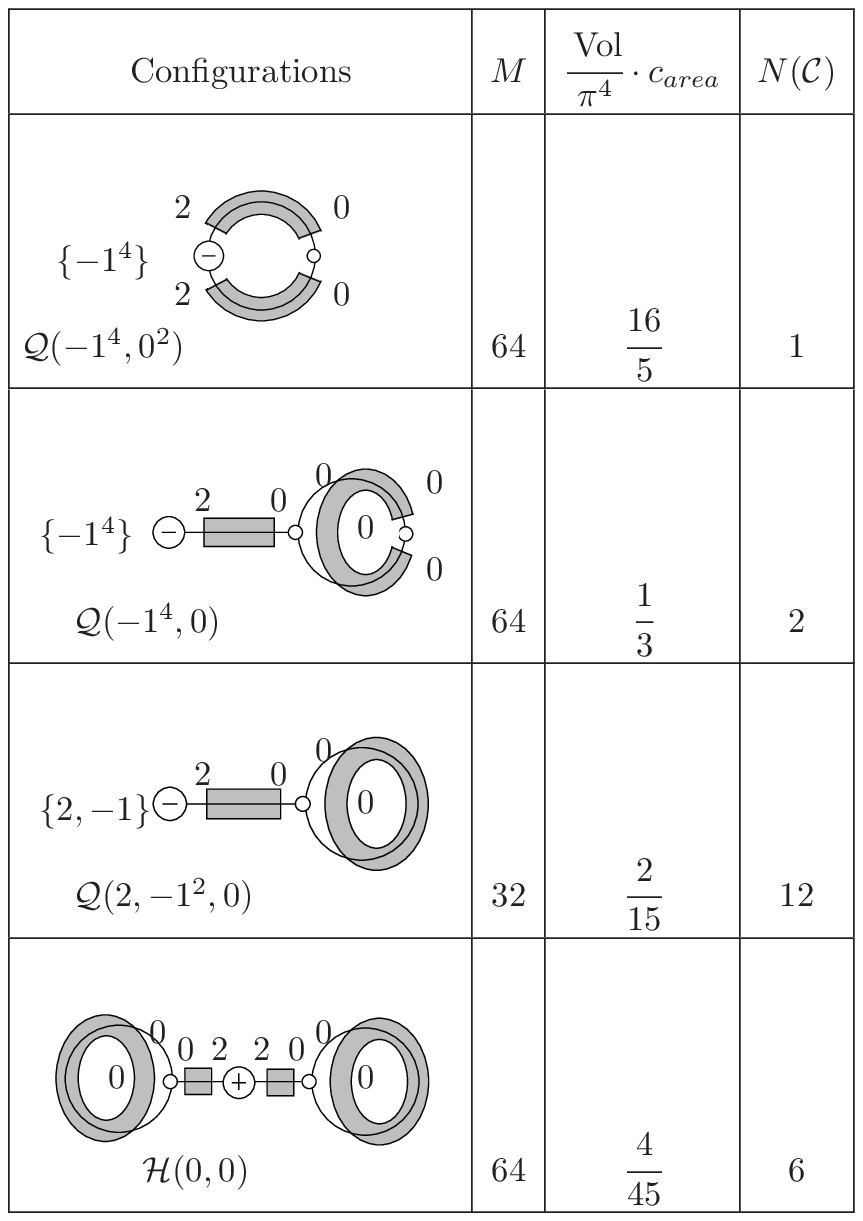}
\caption{Configurations containing cylinders for $\cQ(2^2, -1^4)$ and associated Siegel--Veech constants}\label{fig:tab22iiii}
\end{figure}

Summing on all configurations we obtain
\[c_{area}(\cQ(2^2, -1^4))=6\cfrac{\pi^4}{\Vol\cQ_1(2^2, -1^4)}=\cfrac{135}{68\cdot \pi^2}, \]
which corresponds with the approximate value coming from the Lyapunov exponents.

\appendix
\section{Geometry of configurations containing cylinders}\label{sect:geom}

This appendix develops the quadratic version of some geometric results on configurations, proven in the Abelian case in \cite{BG}.

\subsection{Variants of Siegel--Veech constants}
The result (\ref{eq:careaandc}) of Theorem \ref{th:ccyl} can be interpreted as follows: the ratio $ \cfrac{c_{area}(\cC)}{c_{cyl}(\cC)} $ represents the mean area of a cylinder in configuration $\cC$. It does not depend on the configuration, but only on the dimension of the stratum. Summing on all configurations in a stratum we get a result of Vorobets (Theorem 1.6 in \cite{Vo}).

We introduce variants of Siegel--Veech constants whose ratios admit a geometric interpretation. Some of them were introduced by Vorobets.

 We define $ N_{A_1\geq p}(S, \cC, L) $ (resp. $ N_{A\geq p}(S, \cC, L) $) that counts configurations $\cC$ on $S$ only if the area of a fixed cylinder (resp. all cylinders) fills at least a proportion $p$ of the area of the entire surface. As before we denote \[c_*(\cC)=\lim\limits_{L\to\infty}\frac{N_*(S, \cC, L)\cdot(\mbox{Area of }S)}{\pi L^2}\] the associated Siegel--Veech constants.

We give the analogue of Theorems
4 and 5 of \cite{BG}. Proofs are very similar to the Abelian case so we keep them short.

We introduce the {\it incomplete Beta function} \[B(x;n,q)=\int_0^{x}u^{n-1}(1-u)^{q-1}\dd u\] and the {\it Beta function} $B(n,q)=B(1; n,q)$. It is a standard fact that \[B(x; n,q)=B(n,q)\sum_{k=n}^ {n+q-1}{n+q-1\choose k}x^ k(1-x)^ {n+q-1-k}.\]

\begin{theorem}Let $\cC$ be an admissible configuration for a connected stratum $\cQ(\alpha)$ of quadratic differentials. Let $q$ denote the total number of cylinders. Assume that the boundary stratum $\cQ(\alpha')$ is non empty, and $q\geq 1$. Then the ratios of Siegel--Veech constants associated to $\cC$ are the following:
\begin{eqnarray}\cfrac{c_{A>p}(\cC)}{c(\cC)}& = & \frac{B(1-p; n_S,q)}{B(n_S,q)}\label{eq:cAp}\\
\cfrac{c_{A_1>p}(\cC)}{c(\cC)} & = & (1-p)^{\dim_\C\cQ(\alpha)-2}\label{eq:cA1p}\end{eqnarray}
\end{theorem}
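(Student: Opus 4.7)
The plan is to mimic the computation of $c(\cC)$ in \S~\ref{sssect:compc} but insert into the integral an indicator function encoding the area constraint, and then to evaluate the resulting modified integrals.

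First I translate the area constraints into the coordinates $(r_S, r_T, w, h_1, \dots, h_q, t_1, \dots, t_q)$ on the cone $C(\cQ_1^\varepsilon(\cC))$ used in \S~\ref{sssect:compc}. A surface $S$ in the cone has total area $(r_S^2+r_T^2)/2$ and cylinder part of area $r_T^2/2$, so the total cylinder area fraction equals $r_T^2/(r_S^2+r_T^2)$. Hence the constraint $A>p$ is purely a restriction on the $r_T$-range: $r_T\geq r_S\sqrt{p/(1-p)}$, forcing $r_S\in[0,\sqrt{1-p}]$. For a fixed cylinder, say cylinder $1$, the individual area fraction equals $\beta_1\,r_T^2/(r_S^2+r_T^2)$ where $\beta_1=h_1/h$ (thin case) or $\beta_1=2h_{q_1+1}/h$ (thick case) is the weight of cylinder $1$ in the height-sum $h$.

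For formula \eqref{eq:cAp}, the integrand is unchanged from the $c(\cC)$ computation and only the $r_T$-range shrinks, so after the substitution $u=r_T^2,\ v=r_S^2$ one is left with
$\int_0^{1-p} v^{n_S-1}\int_{vp/(1-p)}^{1-v}u^{q-1}(u+v)\,du\,dv,$
taken over the region $\{u+v\leq 1,\ u/(u+v)\geq p\}$. The key step is the substitution $s=u+v,\ t=u/(u+v)$: the region becomes the product $[0,1]\times[p,1]$, and the integrand factorizes as $s^{n_S+q}(1-t)^{n_S-1}t^{q-1}$. The $s$-integral is a constant independent of $p$, and the $t$-integral, after $t\mapsto 1-t$, is exactly $B(1-p;n_S,q)$. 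Dividing by the $p=0$ case gives \eqref{eq:cAp}.

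For formula \eqref{eq:cA1p}, the constraint $\beta_1>c$ with $c=p(r_S^2+r_T^2)/r_T^2$ couples to the $h_i$-integration inside $Cusp(\tilde\varepsilon/r_T)$. Changing variables from $(h_1,\dots,h_q)$ to $(h,\beta_1,\dots,\beta_{q-1})$ with Jacobian $h^{q-1}/2^{q_2}$, the $h$-integration constraint $w/(2\varepsilon'^2)\leq h\leq 1/(2w)$ is independent of the $\beta_i$'s, so the effect of the $\chi\{\beta_1>c\}$ factor is simply to replace the full simplex volume $1/(q-1)!$ by the sub-simplex volume $(1-c)^{q-1}/(q-1)!$. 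Hence $Cusp^{A_1>p}(\tilde\varepsilon/r_T)=(1-c)^{q-1}Cusp(\tilde\varepsilon/r_T)$. Threading this through the $r_T$-integration and then the $r_S$-integration, the rescaling $r_S=\sqrt{1-p}\cdot s$ converts the resulting integral over $r_S\in[0,\sqrt{1-p}]$ into $(1-p)^{n_S+q-1}$ times the $p=0$ integral over $s\in[0,1]$. Since $\dim_\C\cQ(\alpha)-2=n_S+q-1$, this yields \eqref{eq:cA1p}.

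The main obstacle is identifying the correct change of variables. For \eqref{eq:cAp}, the substitution $s=u+v,\ t=u/(u+v)$ both linearizes the integration region and factorizes the integrand, which is what produces a clean incomplete Beta function; without this substitution the double integral looks messy. For \eqref{eq:cA1p}, the crucial observation is that the area-fraction constraint $\beta_1>c$ decouples completely from the $(w,h)$ integration once one passes to simplex coordinates, so that the modification of $Cusp$ is just a multiplicative factor $(1-c)^{q-1}$; the remaining $r_T$-integration is then almost the same as in \S~\ref{sssect:compc} and collapses under a simple rescaling.
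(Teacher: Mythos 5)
Your overall strategy is exactly the paper's: keep the disintegration of \S~\ref{ssection:computation}, note that for $c_{A>p}$ only the $r_T$-range shrinks to $r_T\geq\sqrt{p/(1-p)}\,r_S$ (forcing $r_S\leq\sqrt{1-p}$), and that for $c_{A_1>p}$ the constraint enters through $Cusp$, which gets multiplied by $(1-a)^{q-1}$ with $a=p\,(r_S^2+r_T^2)/r_T^2$; this is precisely how the paper proceeds, and your treatment of \eqref{eq:cAp} via $u=r_T^2$, $v=r_S^2$, $s=u+v$, $t=u/(u+v)$ is correct and in fact makes explicit the ``appropriate change of variables'' that the paper leaves unstated. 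The identification of the cylinder-$1$ area fraction ($h_1/h$, resp.\ $2h_{q_1+1}/h$) and the simplex argument giving the factor $(1-c)^{q-1}$ are also fine.

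The one step that does not work as written is the last one for \eqref{eq:cA1p}: the single rescaling $r_S=\sqrt{1-p}\,s$ does \emph{not} convert the modified integral into $(1-p)^{n_S+q-1}$ times the $p=0$ integral. After that substitution the inner $r_T$-limits become $p$-dependent in a way that does not reproduce the $p=0$ domain (in the variables $u,v$ the region is $\{u+v\leq 1,\ u\geq p(u+v)\}$, which a dilation of $v$ alone does not map to the simplex), and the weight $\bigl(1-p\,(u+v)/u\bigr)^{q-1}$ does not turn into a constant factor. The correct route is the same $(s,t)$ substitution you already used for \eqref{eq:cAp}: the integrand becomes $s^{n_S+q}(1-t)^{n_S-1}\,(t-p)^{q-1}$ on $[0,1]\times[p,1]$ (since $t^{q-1}(1-p/t)^{q-1}=(t-p)^{q-1}$), and then the affine substitution $t=p+(1-p)\tau$ (equivalently, the shear $w_1=u-p(u+v)$, $w_2=v$ followed by the homothety by $1-p$ in \emph{both} variables) yields
\begin{equation*}
\int_p^1(1-t)^{n_S-1}(t-p)^{q-1}\,\dd t=(1-p)^{n_S+q-1}B(n_S,q),
\end{equation*}
which gives $(1-p)^{n_S+q-1}=(1-p)^{\dim_\C\cQ(\alpha)-2}$ as claimed. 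With this replacement your argument is complete and coincides with the paper's proof.
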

The first ratio can be interpreted as the probability for the cylinders to fill a large part of the area of the surface, and the second ratio the probability for a distinguished cylinder to fill a large part of the area of the surface. Note that the first ratio depends on the number of cylinders $q$ in the configuration, as the second ratio depends only of the dimension of the stratum.

\begin{proof}
We begin with the proof of (\ref{eq:cAp}). We follow step by step the computations of \S~\ref{sssect:compc}. The value of $Cusp(\varepsilon)$ does not change. The only adjustment to make is that the area of the surface $ r_TT $ which we glue to $ r_SS' $ has to satisfy $r_T^2> p(r_T^2+r_S^2)$, which is equivalent to $r_T> \sqrt{\frac{p}{1-p}}r_S$. So (\ref{eq:nut}) becomes
\begin{equation*}\nu_{T}^{A>p}(\Omega(\varepsilon,r_S))=\frac{M_c\pi\varepsilon^2}{M_t2^{q}(q-1)!}\int_{\sqrt\frac{p}{1-p}r_S}^{\sqrt{1-r_S^2}}r_T^{2q-1}(r_S^2+r_T^2)\dd r_T.\end{equation*}
and using the constraint $ r_T^2+r_2^2\leq 1 $ we obtain the following bound on $r_S$: $r_S\leq \sqrt{1-p}$, so (\ref{eq:mu}) becomes 
\begin{equation*}\mu^{A>p}(C(\cQ_1^\varepsilon(\cC))) =\frac{M\Vol(\cQ_1(\alpha'))\pi\varepsilon^2}{2^{q+1}(q+1)!}\underbrace{\int_0^{\sqrt{1-p}} r_S^{2n_S-1}\int_{\sqrt\frac{p}{1-p}r_S}^{\sqrt{1-r_S^2}}r_T^{2q-1}(r_S^2+r_T^2)\dd r_T\dd r_S}_{I_p}
\end{equation*}
Using an appropriate change of variables as the Abelian case, we recognize
\[I_p=\frac{B(1-p; n_S,q)}{4(n_S+q+1)}\] where $B(1-p; n_S,q)$ is the incomplete Beta function. Comparing the result to (\ref{eq:cSV}) we get (\ref{eq:cAp}).

Now we compute $ c_{A_1>p}(\cC) $: we have the same constraints as before, plus the additional constraint that the first cylinder fills at least a part $p$ of the area of the surface. This affects the calculus of $Cusp$. Note that a cylinder in $S\in\cQ_1(\alpha)$ fills at least a part $p$ of the surface if it fills at least part $a=p\cdot \frac{r_S^2+r_T^2}{r_T^2}$ in the space of the cylinders $\cT_1$. So we have to replace $Cusp(\varepsilon)$ by \begin{multline*}Cusp^{A_1>a}(\varepsilon)=2(q+1)\nu_{T}^{A_1>p}(C(\cT_1^{\varepsilon}))\\=2(q+1)\frac{M_c}{M_t}\pi\int_{0}^{\frac{\epsilon}{2}}
w^{q+1}\dd w\int_{\R_{+}^{q}}  \chi{\left\{\cfrac{w}{2\varepsilon^2}\leq h'\leq \cfrac{1}{2w}\right\}}  \chi{\left\{h_1\geq ah'\right\}} \dd h_1\dots \dd h_q'.\end{multline*}
Using the change of variables $h_1'=h_1-ah$ we get:
\[Cusp^{A_1>a}(\varepsilon)=Cusp(\varepsilon)\cdot (1-a)^{q-1}\]
Note that if we choose another cylinder, the computations are exactly the same, even if it is a thick cylinder.
Finally (\ref{eq:mu}) becomes
\begin{multline*}\mu^{A_1>p}(C(\cQ_1^\varepsilon(\cC))) =\frac{M\Vol(\cQ_1(\alpha'))\pi\varepsilon^2}{2^{q+1}(q+1)!}\\ \cdot\underbrace{\int_0^{\sqrt{1-p}} r_S^{2n_S-1}\int_{\sqrt\frac{p}{1-p}r_S}^{\sqrt{1-r_S^2}}r_T^{2q-1}(r_S^2+r_T^2)\left(1-p\cdot\frac{r_S^2+r_T^2}{r_T^2}\right)\dd r_T\dd r_S}_{I_p'}
\end{multline*}
and we get \[I'_p=\frac{(1-p)^{n_S+q-1}}{4(n_S+q+1)}\cdot B(n,q).\]
Comparing the result to (\ref{eq:cSV}) we get (\ref{eq:cA1p}).
\end{proof}

\subsection{Maximal number of cylinders}\label{ssect:maxcyl}

Configurations of quadratic differentials in genus $0$ are detailed in \cite{AEZ}. They contain at most one cylinder. The following proposition gives the maximal number of cylinders in a configuration in higher genus.

\begin{proposition}\label{prop:nbermaxcylquad}
Consider a stratum $Q(\alpha)$ in genus $g\geq 1$, with $\alpha=(4l_1, \dots, 4l_m, 4k_1+2, \dots, 4k_n+2, b_1, \dots, b_p, -1^k)$, and $l_i\geq 0$, $k_i\geq 0$, $b_i$ odd. 
First assume that $2n+\displaystyle{\sum_{i=1}^{p}}b_i -k+4\geq 0$, then the maximal number of \^homologous cylinders satisfies:
$$ q_{max}(\alpha)=\lfloor\frac{n}{2}\rfloor+m+\epsilon_{\alpha},$$ where $\epsilon_{\alpha}\in\{0,1,2\}$.

Without this assumption, the maximal number of \^homologous cylinders is given by:
$$\begin{array}{ccc}q_{max}(\alpha)&=&\max\{\card I+\cfrac{\card J}{2};\; I\subset \{1, \dots, m\}, J\subset \{1,\dots n\}, \card J \;\textrm{even},\\&& 4\displaystyle\sum_{i\in I}l_i+4\sum_{j\in J}k_j+2n+\sum_{k=1}^{p}b_k+4-k\geq 0\}+\epsilon_{\alpha}\end{array}$$

\end{proposition}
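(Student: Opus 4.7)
The plan is to reduce the question to a combinatorial optimization over Masur--Zorich decorated diagrams and then to exhibit extremal configurations. First, I would use the fact that a \^homologous configuration on a surface of $\cQ(\alpha)$ is encoded by a decorated global graph of type $a$--$e$ from Figure~\ref{fig:fig3MZ} together with a local ribbon graph from Figure~\ref{fig:fig6MZ} at each non-cylinder vertex; the number of cylinders equals the number of $\circ$-vertices, and compatibility with $\alpha$ requires that the interior singularities of the boundary surfaces together with the newborn zeros (with cone angles $\pi\sum(k_i+1)$ computed from the ribbon graphs) recover the multiset $\alpha$. A case-by-case inspection of the local ribbon graphs then yields a parity dichotomy for which zeros may serve as corners of cylinders: a zero of order $4l_i$ can be the unique newborn zero adjacent to a cylinder (via, for instance, a $\oplus 2.1$ surgery on a surface of trivial holonomy), contributing one cylinder apiece; a zero of order $4k_j+2$ must be paired with a second zero of the same class (via two matching $\ominus$-type local ribbon graphs on the two sides of a cylinder), so each such pair contributes one cylinder; odd-order zeros and poles can only appear as interior singularities of the boundary surfaces, never as corners of cylinders. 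These three observations give the naive count $|I|+|J|/2$ with $|J|$ even.

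Second, I would verify the realizability constraint linking $(I,J)$ to the non-emptiness of the boundary stratum. Tracking the zeros left interior to the boundary surfaces together with the newborn zeros produced by the cylinder surgeries, the relation $\sum\alpha'_i-k'=4g'-4$ combined with $g'\geq 0$ rearranges into the stated inequality
\[
4\sum_{i\in I}l_i+4\sum_{j\in J}k_j+2n+\sum_{k=1}^{p}b_k+4-k\geq 0.
\]
Taking $I=\{1,\dots,m\}$ and $J=\{1,\dots,n\}$ the left-hand side equals $4g\geq 0$, while $I=J=\emptyset$ gives exactly the hypothesis $2n+\sum b_k-k+4\geq 0$ of the first part. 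Under that hypothesis, the maximum is attained at the generic choice $|I|=m$, $|J|=2\lfloor n/2\rfloor$, yielding $\lfloor n/2\rfloor+m$. The bound is realized concretely by picking a global graph of type $a$ whose cylinders each receive the corresponding $\oplus 2.1$ or paired $\ominus$ local ribbon graphs and by placing the remaining singularities on the terminal boundary surfaces; a straightforward check that the resulting decorated diagram is admissible in the sense of \cite{MZ} gives an actual surface in $\cQ(\alpha)$ realizing the extremal count.

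The main obstacle I anticipate is the precise determination of the correction $\epsilon_\alpha\in\{0,1,2\}$. I expect it to encode sporadic insertions of extra cylinders permitted either by hyperelliptic symmetries (which can glue in an additional thin cylinder along an axis of the hyperelliptic involution) or by a $\ominus 2.2$ ribbon graph pairing two odd-order zeros into an auxiliary corner; its value should be determined by a finite case analysis depending on $k\bmod 4$ and on the parity pattern of $(b_1,\dots,b_p)$, combined with Boissy's classification~\cite{B} of configurations in the hyperelliptic components to rule out further spurious extra cylinders.
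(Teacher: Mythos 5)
Your reduction to the Masur--Zorich decorated graphs is the right frame, and the intended count $\card I+\card J/2$ matches the paper's, but the ``parity dichotomy'' on which both of your bounds rest is wrong at the decisive point. You assert that odd-order zeros and poles can never be newborn zeros adjacent to cylinders. In fact they can: odd zeros can be created by $\ominus$ surfaces or by loops of the global graph, and poles are created by pinching a boundary of a cylinder --- this is exactly the content of Lemma~\ref{lem:odd:zeros} and of Figures~\ref{fig:creating1} and~\ref{fig:creatingpole}; the configurations $\cC_1$--$\cC_4$ for $\cQ(1^k,-1^l)$ all have cylinders bounded by newborn zeros of order $1$ or newborn poles, although $m=n=0$ there. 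What the upper bound $q_{max}\le m+\lfloor n/2\rfloor+2$ actually requires is the quantitative statement that \emph{at most four} odd zeros can ever be newborn (so that, together with the finitely many ways of closing up the global graph, odd zeros and poles contribute only the bounded defect $\epsilon_\alpha\le 2$); with your stronger (false) claim you have no control at all on the contribution of the $b_i$'s and of the $k$ poles, and your own later suggestion that $\epsilon_\alpha$ could come from a $\ominus$-type corner formed by two odd zeros contradicts it. Your extremal construction is also defective: you pair the zeros of order $4k_j+2$ ``via two matching $\ominus$-type local ribbon graphs on the two sides of a cylinder'', but the admissible global graphs of Figure~\ref{fig:fig3MZ} contain at most two $\ominus$ vertices, so this device cannot yield $\lfloor n/2\rfloor$ cylinders for large $n$. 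The paper's construction instead builds an arbitrarily long chain of \emph{trivial-holonomy} surfaces separated by cylinders: a local graph $+2.1$ creates a single newborn zero of order $4g'$ (which is also the mod-$4$ reason a $4k_j+2$ zero cannot be realized alone and must be paired), and a local graph $+2.2$ creates two newborn even zeros with orders summing to $4g'$, one such surface per pair of $4k_j+2$ zeros; this is what makes the main term $m+\lfloor n/2\rfloor$ attainable.

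Two further points. Your derivation of the realizability constraint (``$\sum\alpha_i'-k'=4g'-4$ with $g'\ge 0$ rearranges into the stated inequality'') does not go through as written: a Gauss--Bonnet count of the leftover singularities alone produces sums over the complements of $I$ and $J$ and the term $2(n-\card J)$, not the stated sums over $I$ and $J$ with $2n$; to recover the stated condition you must also account for the genus consumed by the newborn-zero surfaces ($l_i$ for a $+2.1$ surface realizing $4l_i$, and $k_{j_1}+k_{j_2}+1$ for a $+2.2$ surface realizing a pair), i.e.\ compare against the total genus budget $g$. Finally, your plan for $\epsilon_\alpha$ via hyperelliptic symmetries and Boissy's classification is off-track: hyperellipticity plays no role in this statement; in the paper $\epsilon_\alpha$ is just the bounded effect of embedding the chain into one of the finitely many global graph types $a)$--$e)$ while accommodating at most one leftover even zero, the odd zeros and the poles, and bounding it by $2$ uses precisely the ``at most four newborn odd zeros'' lemma that your argument is missing.
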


To prove this proposition we will need the following lemma:
\begin{lemma}\label{lem:odd:zeros}Odd zeros are created by surfaces of non trivial holonomy $\ominus$ or by loops in the graphs of configurations. At most four newborn odd zeros can be created in a configuration.
\end{lemma}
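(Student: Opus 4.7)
The plan is to translate the statement into a parity bookkeeping on the boundary decorations of the ribbon graph, and then exhaust the short list of admissible global graph topologies. First I would recall from \S~\ref{ssect:graph} that a newborn zero associated to a boundary component of the global ribbon graph has cone angle $\pi\sum_i(k_i+1)$, hence order $\sum_i(k_i+1)-2$, where the $k_i$'s decorate the boundary singularities visited by that component. The newborn zero is therefore of odd order if and only if an odd number of the $k_i$'s are even. The question thus reduces to identifying which decorations $k_i$ in the local ribbon graphs of Figure~\ref{fig:fig6MZ} can be even, and how they are distributed among the boundary components of the global graph.

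The key parity observation, verified case by case on Figure~\ref{fig:fig6MZ}, is that at a $\oplus$ vertex (trivial holonomy) the ambient zero has even order and all the decorations $k_i$ of the attached local ribbon graph are odd. Intuitively, the saddle connections lift to oriented pairs on the orientation double cover and the horizontal rays in each angular sector between two \^homologous saddle connections are counted with a parity compatible with the $\Z/2\Z$-grading. Consequently an even $k_i$ can arise only at a $\ominus$ vertex, or at a boundary singularity lying on a loop of the admissible global graph: in the latter case the same saddle connection is visited twice by a single boundary component of the ribbon graph, and two odd $k_i$'s merge into an even contribution in the parity sum. This yields the first assertion of the lemma.

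To obtain the bound of $4$, I would enumerate the five shapes a)--e) of admissible graphs in Figure~\ref{fig:fig3MZ}. In each shape the total number of $\ominus$ vertices plus loops is at most $2$ (there are at most two boundary surfaces carrying non-trivial holonomy, and at most two edges forming cycles in the graph), and each such feature contributes at most two even decorations $k_i$ to the boundary circuits of the ribbon graph. A short combinatorial count then shows that the number of boundary components of the ribbon graph visiting an odd number of even $k_i$'s — that is, the number of odd newborn zeros — is at most $4$.

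The main obstacle will be the parity claim for $\oplus$ vertices: it requires carefully translating the ``all $k_i$ odd'' statement through the Masur--Zorich conventions for the local ribbon graphs, together with the bookkeeping in the graph shapes containing a loop, where one must verify that the double-traversal of a saddle connection genuinely collapses two odd contributions into a single even one in the parity sum, and that no pathological reassembly can generate a fifth odd newborn zero.
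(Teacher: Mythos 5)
Your reduction of the parity of a newborn zero to the count of even decorations along its boundary component of the ribbon graph (order $=\sum_i(k_i+1)-2$, hence odd iff an odd number of the $k_i$ met are even) is fine, but the key claim on which everything afterwards rests is false: it is \emph{not} true that all decorations $k_i$ at a $\oplus$ vertex are odd, nor that even $k_i$'s occur only at $\ominus$ vertices or along loops. First, every cylinder sector contributes an angle $\pi$, i.e.\ a decoration $k=0$, which is even; so even decorations appear at every newborn zero adjacent to a $\circ$ vertex, independently of $\ominus$ vertices and loops. Second, $\oplus$ vertices do carry even decorations for several local types of Figure~\ref{fig:fig6MZ}: for instance for the type $+2.2$ (creating a hole at a zero of an Abelian differential), a boundary component consisting of a single closed saddle connection on a trivial-holonomy piece must have corner angle $(k+1)\pi$ with $k$ \emph{even} (otherwise the linear holonomy around that loop would be $-\pi$), and indeed the paper records that such a piece produces newborn zeros of even order $k_1,k_2$ equal to these decorations. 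A concrete illustration that your bookkeeping mislocates the source of odd parity is configuration $\cC_3$ of \S~\ref{sect:princ} (Figure~\ref{fig:1k-1lconfig3}): the two newborn zeros of order $1$ there are adjacent only to the torus ($\oplus$, decoration $1$, odd) and to a thick cylinder (decoration $0$, even); the odd order comes from the single cylinder sector, i.e.\ from the double edge (loop) between $\circ$ and $\oplus$ in the graph, not from any even decoration at a $\ominus$ vertex, and your rule as stated would not detect it.

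The correct dichotomy is therefore not at the level of individual decorations but at the level of how the boundary component of the ribbon graph travels: the paper argues that surgeries on $\oplus$ surfaces are performed at points whose cone angle lies in $2\pi\Z$, so a newborn zero framed by a chain of $\oplus$ surfaces and cylinders (local types $\circ2.2$, $+2.1$, $+2.2$, $+3.2a$, $+3.2b$, $+3.3$, $+4.2a$, $+4.3a$, $+4.4$) has total cone angle in $2\pi\Z$, hence even order; odd parity can only be produced by $\ominus$ vertices (whose singularities have cone angles in $\pi\Z$) or by the remaining local types ($\circ3.2$, $\circ4.2$, $+3.1$, $+4.1a$, $+4.1b$, $+4.2b$, $+4.2c$, $+4.3b$), which correspond to loops in the graph (cf.\ Figure~\ref{fig:creatingpole}) and are checked case by case to create one or two odd zeros each. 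The bound of four then follows as you intend, since the admissible graphs of Figure~\ref{fig:fig3MZ} contain at most two $\ominus$ vertices or two loops; but to make your argument work you would have to abandon the ``all $k_i$ odd at $\oplus$'' claim and replace it by this chain/cone-angle analysis (or by a corrected parity analysis in which cylinder sectors and the even decorations of types such as $+2.2$ are accounted for), which is essentially the paper's proof.
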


\begin{proof} Since the zeros on which we perform surgeries on surfaces of non trivial holonomy $\ominus$ are of any order (even or odd), it is easy to see that we can obtain any parity order for newborns zeros created by surfaces $\ominus$. 

This is not the case of surfaces $\oplus$. In fact, a newborn zero represented in the graph by a boundary of a ribbon graph which frames a chain of surfaces $\oplus$ (as in the picture below) surrounded by surfaces $\oplus$ or cylinders has always an even order. This is due to the fact that we perform surgeries such as creating a hole on surfaces of trivial holonomy, so on singularities of cone angle $2k\pi$. If we glue all these surfaces identifying all boundary singularities, then the new cone angle is also multiple of $2\pi$, so the newborn zero is of even order. Boundary types involved in these chains are $\circ 2.2$, $+2.1$, $+2.2$, $+3.2a$, $+3.2b$, $+3.3$, $+4.2a$, $+4.3a$, $+4.4$.
\begin{figure}[h!]
\centering
\includegraphics[scale=0.4]{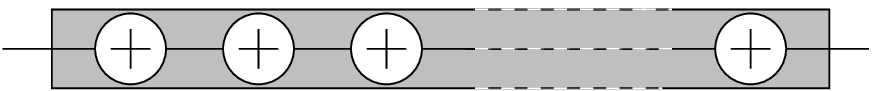}
\caption{Chain of surfaces $\oplus$}
\end{figure}

Then we just have to look at the remaining cases, namely, graphs containing surfaces of boundary type $\circ 3.2$ $\circ 4.2$, $+3.1$, $+4.1a$, $+4.1b$, $+4.2b$, $+4.2c$, $+4.3b$. Then one can see case by case that if the ribbon graph is locally as on the picture above, one or two odd zeros are created (one can replace the surface $\oplus$ by a cylinder $\circ$).
\begin{figure}[h!]
\centering
\includegraphics[scale=0.4]{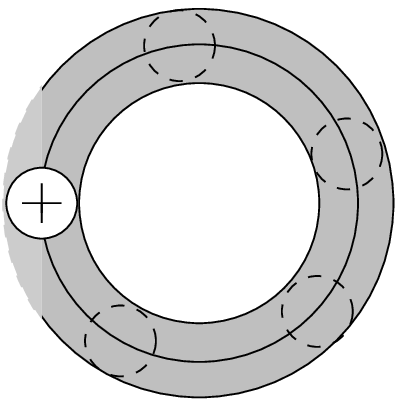}
\caption{Loop in the graph of configuration}
\end{figure}

As an example, Figure \ref{fig:creatingpole} represents how poles are created by loops in the graph of the configuration. \end{proof}

\begin{proof}[Proof of Proposition \ref{prop:nbermaxcylquad}]
This result is a corollary of the classification of configurations of \^homologous cylinders by Masur and Zorich (Figure \ref{fig:fig3MZ}). Each configuration is represented by a graph with one, two or three chains of surfaces $\oplus$ (with trivial linear holonomy) and cylinders $\circ$ (see also \S~\ref{ssect:graph} for more details about these graphs).
Then there are some remarks:
\begin{itemize}
\item A surface $\oplus$ of type $+2.1$ (cf Figure \ref{fig:fig6MZ}) in a chain is surrounded by at most two cylinders. In that case if there is no interior singularity it creates a newborn zero of order $4g=k_1+k_2+2$, where $g$ is the genus of the boundary strata $\cH\left(\frac{k_1+k_2-2}{2}\right)$ ($k_1$ and $k_2$ are odd).
\begin{figure}[h!]
\centering
\includegraphics[scale=1]{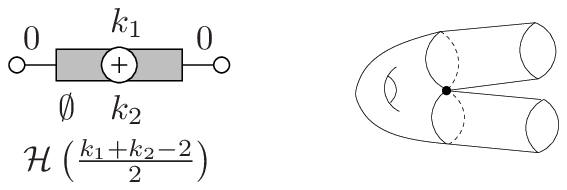}
\caption{Creating a zero of order $4g$}
\end{figure}
\item A surface $\oplus$ of type $+2.2$ in a chain is surrounded by at most two cylinders and in that case if there is no interior singularity it creates two newborn zeros of order $k_1$ and $k_2$ (even) with $k_1+k_2=4g$ where $g$ is the genus of the boundary strata $\cH\left(\frac{k_1-2}{2}, \frac{k_2-2}{2}\right)$.
\begin{figure}[h!]
\centering
\includegraphics[scale=1]{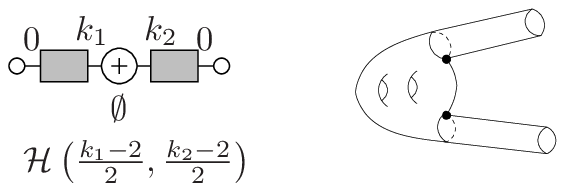}
\caption{Creating two zeros}
\end{figure}
\item By Lemma \ref{lem:odd:zeros}, at most four zeros of odd order can be realized as newborn zeros (created by loops in the graph of the configuration or by surfaces $\ominus$), the others are necessarily interior singularities (of surfaces $\ominus$).
\item Realizing zeros as newborn zeros instead of interior singularities increases the number of cylinders. 
\end{itemize}
First we assume that $2n+\sum_{i=1}^{p}b_i -k+4\geq 0$. One procedure to construct the configuration containing the most cylinders is the following: we consider all zeros of order $4l$ and realize them as newborn zeros with a surface of type $+2.1$ as described above. Then we consider the other even zeros and realize them by pairs as newborn zeros with surfaces of type $+2.2$ as described above. At this stage we obtain a chain of $m+\lfloor \frac{n}{2}\rfloor$ surfaces with a cylinder between each surface $\oplus$. We consider the remaining zeros (at most one even zero and all the odd zeros). If there are at least five odd zeros, we have to choose graph $a)$, $b)$ or $c)$ following notations of Figure \ref{fig:fig6MZ} to complete your configuration. If not, we can choose graph $c)$, $d)$ or $e)$. In all cases we will get at most $2$ additional cylinders, by looking carefully at all possible configurations depending on the number of odd/even zeros and poles.

In the general case, we have to choose carefully the even zeros that we realize as newborn zeros. Indeed all remaining zeros should be produced by another surface of non-negative genus in a boundary strata. This condition implies that we can choose to realize zeros of orders $4l_i$ or pairs of zeros $4k_{j_1}+2$, $4k_{j_2}+2$ with $i\in I$ and $j_1,j_2\in J$ while $4\sum_{i\in I}l_i+4\sum_{j\in J}k_j+2n+\sum_{k=1}^{p}b_k+4-k\geq 0$. This explains the general formula for the maximal number of cylinders.
\end{proof}

We are interested in the asymptotic geometry of configurations, in particular when the genus or the number of zeros tends to infinity, so we will consider $\tilde q_{max}(\alpha)=q_{max}(\alpha)-\epsilon_{\alpha}$ instead of $q_{max}(\alpha)$, to simplify the computations.

As a corollary of Proposition \ref{prop:nbermaxcylquad} we obtain that the strata maximizing the number of cylinders at genus fixed are the ones with the most even zeros:
\begin{corollary}Fix the genus $g\geq 1$ and the number of poles $k$. Denote $\Pi(4g-4+k)$ the set of partitions $\alpha$ of $4g-4+k$, and $l=\lfloor\frac{k}{4}\rfloor$. Then: 
$$\underset{\alpha\in\Pi(4g-4+k)}{\max} \tilde{q}_{max}(\alpha\cup\{-1^k\})=g+l-1$$ and the maximum is realized for $\alpha\in \Pi k' \sqcup \Pi_{4,2}(4g-4+4l)$, where $k=4l+k'$ and $\Pi_{4,2}(4g-4+4l)$ denotes the set of partitions of $4g-4+4l$ using only $4$ and $2$.
\end{corollary}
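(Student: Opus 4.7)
The plan is to deduce the corollary directly from Proposition \ref{prop:nbermaxcylquad}. In the notation of that proposition, for a partition $\alpha$ of $4g-4+k$ with $m$ zeros of order divisible by $4$ and $n$ zeros of order congruent to $2 \pmod 4$, one has $\tilde{q}_{max}(\alpha \cup \{-1^k\}) = m + \lfloor n/2 \rfloor$ under the proposition's feasibility hypothesis. Hence the task reduces to the purely combinatorial problem of maximizing $m + \lfloor n/2 \rfloor$ over all partitions of $4g-4+k$, and then checking that the maximum is actually realized by some feasible partition.

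For the upper bound I would use that each zero counted by $m$ has order at least $4$ and each zero counted by $n$ has order at least $2$, so the partition identity $4\sum l_i + 4 \sum k_j + 2n + \sum b_i = 4g-4+k$ forces $4m + 2n \le 4g-4+k$. Combining this with the elementary identity $m + \lfloor n/2 \rfloor = \lfloor (4m+2n)/4 \rfloor$ and writing $k = 4l + k'$ with $0 \le k' \le 3$,
\[m + \lfloor n/2 \rfloor \;=\; \left\lfloor \frac{4m+2n}{4} \right\rfloor \;\le\; \left\lfloor \frac{4g-4+k}{4} \right\rfloor \;=\; g + l - 1.\]

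For achievability I would exhibit the extremizer as $\alpha = \alpha_0 \sqcup \beta$ with $\alpha_0 \in \Pi(k')$ and $\beta \in \Pi_{4,2}(4g-4+4l)$: since all parts of $\alpha_0$ have size at most $3$, $\alpha_0$ contributes nothing to $m + \lfloor n/2 \rfloor$, while $\beta$ consisting of $m'$ fours and $n'$ twos satisfies $4m' + 2n' = 4g-4+4l$, so $m' + n'/2 = g + l - 1$ as soon as $n'$ is even. The main obstacle is to arrange the feasibility hypothesis $2n + \sum b_i - k + 4 \ge 0$ at this extremum: the pure choice $\beta = (4^{g+l-1})$ violates it once $l$ is large enough, but swapping two fours in $\beta$ for four twos preserves $m' + \lfloor n'/2 \rfloor$ while increasing $2n'$ by $8$, so finitely many such swaps produce a feasible extremizer still inside $\Pi(k') \sqcup \Pi_{4,2}(4g-4+4l)$. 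In the residual situations where the first formula of Proposition \ref{prop:nbermaxcylquad} does not apply at all, the same counting bound applied to the more general second formula still yields $g + l - 1$, completing both the identification of the maximum and the description of the extremal family.
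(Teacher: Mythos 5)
Your argument is correct and is essentially the paper's (implicit) one: the corollary is stated there as an immediate consequence of Proposition \ref{prop:nbermaxcylquad}, obtained exactly as you do by bounding $m+\lfloor n/2\rfloor=\lfloor (4m+2n)/4\rfloor\le\lfloor (4g-4+k)/4\rfloor=g+l-1$ and realizing the bound inside $\Pi(k')\sqcup\Pi_{4,2}(4g-4+4l)$. Your handling of the feasibility constraint (trading fours for twos, noting that the all-twos partition satisfies $2n'-k+4\ge 4g-k'>0$, and falling back on the second formula of the proposition with $I$ the full set of fours otherwise) fills in the details the paper leaves unstated, and the parity of $n'$ you need is automatic from $4m'+2n'=4g-4+4l$.
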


Recall that the mean area of a cylinder is given by $ \cfrac{1}{\dim_\C\cQ(\alpha)-1} $ (cf Theorem \ref{th:ccyl}), so $\cfrac{q_{max}(\alpha)}{\dim_{\C}(\cQ(\alpha))-1}$ represents the maximum mean total area of the cylinders in stratum $\cQ(\alpha)$. As another corollary of Proposition \ref{prop:nbermaxcylquad}, we obtain Proposition~\ref{prop:qmax:dim}.

\subsection{Configurations with simple surfaces} 

This section provides an answer in the quadratic case to the following question of Alex Eskin and Alex Wright: for a given stratum or a connected component of a stratum is it possible to find an admissible configuration
whose boundary surfaces are only tori ?

Lemma \ref{lem:odd:zeros} gives the main obstruction to solve this problem in the quadratic case: odd zeros are created by surfaces of non trivial holonomy $\ominus$ or by loops in graphs of configurations, and there are at most two surfaces of non trivial holonomy or two loops in a configuration. That means that a stratum with enough odd zeros will never have a configuration with only tori as boundary surfaces.

The second obstruction is that, as in the case of Abelian differentials, there is no way to have a decomposition into simple surfaces in hyperelliptic components of strata, since they are made from at most two surfaces and cylinders (cf \cite{B} and \S~\ref{sect:volhyp}).

Considering these two obstructions (odd zeros and hyperelliptic components), we can formulate the following result, which is very similar to the case of Abelian differentials \cite{BG}.

\begin{proposition}Let $\cQ^{comp}(\alpha_1, \dots, \alpha_s)$ be a connected component of a stratum of quadratic differentials, which is not hyperelliptic. If all the $\alpha_i$ are even then there exists a configuration in this component containing only tori and cylinders.
\end{proposition}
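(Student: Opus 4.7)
The plan is to construct an explicit configuration in $\cQ^{comp}(\alpha)$ whose boundary surfaces are all tori. The hypothesis that every $\alpha_i$ is even is essential: by Lemma \ref{lem:odd:zeros}, odd newborn zeros can only arise from surfaces of non-trivial holonomy $\ominus$ or from loops in the configuration graph, so in our setting we may restrict to loop-free graphs of type $a)$ from Figure \ref{fig:fig3MZ} whose vertices are $\oplus$ (tori, which carry no genuine interior zero by the Euler characteristic constraint $\sum\alpha_i^{int}=2g-2=0$) and $\circ$ (cylinders). In particular, all of $\alpha$ will have to be produced as newborn zeros, which is consistent with the parity hypothesis because newborn zeros emerging from chains of $\oplus$-surfaces are automatically of even order, as observed in the proof of Lemma \ref{lem:odd:zeros}.

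The concrete construction is a linear chain of $t$ tori alternating with $t-1$ cylinders, so that (by the collapse formula computing the genus of $S$ from the genera of the boundary surfaces plus the first Betti number of the graph) the resulting surface has genus $g=t$. At each internal torus the surgery is of local type $+2.2$, producing two newborn zeros, and at each endpoint torus the surgery is of local type $+2.1$, producing one. The orders of the newborn zeros are tuned by the choice of the integers $k_i$ at each surgery point along each connected component of the global ribbon graph boundary, via the formula $\pi\sum_i(k_i+1)$ for the cone angle. By a direct combinatorial matching, any even partition of $4g-4$ into $s$ parts can be realized---possibly after enlarging the chain and attaching pendant tori in order to increase the number $s$ of available newborn zeros while preserving $g$ by pairing each additional torus with a cylinder, mirroring the strategy used in the proof of Proposition \ref{prop:nbermaxcylquad}.

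Finally, one has to verify that the constructed configuration lies in the non-hyperelliptic component. By Lanneau's classification \cite{L2}, a stratum of quadratic differentials has at most two connected components, and when present the second one is hyperelliptic. The configurations belonging to hyperelliptic components are enumerated by Boissy \cite{B} and recalled in Figures \ref{fig:confighyp}--\ref{fig:confighypdeg}: each of them involves at most two boundary surfaces. Consequently, as soon as our construction uses at least three boundary tori (i.e. for $g\ge3$), the configuration automatically lies in the non-hyperelliptic component. The remaining cases $g\le 2$ can be handled by inspection of the short list of non-hyperelliptic strata of genus $\le 2$ whose zero orders are all even. The main obstacle will be the combinatorial bookkeeping needed to realize an \emph{arbitrary} even partition $\alpha$ via the chain-with-pendants construction: one has to match the local orders $k_i$ at every surgery globally, much as in the proof of Proposition \ref{prop:nbermaxcylquad}, and to verify that attaching pendant tori never forces a disallowed parity or creates an unwanted loop in the configuration graph.
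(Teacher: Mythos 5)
Your construction is not admissible in the Masur--Zorich framework, and the problem is visible already in your first step. You propose to ``restrict to loop-free graphs of type $a)$ whose vertices are $\oplus$ and $\circ$'', with a linear chain whose two end tori carry a surgery of local type $+2.1$. But in the classification of Figure \ref{fig:fig3MZ} the valence-one endpoints of a chain must be $\ominus$ vertices: the list of local ribbon graphs in Figure \ref{fig:fig6MZ} contains a valence-one type only for surfaces of non-trivial holonomy (type $-1.1$), and $+2.1$ is a valence-two type, so it cannot sit at a pendant vertex. Consequently a configuration whose boundary surfaces are all tori cannot be realized by a loop-free graph at all; it must use one of the graph types containing cycles, which is exactly what the paper does (the chain is closed up into a graph of type $e)$ with local type $+4.2a$, or $\circ 4.2$ for $\cQ(2,\dots,2)$). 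You have read Lemma \ref{lem:odd:zeros} backwards: it says that odd zeros can only come from $\ominus$ surfaces or loops, not that loops must be avoided when all $\alpha_i$ are even -- loops can produce even zeros, and here they are unavoidable. The same error recurs when you ``attach pendant tori''.

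The second, independent gap is the claim that the orders of the newborn zeros ``are tuned by the choice of the integers $k_i$'' so that ``any even partition of $4g-4$ can be realized by a direct combinatorial matching''. A boundary torus with no interior singularity contributes a fixed total of $4$ to the orders of the newborn zeros it touches: with a $+2.1$ surgery it creates one zero of order $4$, with $+2.2$ two zeros of orders $2+2$ (see the remarks in the proof of Proposition \ref{prop:nbermaxcylquad}). So per-torus tuning can never produce a zero of order $6, 8, 10,\dots$; to get an arbitrary even order one must let a single newborn zero collect cone angle along a boundary component of the global ribbon graph that traverses several consecutive tori and cylinders. This is precisely the content of the paper's three model chains creating zeros of orders $4k$, $4k+2$ and $4k+4$, which is the key mechanism missing from your argument. (Relatedly, your genus bookkeeping is off: since every torus contributes $4$ to $\sum\alpha_i=4g-4$, adding a ``pendant torus paired with a cylinder'' cannot preserve $g$, and a chain of $t$ tori does not give genus $t$ in general.) Your final component-check via the number of boundary surfaces is reasonable and close in spirit to the paper's remark that hyperelliptic components only admit configurations with at most two boundary surfaces, but it does not rescue the construction itself.
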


\begin{proof}Denote $n$ the number of zeros of order $4k+2$ and $m$ the number of zeros of order $4k$. As in the case of Abelian differentials, we just look at what type of zeros can be created by chains of tori and cylinders. We obtain the same type of zeros as in the case of Abelian differentials.

For the first type represented in the picture above, the cone angle around the singularity is also $2(2k+1)\pi$ so we obtain a zero of order $4k$.

\begin{figure}[h!]
\centering
\includegraphics[scale=1]{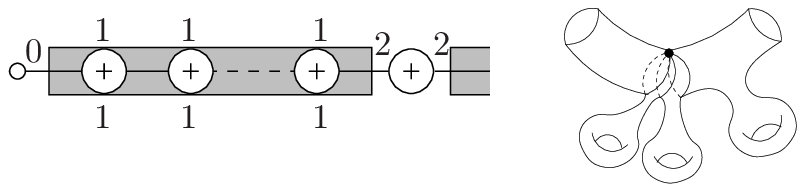}

\caption{Zero of the first type}
\end{figure}

Zeros of the second type represented above have order $4k+2$ since the cone angle is $(4k+4)\pi$.

\begin{figure}[h!]
\centering
\includegraphics[scale=1]{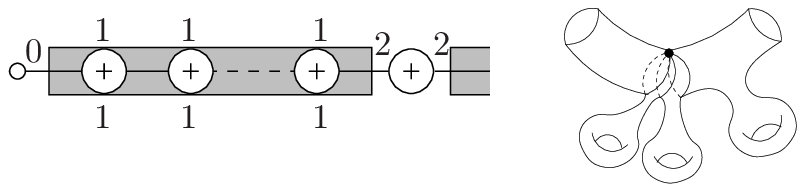}

\caption{Zero of the second type}
\end{figure}

Finally zeros of the third type are of order $4k+4$.

\begin{figure}[h!]
\centering
\includegraphics[scale=1]{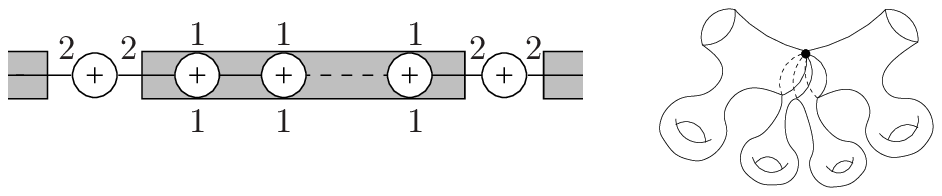}
\caption{Zero of the third type}

\end{figure}

With these chains we can easily construct a bigger chain that realizes all zeros. It remains to embed this chain in a graph of configuration. We can see that if there is at least two zeros of order greater than $4$, or if there is at least one zero of order greater than $8$, then we can embed this chain in the graph $e)$ with local ribbon graph of type $+4.2a$.

Since $\cQ(4)$ is empty, it remains only strata $\cQ(2,2,\dots, 2)$, which is realizable with a graph of type $e)$ and a local ribbon graph of type $\circ4.2$, by example.

\end{proof}


\bibliographystyle{amsalpha}

\providecommand{\bysame}{\leavevmode\hbox to3em{\hrulefill}\thinspace}
\providecommand{\MR}{\relax\ifhmode\unskip\space\fi MR }
\providecommand{\MRhref}[2]{%
  \href{http://www.ams.org/mathscinet-getitem?mr=#1}{#2}
}
\providecommand{\href}[2]{#2}
\begin{thebibliography}{}

\end{thebibliography}


\begin{thebibliography}{}



\bibitem[1]{AEZ}
J.S.~Athreya, A.~Eskin, A.~Zorich,
{\em  Right-Angled Billiards and Volumes of Moduli Spaces of Quadratic Differentials on $\C P^1$},
arXiv:1212.1660.
 	
\bibitem[2]{AEZ2}
J.S.~Athreya, A.~Eskin, A.~Zorich,
{\em  Counting Generalized Jenkins-Strebel Differentials},
{Geom. Dedicata}, {\bf 170} (2014), pp. 195--217.

\bibitem[3]{Bainbridge1}
M.~Bainbridge,
{\em Euler   Characteristics   of   Teichm\"uller  Curves  in  Genus  Two},
\textit{Geom. Topol.}, {\bf 11} (2007), pp. 1887--2073.

\bibitem[4]{Bainbridge2}
M.~Bainbridge,
{\em Billiards   in   L-shaped   tables   with   barriers},  \textit{GAFA},
{\bf 20:2} (2010), pp. 299--356; {\bf 20:5} (2010), pp. 1306.

\bibitem[5]{B}
C.~Boissy
{\em Configurations of saddle connections of quadratic
              differentials on {$\Bbb{CP}^1$} and on hyperelliptic
              {R}iemann surfaces}, {Comment. Math. Helv.}, {\bf 84:4} (2009), pp. 757--791.
              
\bibitem[6]{BG}
M.~Bauer, E.~Goujard, 
{\em Geometry of periodic regions on flat surfaces and associated Siegel--Veech constants}, {Geom. Ded.}, {\bf 174:1} (2015), pp 203--233.


\bibitem[7]{Bouw:Moeller}
I.~Bouw, M.~M\"oller,
{\em Teichm\"uller   curves,  triangle  groups,  and  Lyapunov  exponents},
\textit{Annals of Math.}, (2) {\bf 172:1} (2010), pp. 139--185.

\bibitem[8]{Chaika:Eskin}
J.~Chaika and A.~Eskin,
{\em Every  flat  surface  is  Birkhoff  and Osceledets generic in
almost every direction},
{JMD}, (2014).

\bibitem[9]{Chen:slope}
D.~Chen,
{\em Square-tiled surfaces and rigid curves on moduli spaces}, {Adv.
Math.}, {\bf 228:2} (2011), pp. 1135--1162.

\bibitem[10]{Chen:cyclic:covers}
D.~Chen,
{\em Covers of elliptic curves and the moduli space of stable curves},
{J.  Reine Angew. Math.}, \textbf{649} (2010),
pp. 167--205.

           
\bibitem[11]{CM}
D.~Chen, M.~M\"oller,
{\em Quadratic Differentials in Low Genus: Exceptional and Non-varying}
Annales scientifiques de l'ENS {\bf 47:2} (2014), pp. 309--369.

\bibitem[12]{DGZZ}
V.~Delecroix, E.~Goujard, P.~Zograf, A.~Zorich
{\em Square-tiled surfaces of fiwed combinatorial type: equidistribution, counting, volume of the ambient strata}, in preparation.


\bibitem[13]{Delecroix:Hubert:Lelievre}
V.~Delecroix, P.~Hubert, S.~Lelivre,
{\em Diffusion for the periodic wind-tree model},
Annales de l'ENS \textbf{47:6} (2014), pp. 1085--1110.
              
\bibitem[14]{EKZ}
A.~Eskin, M.~Kontsevich, A.~Zorich,
{\em Sum of Lyapunov exponents of the Hodge bundle with respect to the Teichm\"uller geodesic flow}, Publications math\'ematiques de l'IH\'ES, Springer Berlin Heidelberg {\bf 120:1} (2014), pp 207--333.

\bibitem[15]{EM}
A.~Eskin, H.~Masur, {\em Asymptotic formulas on flat surfaces}, { Ergodic Th. Dyn. Syst.} {\bf 21:2} (2001), pp. 443--478.


\bibitem[16]{EMZ}
A.~Eskin, H.~Masur, A.~Zorich,
{\em  Moduli  Spaces  of  Abelian  Differentials:  The  Principal
Boundary, Counting  Problems,  and  the Siegel--Veech Constants},
Publications de l'IHES, {\bf 97:1} (2003), pp. 61--179.

\bibitem[17]{EO}
A.~Eskin, A.~Okounkov,
{\em Asymptotics  of number of  branched coverings of a torus and
volumes   of   moduli   spaces  of  holomorphic   differentials},
Inventiones Mathematicae, {\bf 145:1} (2001), pp. 59--104.

\bibitem[18]{EO2}
A.~Eskin, A.~Okounkov,
{\em Pillowcases and quasimodular forms},
Algebraic Geometry and Number Theory,
Progress in Mathematics {\bf 253} (2006), pp 1--25.

\bibitem[19]{Eskin:Okounkov:Pandharipande}
A.~Eskin, A.~Okounkov, R.~Pandharipande,
{\em The theta characteristic of a branched covering}, {Adv. Math.},
{\bf 217} no. 3 (2008), pp. 873--888.

\bibitem[20]{Filip} S.~Filip,
{\em Splitting mixed Hodge structures over affine invariant manifolds},
\texttt{arXiv:1311.2350} (2013).


\bibitem[21]{Forni:Deviation}
G.~Forni,
{\em Deviation  of  ergodic averages for area-preserving flows on surfaces
of  higher  genus},  {Annals  of  Math.},  {\bf 155:1}  (2002),
pp. 1--103.

\bibitem[22]{Forni:Handbook}
G.~Forni,
{\em On  the Lyapunov exponents of the Kontsevich-Zorich cocycle}, Handbook
of   dynamical   systems.  Volume  1B.  Editors:  B.~Hasselblatt  and
A.~Katok. Elsevier, Amsterdam (2006), pp. 549--580.

\bibitem[23]{G}
E.~Goujard,
{\em Volumes of strata of  moduli spaces of quadratic differentials: getting explicit values}, {\tt arXiv:1501.01611}.

\bibitem[24]{JV}
D.~M.~Jackson, T.~I.~Visentin    
{\em An atlas of the smaller maps in orientable and nonorientable
              surfaces},
{CRC Press Series on Discrete Mathematics and its Applications}, Chapman \& Hall/CRC,Boca Raton, FL (2001).

\bibitem[25]{Kontsevich}
M.~Kontsevich,
{\em Lyapunov  exponents  and  Hodge  theory}. ``The mathematical beauty of
physics''  (Saclay,  1996),  (in Honor of C. Itzykson) pp. 318--332, Adv.
Ser. Math. Phys., 24, World Sci. Publishing, River Edge, NJ (1997).

\bibitem[26]{KZ}
M.~Kontsevich, A.~Zorich
{\em Connected components of the moduli spaces of {A}belian
              differentials with prescribed singularities}, Invent. Math.; {\bf 153:3} (2003), pp. 631--678.

\bibitem[27]{Korotkin:Zograf} D.~Korotkin, P.~Zograf,
{\em Tau function and moduli of differentials},
{Math. Res. Lett.}, {\bf18:3} (2011), pp. 447--458.

\bibitem[28]{L1}
E.~Lanneau,
{\em Hyperelliptic components of the moduli spaces of quadratic differentials with prescribed singularities},
    Commentarii Mathematici Helvetici {\bf 79:3} (2004), pp. 471--501.
    
\bibitem[29]{L2}
E.~Lanneau,
{\em Connected Components of the Strata of the Moduli Spaces of Quadratic Differentials}
Annales Scientifiques de l'ENS {\bf 41:1} (2008), pp. 1--56. 

\bibitem[30]{MS}
H.~Masur, J.~Smillie,
{\em Quadratic differentials with prescribed singularities and pseudo-
Anosov diffeomorphisms}, Comment. Math. Helvetici {\bf 68} (1993), pp. 289--307.

\bibitem[31]{MZ}
H.~Masur, A.~Zorich,
{\em Multiple Saddle Connections on Flat Surfaces ans the Principal Boundary of the Moduli Spaces of Quadratic Differentials},
Geom. Funct. Anal., {\bf 18:3} (2008), pp. 919--987.

\bibitem[32]{Mi}
M.~Mirzakhani, {\em Growth of the number of simple closed geodesics on hyperbolic surfaces}, Ann. Math., {\bf 168} (2008), pp. 97--125.

\bibitem[33]{Ve}
W.~Veech, {\em Siegel Measures}, Annals of Mathematics, {\bf 148} (1998), pp. 895–-944.

\bibitem[34]{Vo}
Ya.~Vorobets,
{\em Periodic geodesics of translation surfaces},
In: "Algebraic and Topological Dynamics",
 S. Kolyada, Yu. I. Manin and T. Ward (Editors), Contemporary Math., 
 Vol. 385, Amer. Math. Soc.,
 Providence, RI, (2005), pp. 205--258.

\bibitem[35]{Z}
A.~Zorich, 
{\em Square  tiled  surfaces  and Teichm\"uller volumes  of  the moduli   spaces   of  
Abelian differentials}, in collection "Rigidity in Dynamics and Geometry'', M.Burger, A.Iozzi (Editors), Springer Verlag (2002), pp. 459--471.

\bibitem[36]{Zorich:Asymptotic:flag}
A.~Zorich,
{\em Asymptotic Flag of an Orientable Measured Foliation on a Surface},
in ``Geometric Study of Foliations'',
World Scientific Pb. Co.,
(1994), pp. 479--498.

\bibitem[37]{Zorich:How:do}
A.~Zorich,
{\em How do the leaves of a closed 1-form wind around a surface},
``Pseudoperiodic Topology'',
V.~Arnold, M.~Kontsevich, A.~Zorich (eds.),
Translations of the AMS, Ser.2, vol. {\bf 197}, AMS, Providence, RI
(1999),
pp. 135--178.

\end{thebibliography}

\end{document}